\newtheorem{theorem}{Theorem}[section]
\newtheorem{lemma}[theorem]{Lemma}
\newtheorem{prop}[theorem]{Proposition}
\newtheorem{cor}[theorem]{Corollary}
\newtheorem{prob}[theorem]{Problem}
\theoremstyle{definition}
\newtheorem{definition}[theorem]{Definition}
\newtheorem{example}[theorem]{Example}
\newtheorem{remark}[theorem]{Remark}
\renewcommand{\subset}{\subseteq}
\renewcommand{\supset}{\supseteq}
\renewcommand{\epsilon}{\varepsilon}
\newcommand{\abs}[1]{\left|#1\right|}                   
\newcommand{\absf}[1]{|#1|}                             
\newcommand{\vnorm}[1]{\left\|#1\right\|}    
\newcommand{\vnormf}[1]{\|#1\|}                         
\newcommand{\vnormt}[1]{\left\|#1\right\|}    
\newcommand{\Z}{\mathbb{Z}}                             
\newcommand{\N}{\mathbb{N}}
\newcommand{\E}{\mathbb{E}}
\renewcommand{\d}{\mathrm{d}}
\renewcommand{\P}{\mathbb{P}}
\newcommand{\R}{\mathbb{R}}
\newcommand{\embolden}[1]{\textbf {#1}}
\newcommand{\snote}[1]{}
\newcommand{\redA}{\Sigma}
\newcommand{\redb}{\partial^{*}}
\newcommand{\sdimn}{n}
\newcommand{\adimn}{n+1}
\begin{document}

\title{Dimension-Free Noninteractive Simulation from Gaussian Sources}

\author{Steven Heilman}
\author{Alex Tarter}
\address{Department of Mathematics, University of Southern California, Los Angeles, CA 90089-2532}
\email{stevenmheilman@gmail.com}
\email{atarter@usc.edu}
\date{\today}
\thanks{S. H. is Supported by NSF Grant CCF 1911216}

\keywords{information theory, noninteractive simulation, correlated random variables, noise stability}

%
%

%

\begin{abstract}
Let $X$ and $Y$ be two real-valued random variables.  Let $(X_{1},Y_{1}),(X_{2},Y_{2}),\ldots$ be independent identically distributed copies of $(X,Y)$.  Suppose there are two players A and B.  Player A has access to $X_{1},X_{2},\ldots$ and player B has access to $Y_{1},Y_{2},\ldots$.  Without communication, what joint probability distributions can players A and B jointly simulate?  That is, if $k,m$ are fixed positive integers, what probability distributions on $\{1,\ldots,m\}^{2}$ are equal to the distribution of $(f(X_{1},\ldots,X_{k}),\,g(Y_{1},\ldots,Y_{k}))$ for some $f,g\colon\mathbb{R}^{k}\to\{1,\ldots,m\}$?

When $X$ and $Y$ are standard Gaussians with fixed correlation $\rho\in(-1,1)$, we show that the set of probability distributions that can be noninteractively simulated from $k$ Gaussian samples is the same for any $k\geq m^{2}$.  Previously, it was not even known if this number of samples $m^{2}$ would be finite or not, except when $m\leq 2$.

Consequently, a straightforward brute-force search deciding whether or not a probability distribution on $\{1,\ldots,m\}^{2}$ is within distance $0<\epsilon<|\rho|$ of being noninteractively simulated from $k$ correlated Gaussian samples has run time bounded by $(5/\epsilon)^{m(\log(\epsilon/2) / \log|\rho|)^{m^{2}}}$, improving a bound of Ghazi, Kamath and Raghavendra.

A nonlinear central limit theorem (i.e. invariance principle) of Mossel then generalizes this result to decide whether or not a probability distribution on $\{1,\ldots,m\}^{2}$ is within distance $0<\epsilon<|\rho|$ of being noninteractively simulated from $k$ samples of a given finite discrete distribution $(X,Y)$ in run time that does not depend on $k$, with constants that again improve a bound of Ghazi, Kamath and Raghavendra.


\end{abstract}

\maketitle
\setcounter{tocdepth}{1}
\tableofcontents

\section{Introduction}\label{secintro}

Let $(X,Y)\in\R\times\R$ be a random vector.  Let $(X_{1},Y_{1}),(X_{2},Y_{2}),\ldots$ be independent identically distributed (i.i.d.) copies of $(X,Y)$.  Suppose there are two players $A$ and $B$.  Player $A$ has access to $X_{1},X_{2},\ldots$ and player $B$ has access to $Y_{1},Y_{2},\ldots$.  Without communication, what joint distributions can players $A$ and $B$ jointly simulate? That is, what joint distributions can be noninteractively simulated by the two players? Put another way, if $k,m$ are fixed positive integers, what probability distributions on $\{1,\ldots,m\}^{2}$ can be written as the distribution of $(f(X_{1},\ldots,X_{k}),\,g(Y_{1},\ldots,Y_{k}))$, where $f,g\colon\mathbb{R}^{k}\to\{1,\ldots,m\}$?  Put another way, how can the ``correlation information'' of random samples from $(X,Y)$ be reformulated using functions of those random samples?

The statement of this noninteractive simulation problem was attributed to Slepian by \cite{wits75} without reference, perhaps as a reference to \cite{slepian73}.  As a preliminary example, note that if $X$ is independent of $Y$, then $(f(X_{1},\ldots,X_{k}),\,g(Y_{1},\ldots,Y_{k}))$ has a product distribution, so we cannot write any non-product distribution as $(f(X_{1},\ldots,X_{k}),\,g(Y_{1},\ldots,Y_{k}))$ in this case.  But if $X$ is not independent of $Y$, then it can be much harder to determine which distributions can or cannot be written as $(f(X_{1},\ldots,X_{k}),\,g(Y_{1},\ldots,Y_{k}))$ for any $k\geq2$, for some $f,g\colon\R^{k}\to\{1,\ldots,m\}$.  Recall also that if $W$ is a uniformly distributed random variable in $[0,1]$, then for any $m>0$, any probability distribution on $\{1,\ldots,m\}^{2}$ can be written as $h(W)$ using a function $h\colon[0,1]\to\{1,\ldots,m\}^{2}$ by defining $h$ so that $\P(h(W)=(i,j))$ is its specified value for all $1\leq i,j\leq m$.  That is, it is always possible to simulate correlated discrete random variables using shared randomness.  In noninteractive simulation, perfect shared randomness means that $X=Y$, i.e. $X$ and $Y$ are perfectly correlated.

The purpose of the noninteractive simulation problem is to limit the amount of correlated random variables that players $A$ and $B$ can sample, while any amount of independent randomness should be allowed to the players.  For this reason, the following formulation of the noninteractive simulation problem is considered morally equivalent to our first formulation: if $k,m$ are fixed positive integers, what probability distributions on $\{1,\ldots,m\}^{2}$ can be written as the distribution of $(f(X_{1},\ldots,X_{k},W),\,g(Y_{1},\ldots,Y_{k},Z))$, where $f,g\colon\mathbb{R}^{k+1}\to\{1,\ldots,m\}$?  Here $W,Z$ are uniform random variables on $[0,1]$ independent of each other and independent of $X_{1},\ldots,X_{k},Y_{1},\ldots,Y_{k}$, representing any private (independent) randomness that players A and B each have.

If we interpret the independent randomness of $W,Z$ as giving random choices among values of $f,g$, we can equivalently remove the $W,Z$ and just enlarge the range of $f,g$ from $\{1,\ldots,m\}$ to the simplex with $m$ vertices.  (Recall that any element of the simplex $\Delta_{m}\colonequals\{(z_{1},\ldots,z_{m})\in\mathbb{R}^{m}\colon z_{i}\geq0\,\,\forall\, 1\leq i\leq m,\,\sum_{i=1}^{m}z_{i}=1\}$ can be interpreted as a probability distribution on $\{1,\ldots,m\}$.  So, given $f\colon\R^{k}\to\Delta_{m}$, then we can associate to $f$ some $\widetilde{f}\colon\mathbb{R}^{k+1}\to\{1,\ldots,m\}$ such that for any $x_{1},\ldots,x_{k}\in\R^{k}$ $\widetilde{f}(x_{1},\ldots,x_{k},W)$ is a random variable on $\{1,\ldots,m\}$ whose distribution is equal to $f(x_{1},\ldots,x_{k})$, which is a distribution induced on the standard basis $\{e_{1},\ldots,e_{m}\}\subset\R^{m}$.  Conversely, if $\widetilde{f}\colon\R^{k+1}\to\{1,\ldots,m\}$ is given, then define $f\colon\R^{k}\to\Delta_{m}$ so that $f(x_{1},\ldots,x_{k})$ is equal to the distribution of $\widetilde{f}(x_{1},\ldots,x_{k},W)$ on $\{1,\ldots,m\}$.)

We therefore arrive at the formulation of the noninteractive simulation problem we will use most often below: if $k,m$ are fixed positive integers, what probability distributions on $\{1,\ldots,m\}^{2}$ can be written as the expected value of the matrix
$$[f(X_{1},\ldots,X_{k})_{i}g(Y_{1},\ldots,Y_{k})_{j}]_{1\leq i,j\leq m},$$
where $f,g\colon\mathbb{R}^{k}\to\Delta_{m}$?

Applications of the noninteractive simulation problem include: cryptography, design of error-correcting codes \cite{mossel04,yang07}, and design of autonomous agents \cite{kamath16}.  For example, an autonomous drone delivering a package might have to make decisions, using randomness, without consulting its dispatcher, due to a nonexistent cell-phone signal.

Generally speaking, noninteractive simulation asks how much ``correlation information'' between two random variables can be transferred to another pair of random variables.  Let $f,g\colon\mathbb{R}^{k}\to\Delta_{m}$ and let $\E$ denote the expected value of a random variable.  At one extreme, if $X,Y$ are independent, then $\E f(X_{1},\ldots,X_{k})_{i}g(Y_{1},\ldots,Y_{k})_{j}=\E f(X_{1},\ldots,X_{k})_{i}\E g(Y_{1},\ldots,Y_{k})_{j}$ for all $1\leq i,j\leq m$, i.e. one can only noninteractively simulate product distributions using a source of independent random variables $X,Y$.  At the other extreme, if $X=Y$, i.e. $X$ and $Y$ are perfectly correlated, and if there exist $x,x'\in\R$, $x\neq x'$ with $\P(X=x)>0,\P(X=x')>0$, then we can noninteractively simulate non-product distributions supported on two diagonal points.  For example, define $f,g\colon\R^{2}\to\Delta_{2}$ by
$$f(X_{1},X_{2})=g(X_{1},X_{2})\colonequals\Big(1_{\{X_{1}=x\}}\,,\quad 1_{\{X_{1}\neq x\}}\Big).$$
Then
$$\E f(X_{1},X_{2})_{i}g(Y_{1},Y_{2})_{j}=
\begin{cases}
\P(X_{1}=x) & ,\,\,\mbox{if}\,\, i=j=1\\
\P(X_{1}\neq x) & ,\,\,\mbox{if}\,\, i=j=2\\
0 & ,\,\,\mbox{otherwise}.
\end{cases}
$$
%
%
Evidently, this probability distribution on $\{1,2\}^{2}$ is not a product distribution.

In between these two extremes (independence of $X$ and $Y$ versus perfectly correlated $X=Y$), one would like to have some notion of the ``correlation amount'' of random variables, and then deduce what distributions can or cannot be noninteractively simulated from a specific distribution $(X,Y)$.

Various notions of ``correlation amount'' between two random variables (such as mutual information, common information, etc.) attempt to express the intrinsic amount of correlation that two random variables have.  Note that the covariance of two random variables is not an ``intrinsic'' notion of their ``amount of correlation'' since applying a function to the random variables (such as multiplying one of them by $-1$) might increase their covariance.  Apparently introduced to the subject by Witsenhausen \cite{wits75}, one useful notion of amount of correlation of real-valued random variables $X,Y$ is the \textbf{Hirschfeld-Gebelein-R\'{e}nyi maximal correlation} \cite{hirschfeld35,gebelein41,renyi59}, defined to be
\begin{equation}\label{hgrdef}
\rho_{M}(X,Y)\colonequals\sup_{\substack{\phi,\psi\colon\R\to\R\,\,\mathrm{measurable}\,\,\colon\\  \E \phi(X)=0,\, \E (\phi(X))^{2}=1,  \\ \E \psi(Y)=0,\,\E (\psi(Y))^{2}=1}} \mathbb{E} \phi(X)\psi(Y).
\end{equation}
(In the case that no function $\phi$ satisfies $\E \phi(X)=0$ and $\E (\phi(X))^{2}=1$, i.e. when $X$ is constant almost surely, we define $\rho_{M}(X,Y)$ to be zero, and similarly for $Y$.)  As shown in \cite{renyi59}, the supremum in the definition of $\rho_{M}(X,Y)$ might not be attained.  In particular, it could occur that $\rho_{M}(X,Y)=1$ while there do not exist $f,g\colon\R\to\R$ such that $f(X)=g(Y)$.  Nevertheless, in the noninteractive simulation problem, we will typically assume that $\rho_{M}(X,Y)<1$.

It is observed e.g. in \cite{kamath16} that noninteractive simulation of $(U,V)$ from $(X,Y)$ is only possible when
$$\rho_{M}(X,Y)\geq\rho_{M}(U,V).$$
That is, noninteractive simulation cannot increase the Hirschfeld-Gebelein-R\'{e}nyi maximal correlation.  For a formal statement of this fact, see Definition \ref{nsimdef} and Proposition \ref{prop0} below.  If $X,Y$ are $A$-valued random variables, recall that their total variation distance is defined to be
$$d_{\mathrm{TV}}(X,Y)\colonequals\sup_{D\subset A}\abs{\P(X\in D)-\P(Y\in D)}.$$
\begin{definition}\label{nsimdef}
Let $(X,Y)$ and $(U,V)$ be two pairs of real-valued random variables.  We say that $(U,V)$ can be \textbf{noninteractively simulated} from an infinite number of samples of $(X,Y)$ if the following condition holds.  Let $(X_{1},Y_{1}),(X_{2},Y_{2}),\ldots$ be i.i.d. copies of $(X,Y)$.  For any positive integer $n$, there exist positive integers $k_{n},m_{n}$, there exist $f_{n},g_{n}\colon \R^{k_{n}}\times\{1,\ldots,m_{n}\}\to\R$ such that, if $W_{n},Z_{n}$ are uniformly distributed in $\{1,\ldots,m_{n}\}$, independent of each other, and independent of $(X_{1},Y_{1}),\ldots,(X_{k_{n}},Y_{k_{n}})$, and if
$$U_{n}\colonequals f_{n}(X_{1},\ldots,X_{k_{n}},W_{n}),\qquad V_{n}\colonequals g_{n}(Y_{1},\ldots,Y_{k_{n}},Z_{n}),\qquad\forall\,n\geq1.$$
Then
$$\lim_{n\to\infty}d_{\mathrm{TV}}((U,V),(U_{n},V_{n}))=0.$$
\end{definition}

\begin{prop}[{\cite[Observation 1]{kamath16}}]\label{prop0}
If $(U,V)$ can be noninteractively simulated from an infinite number of samples of $(X,Y)$, then
$$\rho_{M}(X,Y)\geq\rho_{M}(U,V).$$
\end{prop}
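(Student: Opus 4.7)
The plan is to combine three standard properties of the Hirschfeld-Gebelein-R\'{e}nyi maximal correlation with a limiting argument based on total variation convergence.

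First, I would establish that $\rho_M(U_n,V_n) \leq \rho_M(X,Y)$ for every $n$. This rests on three ingredients. (i) \emph{Data processing}: if $U = \alpha(X')$ and $V = \beta(Y')$ are measurable functions, then any feasible $(\phi,\psi)$ for $(U,V)$ yields the feasible pair $(\phi\circ\alpha,\psi\circ\beta)$ for $(X',Y')$ with the same inner product, so $\rho_M(U,V) \leq \rho_M(X',Y')$. (ii) \emph{Tensorization}: for i.i.d.\ copies, $\rho_M((X_1,\ldots,X_{k_n}),(Y_1,\ldots,Y_{k_n})) = \rho_M(X,Y)$; the direction $\geq$ is immediate from (i) with coordinate projections, and the direction $\leq$ follows from the well-known fact (due to Witsenhausen) that $\rho_M$ tensorizes for product distributions, which one proves by decomposing $L^2$ functions of the $k_n$-tuple into orthogonal pieces and estimating each piece. (iii) \emph{Independent auxiliary randomness}: since $W_n$ and $Z_n$ are independent of each other and of the $X_i$'s and $Y_i$'s, the joint distribution of $((X_1,\ldots,X_{k_n},W_n),(Y_1,\ldots,Y_{k_n},Z_n))$ is the product of the distribution of the first $k_n$ coordinates with an independent pair, so by tensorization its maximal correlation is still $\rho_M(X,Y)$. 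Combining (i)--(iii) with $U_n = f_n(X_1,\ldots,X_{k_n},W_n)$, $V_n = g_n(Y_1,\ldots,Y_{k_n},Z_n)$ gives $\rho_M(U_n,V_n) \leq \rho_M(X,Y)$.

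Second, I would show $\rho_M(U,V) \leq \liminf_{n\to\infty}\rho_M(U_n,V_n)$, i.e.\ lower semicontinuity under total variation convergence. Fix $\epsilon>0$ and choose measurable $\phi,\psi\colon\R\to\R$ with $\E\phi(U)=\E\psi(V)=0$, $\E\phi(U)^2=\E\psi(V)^2=1$, and $\E\phi(U)\psi(V)\geq\rho_M(U,V)-\epsilon$. Truncating $\phi$ and $\psi$ at level $M$ and then re-centering and renormalizing produces \emph{bounded} functions with the same normalization, and as $M\to\infty$ the inner product converges in $L^2$ to $\E\phi(U)\psi(V)$; so we may assume $\phi,\psi$ are bounded. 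For bounded measurable $\phi,\psi$, total variation convergence $d_{\mathrm{TV}}((U_n,V_n),(U,V))\to0$ implies $\E\phi(U_n)\to\E\phi(U)$, $\E\phi(U_n)^2\to\E\phi(U)^2$, and $\E\phi(U_n)\psi(V_n)\to\E\phi(U)\psi(V)$, and likewise for $\psi$. Setting $\phi_n = (\phi-\E\phi(U_n))/\sqrt{\mathrm{Var}(\phi(U_n))}$ and similarly $\psi_n$ yields admissible test functions for $(U_n,V_n)$ with $\E\phi_n(U_n)\psi_n(V_n)\to\E\phi(U)\psi(V)$, so $\liminf_n\rho_M(U_n,V_n)\geq\rho_M(U,V)-\epsilon$. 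Letting $\epsilon\to0$ and combining with the first step gives $\rho_M(U,V)\leq\rho_M(X,Y)$.

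The main obstacle is the limiting step: the supremum in the definition of $\rho_M$ need not be attained, and maximal correlation is only lower semicontinuous (not continuous) under convergence in distribution, so the reduction to bounded test functions via truncation-and-renormalization is the delicate point. The tensorization step (ii) is the other nontrivial ingredient, but it is a classical result that can be quoted. Everything else is a bookkeeping application of the variational definition \eqref{hgrdef}.
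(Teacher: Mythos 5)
The paper does not provide its own proof of this proposition; it is cited directly from \cite{kamath16} as Observation 1. So there is nothing in the paper itself to compare against. That said, your argument is correct and is essentially the standard proof (and, to my recollection, the one in \cite{kamath16}): the inequality $\rho_M(U_n,V_n)\le\rho_M(X,Y)$ for each $n$ follows from data processing, Witsenhausen's tensorization, and the fact that appending independent private randomness does not increase maximal correlation; then the conclusion follows from lower semicontinuity of $\rho_M$ under total variation convergence.

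Two small points worth cleaning up if you write this out in full. First, the phrase ``the inner product converges in $L^2$'' in the truncation step is loose; what you mean is that $\phi 1_{\{|\phi|\le M\}}\to\phi$ in $L^2(\mathrm{law}(U))$ and similarly for $\psi$, and hence, after recentering and renormalizing, the quantity $\E\widetilde{\phi}_M(U)\widetilde{\psi}_M(V)$ converges (as a sequence of real numbers) to $\E\phi(U)\psi(V)$ by Cauchy--Schwarz; the normalizing denominators $\sqrt{\mathrm{Var}(\phi_M(U))}\to 1$ stay bounded away from $0$ for $M$ large. Second, in the $\liminf$ step you should note explicitly that $\mathrm{Var}(\phi(U_n))\to\mathrm{Var}(\phi(U))=1>0$ (which follows from $|\E h(U_n)-\E h(U)|\le 2\vnormf{h}_\infty\, d_{\mathrm{TV}}(U_n,U)\le 2\vnormf{h}_\infty\, d_{\mathrm{TV}}((U_n,V_n),(U,V))$ for bounded $h$), so that the renormalized $\phi_n,\psi_n$ are well-defined and admissible for $n$ large. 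With those clarifications the argument is complete and correct.
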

(See also \cite[Observation 2]{kamath16} for an analogue of Proposition \ref{prop0} with hypercontractivity constants in place of the Hirschfeld-Gebelein-R\'{e}nyi maximal correlation.)

Our main problem of interest is the following.

\begin{prob}[\embolden{Noninteractive Simulation Problem}]\label{proborig}
Determine which real-valued pair of random variables $(U,V)$ can be noninteractively simulated from an infinite number of samples of a given pair of real-valued random variables $(X,Y)$.
\end{prob}

Despite results such as Proposition \ref{prop0}, the noninteractive simulation problem can be difficult to solve even in very simple cases \cite{gacs73,wyner75,wits75,kamath16,ghazi16,yu21}.  For example, if $(X,Y)$ is uniform on the set of three points $\{(0,0),(0,1),(1,0)\}\subset\R^{2}$, then it is an open problem if a pair $(U,V)$ of uniform $\{-1,1\}$-valued random variables with correlation $.49$ can be noninteractively simulated by sampling from $(X,Y)$ \cite{kamath16,ghazi18}.  (Note that $\rho_{M}(U,V)=.49$ while $\rho_{M}(X,Y)=1/2$, using $\E X=\E Y=1/3$ and $\E (\sqrt{9/2}(X-1/3))^{2}=1$, $\E (\sqrt{9/2}(X-1/3))(\sqrt{9/2}(Y-1/3))=1/2$, so Proposition \ref{prop0} does not exclude the possibility that $(X,Y)$ can noninteractively simulate $(U,V)$.)

The case that $(X,Y)\in\R\times\R$ is a $\rho$-correlated standard Gaussian for some $-1<\rho<1$ has been studied extensively \cite{de17,de18,ghazi18}, and we will focus on this case of the noninteractive simulation problem.  (That is, $\E X=\E Y=0$, $\E X^{2}=\E Y^{2}=1$ and $\E XY=\rho\in(-1,1)$.)

When $(X,Y)$ are $\rho$-correlated standard Gaussians, it follows from Hermite-Fourier analysis that $\rho_{M}(X,Y)=\abs{\rho}$.  So, if $(U,V)$ is a pair of real-valued random variables such that $\abs{\rho}<\rho_{M}(U,V)\leq 1$, then $(U,V)$ cannot be noninteractively simulated from $(X,Y)$ by Proposition \ref{prop0}.  This fact gives many examples of $(U,V)$ that cannot be noninteractively simulated from $(X,Y)$, but it still does not indicate exactly what distributions can be noninteractively simulated by correlated Gaussians.  In principle, one could find what $(U,V)$ could be noninteractively simulated from correlated Gaussians using the algorithms of \cite{de17,de18,ghazi18}, but such algorithms seem quite inefficient.  On the other hand, the main result of the current paper improves on the efficiency (i.e. run time) of those algorithms.

A basic question considered, but not resolved, in \cite{de17,de18,ghazi18} is: if the players want to simulate a correlated distribution on $\{1,\ldots,m\}^{2}$ where $m>0$ is a fixed integer, is there a certain number $k$ of samples $(X_{1},Y_{1}),\ldots,(X_{k},Y_{k})$ such that, taking more than $k$ samples does not change the set of distributions that can be noninteractively simulated from $(X,Y)$?  Put another way, does the ``expressive power'' of sampling from correlated Gaussians strictly increase with the number $k$ of samples used in the noninteractive simulation problem?

The case $m=2$ of this problem is well understood \cite{de18}.  If we fix $0<a,b<1$, and we only consider $f,g\colon\R\to\Delta_{2}$ such that $\E f(X)=(a,1-a)$ and $\E g(Y)=(b,1-b)$, then the matrix $(\E f(X)_{i}g(Y)_{j})_{1\leq i,j\leq 2}$ has only one free parameter since $f_{2}=1-f_{1}$ and $g_{2}=1-g_{1}$.  Let $\Phi(t)\colonequals\int_{-\infty}^{t}e^{-s^{2}/2}ds/\sqrt{2\pi}$ for all $t\in\R$.  From Borell's Inequality \cite{borell85}, for any $0<\rho<1$ we have
$$\E 1_{-[-\infty,\Phi^{-1}(a))}(X)1_{[-\infty,\Phi^{-1}(b))}(Y)\leq \E f(X)_{1}g(Y)_{1}\leq  \E 1_{[-\infty,\Phi^{-1}(a))}(X)1_{[-\infty,\Phi^{-1}(b))}(Y).$$
So, the set of all probability distributions on $\{1,2\}^{2}$ that can be noninteractively simulated from a pair $(X,Y)$ of standard $\rho$-correlated Gaussians can be written as the set of matrices of the form
$$\begin{pmatrix} c & a-c\\ b-c & 1-a-b+c\end{pmatrix},$$
where and $0\leq a,b\leq1$ are arbitrary and
$$\E 1_{-[-\infty,\Phi^{-1}(a))}(X)1_{[-\infty,\Phi^{-1}(b))}(Y)\leq  c\leq  \E 1_{[-\infty,\Phi^{-1}(a))}(X)1_{[-\infty,\Phi^{-1}(b))}(Y).$$
Moreover, one sample from $(X,Y)$ suffices to simulate such a distribution.  Similarly, when $-1<\rho<0$, Borell's inequality reverses compared to before:
$$\E 1_{-[-\infty,\Phi^{-1}(a))}(X)1_{[-\infty,\Phi^{-1}(b))}(Y)\geq \E f(X)_{1}g(Y)_{1}\geq  \E 1_{[-\infty,\Phi^{-1}(a))}(X)1_{[-\infty,\Phi^{-1}(b))}(Y).$$
So, the set of all probability distributions on $\{1,2\}^{2}$ that can be noninteractively simulated from a pair $(X,Y)$ of standard $\rho$-correlated Gaussians can be written as the set of matrices of the form
$$\begin{pmatrix} c & a-c\\ b-c & 1-a-b+c\end{pmatrix},$$
where and $0\leq a,b\leq1$ are arbitrary and
$$\E 1_{-[-\infty,\Phi^{-1}(a))}(X)1_{[-\infty,\Phi^{-1}(b))}(Y)\geq  c\geq  \E 1_{[-\infty,\Phi^{-1}(a))}(X)1_{[-\infty,\Phi^{-1}(b))}(Y).$$

Unfortunately, determining which probability distributions on $\{1,\ldots,m\}^{2}$ can be noninteractively simulated from correlated Gaussians is much more difficult when $m>2$.  In particular, it was not known whether a finite or an infinite number of samples was needed to noninteractively simulate anything that could be noninteractively simulated from correlated Gaussians.  When $X,Y$ are standard Gaussians with fixed correlation $\rho\in(-1,1)$, it was shown in \cite{de17,de18} and \cite{ghazi18} that a large number of samples closely approximates the noninteractive simulation ability of an infinite number of samples.  Moreover, the number of samples required to decide whether or not a target distribution is $\epsilon$-close to being noninteractively simulatable from $(X,Y)$ is bounded by $\exp(\mathrm{poly}(m,\frac{1}{1-\rho},\epsilon))$ \cite[Theorem 1.4]{ghazi18}.  We cannot find a run time bound in \cite{ghazi18} for the associated algorithm, but presumably it would be at least as large as $\exp\exp(\mathrm{poly}(m,\frac{1}{1-\rho},\frac{1}{\epsilon}))$.

In this work, we show that the set of distributions on $\{1,\ldots,m\}^{2}$ that can be noninteractively simulated from $k$ Gaussian samples is the same for any $k\geq m^{2}-1$.  That is, $m^{2}-1$ samples suffices to exhaust the noninteractive simulation ability of $\rho$-correlated Gaussians (see Corollary \ref{cor1}).  As a corollary, the run time required to decide whether or not a target distribution is $\epsilon$-close to being noninteractively simulatable from $(X,Y)$ is at most $(1+4/\epsilon)^{m(\log(\epsilon/2)/\log\abs{\rho})^{m^{2}-1}}$, as shown in Section \ref{nisgausalg}.

(In order to obtain the statement in the abstract for $m^{2}$ samples in place of $m^{2}-1$, we use one additional sample to randomize the outcome, i.e. to express $f,g\colon\R^{k}\to\Delta_{m}$ as an average of $\widetilde{f},\widetilde{g}\colon\R^{k}\to\{e_{1},\ldots,e_{m}\}$.)

As shown in \cite{de18,ghazi18}, the case that $(X,Y)$ are correlated Gaussians implies a similar result for distributions $(X,Y)\in\{1,\ldots,p\}^{2}$ for arbitrary $p\geq1$.  A nonlinear central limit theorem (i.e. invariance principle) implies that the noninteractive simulation problem can be solved for arbitrary finite discrete sources with a run time that does not depend on the dimension $k$.  The run time required to decide whether or not a target distribution is $\epsilon$-close to being noninteractively simulatable from $(X,Y)$ is at most
$$(5/\epsilon)^{mp^{(100\cdot 2^{m}/\epsilon)^{\frac{3600m\log(m/\epsilon)\log(1/\alpha)}{(1-\rho)\epsilon}}}},$$
where $\rho<1$ is the Herschfeld-Gebelein-R\'{e}nyi  maximal entropy of $(X,Y)$, and $\alpha$ is the smallest nonzero probability that $(X,Y)$ takes a particular value.  For the formal statement, see Theorem \ref{mainthm3}.  In contrast, the run time of the algorithm of \cite[Theorem 1.6]{ghazi18} is implicitly stated as $\exp\exp\exp(\mathrm{poly}(m,\frac{1}{1-\rho},\frac{1}{\epsilon},\log\frac{1}{\alpha}))$, though it is mentioned that this bound could be made explicit.

\subsection{More Detailed Introduction}

Unless otherwise stated, all functions and sets considered in this paper are measurable.  Define
$$\langle x,y\rangle\colonequals\sum_{i=1}^{\adimn}x_{i}y_{i},\qquad\forall\,x=(x_{1},\ldots,x_{\adimn}),y=(y_{1},\ldots,y_{\adimn})\in\R^{\adimn}.$$
$$\vnorm{x}\colonequals\langle x,x\rangle^{1/2},\qquad\forall\,x\in\R^{\adimn}.$$
$$\gamma_{\adimn}(x)\colonequals (2\pi)^{-(\adimn)/2}\exp(-\vnorm{x}^{2}/2),\qquad\forall\,x\in\R^{\adimn}.$$

\begin{equation}\label{deltadef}
\Delta_{m}\colonequals\{a=(a_{1},\ldots,a_{m})\in\R^{m}\colon\sum_{i=1}^{m}a_{i}=1,\,\forall\,1\leq i\leq m,\,a_{i}\geq0\}.
\end{equation}
For any $f\colon\R^{\adimn}\to\Delta_{m}$, we denote the components of $f$ as $f=(f_{1},\ldots,f_{m})$, so that $f_{i}(x)=\langle f(x),e_{i}\rangle$, $\forall$ $x\in\R^{\adimn}$ and $\forall$ $1\leq i\leq m$, where $e_{i}\in\R^{\adimn}$ is a vector with a $1$ in its $i^{th}$ entry and zeros in its other entries.

Let $f\colon\R^{\adimn}\to[0,1]$ be measurable and let $\rho\in(-1,1)$.  Define the \textbf{Ornstein-Uhlenbeck operator with correlation $\rho$} applied to $f$ by
\begin{equation}\label{oudef}
\begin{aligned}
T_{\rho}f(x)
&\colonequals\int_{\R^{\adimn}}f(x\rho+y\sqrt{1-\rho^{2}})\gamma_{\adimn}(y)\,\d y\\
&=(1-\rho^{2})^{-(\adimn)/2}(2\pi)^{-(\adimn)/2}\int_{\R^{\adimn}}f(y)e^{-\frac{\vnorm{y-\rho x}^{2}}{2(1-\rho^{2})}}\,\d y,
\qquad\forall x\in\R^{\adimn}.
\end{aligned}
\end{equation}
If $X$ is a real-valued standard Gaussian random variable, it is well-known that $(X,T_{\rho}(X))$ is a pair of $\rho$-correlated standard Gaussians, and we will use this fact freely throughout the paper.  The \textbf{noise stability} of $\Omega\subset\R^{\adimn}$ with correlation $\rho\in(-1,1)$ is $\P((X,Y)\in\Omega\times\Omega)=\int_{\R^{\adimn}}1_{\Omega}(x)T_{\rho}1_{\Omega}(x)\gamma_{\adimn}(x)\,\d x$.

For any measurable $f,g\colon\R^{\adimn}\to\Delta_{m}$, define $C_{\rho}(f,g)\in\R^{m\times m}$ such that
\begin{equation}\label{crhodef}
C_{\rho}(f,g)\colonequals\Big(\int_{\R^{\adimn}}f_{i}(x)T_{\rho}g_{j}(x)\gamma_{\adimn}(x)\d x\Big)_{1\leq i,j\leq m}.
\end{equation}

\begin{definition}[\embolden{Simulatable Probability Distribution Matrices}]
Let $\rho\in(-1,1)$.  Let $m\geq3$.  Define the set of discrete probability distributions on $\{1,\ldots,m\}^{2}$ from $\rho$-correlated Gaussian sources of dimension $\adimn$ to be
\begin{equation}\label{smdef}
\mathcal{S}_{\rho,m}(\adimn)
\colonequals\Big\{C_{\rho}(f,g)\in\R^{m\times m}\colon f,g\colon\R^{\adimn}\to\Delta_{m}\Big\}.
\end{equation}
\end{definition}

The following special case of Problem \ref{proborig} asks if a finite number of samples from correlated Gaussians suffices to express the set of probability distributions on $\{1,\ldots,m\}^{2}$ that can be noninteractively simulated from an arbitrary number of samples.

\begin{prob}[\embolden{Non-Interactive Simulation, Gaussian Sources}, {\cite{de18,ghazi18}}]\label{prob1}
Let $m\geq3$.  Fix $-1<\rho<1$.  How closely does $\mathcal{S}_{\rho,m}(\adimn)$ approximate the closure of $\cup_{j=1}^{\infty}\mathcal{S}_{\rho,m}(j)$?  In particular, is it true that
$$\mathcal{S}_{\rho,m}(\adimn)=\overline{\cup_{j=1}^{\infty}\mathcal{S}_{\rho,m}(j)}$$
for sufficiently large $\sdimn$?
\end{prob}

Below, we refer to the boundary of $\mathcal{S}_{\rho,m}(\adimn)$ as the $(m^{2}-2)$-dimensional boundary of $\mathcal{S}_{\rho,m}(\adimn)$, noting that $\mathcal{S}_{\rho,m}(\adimn)$ is an $(m^{2}-1)$-dimensional set contained in $\{(M_{ij})_{1\leq i,j\leq m}\subset\R^{m\times m}\colon \sum_{i,j=1}^{m}M_{ij}=1,\, M_{ij}\geq0,\,\,\forall\,1\leq i,j\leq m\}$.

\begin{definition}[\embolden{Simulatable Probability Distribution Matrices, Extreme Points}]
Let $\rho\in(-1,1)$.  Let $m\geq3$.  Fix $a,b\in\Delta_{m}$.  Define the boundary set of extreme points of probability distributions on $\{1,\ldots,m\}^{2}$ from $\rho$-correlated Gaussian sources of dimension $\adimn$ to be
\begin{equation}\label{smextdef}
\mathrm{Ext}\mathcal{S}_{\rho,m}(\adimn)
\colonequals\Big\{C_{\rho}(f,g)\in\partial \mathcal{S}_{\rho,m}(\adimn)\colon f,g\colon\R^{\adimn}\to\{e_{1},\ldots,e_{m}\}\subset\Delta_{m}\Big\}.
\end{equation}
\end{definition}
This set is nonempty since for any $1\leq i\leq m$, $f=g=e_{i}$ satisfies $C_{\rho}(f,g)\in\partial \mathcal{S}_{\rho,m}(\adimn)$, since $C_{\rho}(f,g)$ is a matrix of zeros with a single $1$ entry.

Since $\mathcal{S}_{\rho,m}(\adimn)$ is the bilinear image of a convex set, we might say that the set $\mathcal{S}_{\rho,m}(\adimn)$ is a biconvex set \cite{aumann86,gorski07}.  Since $\mathcal{S}_{\rho,m}(\adimn)$ is not convex, we cannot characterize its boundary points as the extrema of linear functionals.  (One can show that extrema of linear functions on $\mathcal{S}_{\rho,m}(\adimn)$ are the same for all $\adimn\geq m^{2}$, but we will not explicitly show this since it is insufficient to solve Problem \ref{prob1}.)

Still, since $\mathcal{S}_{\rho,m}(\adimn)$ is the bilinear image of a convex set, we can characterize its boundary as the local minima of quadratic functions.  We will then show that minima of quadratic functions of noise stability are low-dimensional.  We therefore arrive at the following problem.

\begin{prob}[\embolden{Quadratic Minimization of Bilinear Noise Stability}]\label{prob2}
Let $m\geq3$.  Fix $-1<\rho<1$.  Fix $D,Z\in\R^{m\times m}$ such that $d_{ij}>0$ for all $1\leq i,j\leq m$.  Find measurable sets $\Omega_{1},\ldots\Omega_{m},\Omega_{1}',\ldots\Omega_{m}'\subset\R^{\adimn}$ with $\cup_{i=1}^{m}\Omega_{i}=\cup_{i=1}^{m}\Omega_{i}'=\R^{\adimn}$ that minimize
$$\sum_{i=1}^{m}\sum_{j=1}^{m}d_{ij}\Big(\int_{\R^{\adimn}}1_{\Omega_{i}}(x)T_{\rho}1_{\Omega_{j}'}(x)\gamma_{\adimn}(x)\,\d x - z_{ij}\Big)^{2},$$
\end{prob}

Certain choices of $D,Z$ will lead to a trivial minimum in Problem \ref{prob2}.  So, we will need to make some extra assumptions on $Z$ in order to get a nontrivial minimum in Problem \ref{prob2}.  We will return to this point later in Lemma \ref{reglem}, since the considerations are a bit technical.  The issue here is analogous to a similar issue for linear functions of the noise stability.  For example, since $\sum_{i,j=1}^{m}\int_{\R^{\adimn}}1_{\Omega_{i}}(x)T_{\rho}1_{\Omega_{j}'}(x)\gamma_{\adimn}(x)\,\d x=1$ always holds, if we view the quantity $\sum_{i,j=1}^{m}\int_{\R^{\adimn}}1_{\Omega_{i}}(x)T_{\rho}1_{\Omega_{j}'}(x)\gamma_{\adimn}(x)\,\d x$ as a linear function of pairs of arbitrary partitions, then all partitions maximize this linear function.  But any other linear function of noise stability of the form \eqref{linfcn} (with $(d_{ij})_{1\leq i,j\leq m}$ not all equal) will have nontrivial maximizers.  Put another way, there is one particular linear function of noise stability whose optima do not tell us anything interesting.  Analogously, there are some quadratic functions of noise stability whose optima do not tell us anything interesting.

\subsection{Previous Related Work}

In \cite{heilman20d}, it was shown that linear functions of noise stability of two partitions of the form
$$\sum_{i=1}^{m}\int_{\R^{\adimn}}1_{\Omega_{i}}(x)T_{\rho}1_{\Omega_{i}'}(x)\gamma_{\adimn}(x)\,\d x$$
have maximizers that are low dimensional (i.e. there exist optimal sets of the form: some sets in $\R^{2m-2}$ crossed with $\R^{\sdimn-2m+3}$), subject to a volume constraint that $\gamma_{\adimn}(\Omega_{i})=a_{i}$, $\gamma_{\adimn}(\Omega_{i}')=b_{i}$ for all $1\leq i\leq m$, where $a,b\in\Delta_{m}$ are fixed.  This result resolved an open problem from \cite{de17,de18,ghazi18}.  One could call such a result a ``dimension reduction.''  The same dimension reduction result could also apply to maximizers of more general linear functions of the form
\begin{equation}\label{linfcn}
\sum_{i=1}^{m}\sum_{j=1}^{m}d_{ij}\int_{\R^{\adimn}}1_{\Omega_{i}}(x)T_{\rho}1_{\Omega_{j}'}(x)\gamma_{\adimn}(x)\,\d x,
\end{equation}
where $(d_{ij})_{1\leq i,j\leq m}$ are arbitrary real constants (that are not all equal), though the proof written in \cite{heilman20d} does not incorporate this generality.

It was also shown in \cite{heilman20d} that the quadratic function of a single partition
\begin{equation}\label{boreq}
\sum_{i=1}^{m}\int_{\R^{\adimn}}1_{\Omega_{i}}(x)T_{\rho}1_{\Omega_{i}}(x)\gamma_{\adimn}(x)\,\d x
\end{equation}
has maximizers that are low dimensional, subject to a volume constraint that $\gamma_{\adimn}(\Omega_{i})=a_{i}$ for all $1\leq i\leq m$, where $a\in\Delta_{m}$ is fixed.

Both of these results used the same calculus of variations strategy.  For both results, the key step of the proof is to consider an infinitesimal translation of the optimizing sets.  If the partitions are not low-dimensional, then one can obtain a positive second derivative with respect to this infinitesimal translation, contradicting the maximality of the partitions.

In particular, \cite{heilman20d} provided the first variational proof of Borell's inequality, i.e. specifying which sets maximize \eqref{boreq} in the case $m=2$.  Later, we showed in \cite{heilman21} that this strategy could be extended to prove a robust version of Borell's inequality, proving some conjectures of Eldan \cite{eldan13}.  In that result, we consider noise stability plus a ``penalty term,'' and we show that if this penalty term is sufficiently small, then the sets maximizing noise stability still maximize the noise stability plus a penalty term.

The strategy of \cite{heilman20d,heilman21} was recently adapted in \cite{hwang21} to the setting of functions taking values in spheres, rather than functions taking values in the simplex.

In this work, we carry this calculus of variations strategy forward for quadratic functions of the noise stability of two partitions, as in Problem \ref{prob2}.  In this setting, we did not seem to use any special property of quadratic functions in Problem \ref{prob2}.  Indeed, it could be the case that any local optima of functions of noise stability could be low-dimensional (except in trivial cases, such as maximizing $\sum_{i=1}^{m}\sum_{j=1}^{m}\int_{\R^{\adimn}}1_{\Omega_{i}}(x)T_{\rho}1_{\Omega_{j}'}(x)\gamma_{\adimn}(x)\,\d x$, since this quantity is always one for partitions $\{\Omega_{i}\}_{i=1}^{m}$ and $\{\Omega_{j}'\}_{j=1}^{m}$ of $\R^{\adimn}$.)

\subsection{Outline of the Proof of the Structure Theorem}

The proof elaborates upon the strategy of \cite{heilman20d}, originating in a corresponding argument for the Gaussian surface area from \cite{mcgonagle15,barchiesi16,milman18b,heilman18}).   For didactic purposes, we will postpone a discussion of technical difficulties (such as existence and regularity of a minimizer) to Section \ref{secpre}.

Suppose there exist measurable $\Omega_{1},\ldots,\Omega_{m},\Omega_{1}',\ldots,\Omega_{m}'\subset\R^{\adimn}$ locally minimizing Problem \ref{prob2}.  Suppose we call the quantity in Problem \ref{prob2}
$$F_{\rho}(\Omega_{1},\ldots,\Omega_{m},\Omega_{1}',\ldots,\Omega_{m}').$$
A second variation argument (Lemma \ref{lemma7rn} below) implies that, if $\overline{X},\overline{Y}\colon\R^{\adimn}\to\R^{\adimn}$ are vector fields, then there is a particular quadratic form $Q_{\rho}\colon\R^{\adimn}\times\R^{\adimn}\to\R$ such that
$$\frac{\d^{2}}{\d s^{2}}\Big|_{s=0}F_{\rho}
(\Omega_{1}+s\overline{X}(\Omega_{1}),\ldots,\Omega_{m}+s\overline{X}(\Omega_{m}),\Omega_{1}'+s\overline{Y}(\Omega_{1}'),\ldots,\Omega_{m}'+s\overline{Y}(\Omega_{m}'))=Q_{\rho}(\overline{X},\overline{Y}).$$
The \textit{key lemma}, Lemma \ref{treig2}, shows that when $\overline{X}=\overline{Y}=v\in\R^{\adimn}$ is the constant vector, then we get an eigenfunction of the quadratic form $Q$ with a negative eigenvalue, for any $v\in V$, where $V$ is a linear subspace of codimension $m^{2}-1$, unless $v$ is perpendicular to all normal vectors of all boundaries of $\Omega_{1},\ldots,\Omega_{m},\Omega_{1}',\ldots,\Omega_{m}'$.  That is, $Q_{\rho}(v,v)<0$ when $\rho>0$ for all $v\in V$, unless $v$ is perpendicular to all normal vectors of all boundaries of $\Omega_{1},\ldots,\Omega_{m},\Omega_{1}',\ldots,\Omega_{m}'$.  (And $Q_{\rho}(v,-v)<0$ when $\rho<0$.)

\subsection{Our Contribution}

Our first main Theorem \ref{mainthm1n} shows that if we have a pair of partitions whose image under $C_{\rho}$ is in the boundary of $\mathcal{S}_{\rho,m}(\adimn)$, then these sets can be written (in a nontrivial way) as minimizers of Problem \ref{prob2}, and these sets are $(m^{2}-1)$-dimensional.

\begin{theorem}[\embolden{Main Structure Theorem/ Dimension Reduction}]\label{mainthm1n}
Fix $-1<\rho<1$.  Let $m\geq2$.  Let $\sdimn>0$ with $\adimn\geq m^{2}-1$.  Let $\Omega_{1},\ldots\Omega_{m},\Omega_{1}',\ldots\Omega_{m}'\subset\R^{\adimn}$ be a pair of partitions of $\R^{\adimn}$.  Assume that $C_{\rho}(1_{\Omega_{1}},\ldots,1_{\Omega_{m}},1_{\Omega_{1}'},\ldots,1_{\Omega_{m}'})\in\mathrm{Ext}\mathcal{S}_{\rho,m}(\adimn)$.  Then there exists $Z\in\R^{m\times m}$ such that $\Omega_{1},\ldots\Omega_{m},\Omega_{1}',\ldots\Omega_{m}'\subset\R^{\adimn}$ minimizes Problem \ref{prob2} and such that, after rotating the sets $\Omega_{1},\ldots\Omega_{m},$ $\Omega_{1}',\ldots\Omega_{m}'$ and applying Lebesgue measure zero changes to these sets, there exist measurable sets $\Theta_{1},\ldots\Theta_{m},\Theta_{1}',\ldots\Theta_{m}'\subset\R^{m^{2}-1}$ such that,
$$\Omega_{i}=\Theta_{i}\times\R^{\sdimn+2-m^{2}},\,\,\Omega_{i}'=\Theta_{i}'\times\R^{\sdimn+2-m^{2}}\qquad\forall\, 1\leq i\leq m.$$
\end{theorem}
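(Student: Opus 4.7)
The plan is to proceed in three stages: first reduce the extremality condition to a constrained minimization problem of the form of Problem \ref{prob2}, then apply a second-variation calculus, and finally extract the dimension reduction from the sign of the second variation.

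\emph{Stage 1 (reduction to Problem \ref{prob2}).} Let $P\colonequals C_{\rho}(1_{\Omega_{1}},\ldots,1_{\Omega_{m}},1_{\Omega_{1}'},\ldots,1_{\Omega_{m}'})\in\mathrm{Ext}\mathcal{S}_{\rho,m}(\adimn)$. Since $\mathcal{S}_{\rho,m}(\adimn)$ is a compact biconvex subset of $\R^{m\times m}$ and $P$ is a partition-realized boundary point, one can choose $Z\in\R^{m\times m}$ close to but not in $\mathcal{S}_{\rho,m}(\adimn)$ and weights $d_{ij}>0$ so that the partitions $(\Omega_{i})_{i=1}^{m}$, $(\Omega_{j}')_{j=1}^{m}$ constitute a local minimizer of the functional in Problem \ref{prob2} (intuitively, $Z$ is a point outside $\mathcal{S}_{\rho,m}(\adimn)$ witnessing $P$ as a nearest point in a weighted Euclidean norm). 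The hypothesis $P\in\mathrm{Ext}\mathcal{S}_{\rho,m}(\adimn)$ together with a careful choice of $Z$ (informed by Lemma \ref{reglem}) rules out the degenerate cases, alluded to in the discussion after Problem \ref{prob2}, where every partition minimizes the functional.

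\emph{Stage 2 (second variation).} By the local minimality established in Stage 1, the second-variation formula (Lemma \ref{lemma7rn}) supplies a quadratic form $Q_{\rho}$ on pairs of smooth vector fields $(\overline{X},\overline{Y})$ on $\R^{\adimn}$ such that $Q_{\rho}(\overline{X},\overline{Y})\geq 0$. Specialize to constant vector fields: take $\overline{X}=\overline{Y}=v$ when $\rho>0$, and $\overline{X}=-\overline{Y}=v$ when $\rho<0$, for $v\in\R^{\adimn}$. This corresponds to a simultaneous infinitesimal translation of all $2m$ sets and reduces $Q_{\rho}$ to a bilinear form on $\R^{\adimn}$ expressible as a boundary integral over $\cup_{i}\partial^{*}\Omega_{i}\cup\cup_{j}\partial^{*}\Omega_{j}'$.

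\emph{Stage 3 (dimension reduction).} Apply the key Lemma \ref{treig2}: for every $v$ in a fixed linear subspace $V\subset\R^{\adimn}$ of codimension $m^{2}-1$, either $Q_{\rho}(v,\pm v)<0$ (with sign matching that of $\rho$) or else $v$ is orthogonal to every outward unit normal along $\partial^{*}\Omega_{i}$ and $\partial^{*}\Omega_{j}'$. Combined with the nonnegativity $Q_{\rho}\geq 0$ from Stage 2, this forces every $v\in V$ to be orthogonal to every such normal. Hence all boundary normals lie in the $(m^{2}-1)$-dimensional orthogonal complement $V^{\perp}$. Rotating $\R^{\adimn}$ so $V^{\perp}=\R^{m^{2}-1}\times\{0\}^{\sdimn+2-m^{2}}$ and invoking the standard geometric-measure-theoretic fact that a set whose reduced boundary has all normals in a given subspace is, up to a Lebesgue null set, a cylinder over that subspace, we obtain the required product decomposition $\Omega_{i}=\Theta_{i}\times\R^{\sdimn+2-m^{2}}$ and similarly for $\Omega_{i}'$.

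The main obstacle I anticipate is Stage 3, specifically marshalling enough regularity (finite Gaussian perimeter, a well-behaved reduced boundary, admissible test vector fields) to justify that constant-vector-field variations indeed satisfy the hypotheses of Lemma \ref{treig2}, and to push the infinitesimal translation invariance along $V$ to an exact cylindrical structure. This is precisely the analytic machinery developed in \cite{heilman20d,heilman21}, and the approach would be to mirror their calculus-of-variations framework, coupled with the choice of $Z$ in Stage 1 calibrated via Lemma \ref{reglem} so that the second variation delivers strictly negative eigenvalues rather than collapsing to the trivial case.
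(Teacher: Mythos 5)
Your proposal is correct and mirrors the paper's proof in all essential respects: Stage 1 corresponds to Lemmas \ref{existlemn} and \ref{reglem}, Stage 2 to Lemmas \ref{lemma7rn} and \ref{keylem} with the constant vector fields $X=v$, $X'=\mathrm{sign}(\rho)v$, and Stage 3 to Lemma \ref{treig2} together with the rank-nullity bound on the linear map $v\mapsto M(v)$ whose null space $V$ has codimension at most $m^{2}-1$. The only detail worth emphasizing is that the sign forcing in Stage 3 is not a dichotomy but a forced equality: for $v\in V$ the squared term in Lemma \ref{keylem} vanishes, so the second variation equals the strictly negative factor $(-1/|\rho|+1)$ times nonnegative boundary integrals of $|\langle v,N\rangle|^{2}$, and minimality makes this second variation nonnegative, forcing those integrals (and hence $\langle v,N\rangle$) to vanish via the positivity of $\|\overline{\nabla}T_{\rho}(\cdots)\|$ from Lemma \ref{lemma7rn}.
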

\begin{remark}
The case $\rho=0$ of Theorem \ref{mainthm1n} is easy, so we always assume $\rho\neq0$ below, unless otherwise stated.
\end{remark}

The main corollary below answers Problem \ref{prob1} affirmatively.

\begin{cor}[\embolden{Main Corollary, Gaussian Case}]\label{cor1}
Let $-1<\rho<1$ and let $m\geq2$ be an integer.  Then the set of $m\times m$ probability distribution matrices from $\rho$-correlated Gaussian sources of dimension $k$ is the same for any $k\geq m^{2}-1$, i.e.
$$\mathcal{S}_{\rho,m}(m^{2}-1)=\overline{\cup_{j=1}^{\infty}\mathcal{S}_{\rho,m}(j)}.$$
\end{cor}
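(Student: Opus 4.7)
The inclusion $\mathcal{S}_{\rho,m}(m^{2}-1)\subseteq\overline{\cup_{j}\mathcal{S}_{\rho,m}(j)}$ follows immediately from the monotonicity $\mathcal{S}_{\rho,m}(k)\subseteq\mathcal{S}_{\rho,m}(k+1)$ (extend $f,g$ to be constant in the new coordinate, and use that $\gamma_{k+1}$ and $T_{\rho}$ factor into a trivial product along the new direction). For the reverse inclusion I would first observe that each $\mathcal{S}_{\rho,m}(k)$ is compact in $\R^{m\times m}$: the measurable maps $f\colon\R^{k}\to\Delta_{m}$ form a weakly sequentially compact subset of $L^{2}(\gamma_{k})$, and $C_{\rho}$ is continuous in this weak topology since $T_{\rho}$ is a compact operator on $L^{2}(\gamma_{k})$. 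Hence $\mathcal{S}_{\rho,m}(m^{2}-1)$ is closed, and it suffices to prove $\mathcal{S}_{\rho,m}(k)\subseteq\mathcal{S}_{\rho,m}(m^{2}-1)$ for every $k\geq m^{2}-1$.

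Since Theorem \ref{mainthm1n} is formulated for indicator-valued functions, the next step is a randomization reduction. Given $f,g\colon\R^{\ell}\to\Delta_{m}$, I define $\widetilde{f},\widetilde{g}\colon\R^{\ell+2}\to\{e_{1},\ldots,e_{m}\}$ by setting $\widetilde{f}(x_{1},\ldots,x_{\ell+2})=e_{i}$ precisely when $\Phi(x_{\ell+1})\in\bigl[\sum_{r<i}f_{r}(x_{1},\ldots,x_{\ell}),\,\sum_{r\leq i}f_{r}(x_{1},\ldots,x_{\ell})\bigr)$, where $\Phi$ denotes the Gaussian CDF, and analogously for $\widetilde{g}$ using the coordinate $y_{\ell+2}$ (not $y_{\ell+1}$). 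Because the correlated pairs $(X_{i},Y_{i})$ are i.i.d., the randomizing coordinates $X_{\ell+1}$ and $Y_{\ell+2}$ come from different pairs, hence are mutually independent and independent of the shared coordinates $(X_{1},\ldots,X_{\ell},Y_{1},\ldots,Y_{\ell})$. Conditioning on the shared coordinates then yields $C_{\rho}(\widetilde{f},\widetilde{g})=C_{\rho}(f,g)$, so every $M\in\mathcal{S}_{\rho,m}(\ell)$ is realized by indicator partitions of $\R^{\ell+2}$.

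For the main step I argue by contradiction: suppose some $M\in\mathcal{S}_{\rho,m}(k)\setminus\mathcal{S}_{\rho,m}(m^{2}-1)$ exists. Then the continuous function $M\mapsto d(M,\mathcal{S}_{\rho,m}(m^{2}-1))$ attains a strictly positive maximum at some $M^{*}$ on the compact set $\overline{\cup_{j}\mathcal{S}_{\rho,m}(j)}$; after replacing $M^{*}$ by a suitable member of an approximating sequence from $\mathcal{S}_{\rho,m}(k')$ I may assume $M^{*}\in\mathcal{S}_{\rho,m}(k')$ for some finite $k'\geq m^{2}-1$, with distance to $\mathcal{S}_{\rho,m}(m^{2}-1)$ still bounded away from zero. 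Applying the randomization above gives indicator partitions $(\Omega_{1},\ldots,\Omega_{m},\Omega_{1}',\ldots,\Omega_{m}')$ of $\R^{k'+2}$ with $C_{\rho}(1_{\Omega_{i}},1_{\Omega_{j}'})=M^{*}$. Because $\mathcal{S}_{\rho,m}(k'+2)\subseteq\overline{\cup_{j}\mathcal{S}_{\rho,m}(j)}$, the global-maximum property of $M^{*}$ forces $M^{*}\in\partial\mathcal{S}_{\rho,m}(k'+2)$: were $M^{*}$ interior, a small translation in the direction away from a nearest point of $\mathcal{S}_{\rho,m}(m^{2}-1)$ would stay inside $\mathcal{S}_{\rho,m}(k'+2)$ and strictly increase the distance, contradicting maximality. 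Thus $M^{*}\in\mathrm{Ext}\mathcal{S}_{\rho,m}(k'+2)$, and Theorem \ref{mainthm1n} produces, after rotation and a Lebesgue-null modification, sets $\Theta_{i},\Theta_{i}'\subseteq\R^{m^{2}-1}$ with $\Omega_{i}=\Theta_{i}\times\R^{k'+4-m^{2}}$ and $\Omega_{i}'=\Theta_{i}'\times\R^{k'+4-m^{2}}$. The tensor factorizations $\gamma_{k'+2}=\gamma_{m^{2}-1}\otimes\gamma_{k'+4-m^{2}}$ and the matching factorization of $T_{\rho}$ then collapse the extra coordinates, yielding $C_{\rho}(1_{\Omega_{i}},1_{\Omega_{j}'})=C_{\rho}(1_{\Theta_{i}},1_{\Theta_{j}'})$, so $M^{*}\in\mathcal{S}_{\rho,m}(m^{2}-1)$, a contradiction. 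The main obstacle is carrying out the passage from the closure of the union down to a finite-dimensional surrogate $\mathcal{S}_{\rho,m}(k')$, and then the two-dimensional randomization lift into $\mathcal{S}_{\rho,m}(k'+2)$, carefully enough that the global-maximum (hence boundary) property of $M^{*}$ is preserved throughout.
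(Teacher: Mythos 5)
Your setup (monotonicity, compactness via weak compactness and the compact operator $T_\rho$, and the two‑coordinate randomization lift from $\Delta_m$‑valued to $\{e_1,\ldots,e_m\}$‑valued functions) is sound, and the overall strategy is genuinely different from the paper's: the paper argues directly from Theorem~\ref{mainthm1n} that every boundary point achieved by indicator functions already lies in $\mathcal{S}_{\rho,m}(m^2-1)$ and then uses the bilinearity of $C_\rho$ to decompose arbitrary $\Delta_m$‑valued pairs into convex combinations of indicator pairs, whereas you set up a global ``farthest point'' contradiction. However, the farthest‑point step has two gaps, and they break the argument.

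First, after you ``replace $M^*$ by a suitable member of an approximating sequence from $\mathcal{S}_{\rho,m}(k')$,'' the new $M^*$ is no longer the global maximizer of $d(\cdot,\mathcal{S}_{\rho,m}(m^2-1))$ on $\overline{\cup_j\mathcal{S}_{\rho,m}(j)}$ — it only approximately maximizes — but the very next sentence invokes ``the global‑maximum property of $M^*$.'' The approximant could lie strictly in the interior of $\mathcal{S}_{\rho,m}(k'+2)$ with a nearby point having larger distance, so no contradiction follows. Second, even for the exact maximizer the implication ``interior $\Rightarrow$ a small translation increases the distance'' is false for distance to a non‑convex compact set: if $\mathcal{S}_{\rho,m}(m^2-1)$ had a hole or pocket, the maximizer could sit in its center, where \emph{every} small translation decreases the distance (think of $d(\cdot,S)$ with $S$ a circle or annulus and the maximizer at the center). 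The paper never establishes that $\mathcal{S}_{\rho,m}(m^2-1)$ is star‑shaped, simply connected, or otherwise hole‑free, so you cannot conclude $M^*\in\partial\mathcal{S}_{\rho,m}(k'+2)$, and hence cannot conclude $M^*\in\mathrm{Ext}\mathcal{S}_{\rho,m}(k'+2)$, which is the hypothesis you need for Theorem~\ref{mainthm1n}. To repair this you would either need a topological fact about $\mathcal{S}_{\rho,m}(m^2-1)$ that the paper does not prove, or you should switch to the paper's route: show $\mathrm{Ext}\mathcal{S}_{\rho,m}(k)\subseteq\mathcal{S}_{\rho,m}(m^2-1)$ directly from Theorem~\ref{mainthm1n}, and then use bilinearity of $C_\rho$ (writing each $\Delta_m$‑valued $f$ as a pointwise convex combination of indicator‑valued functions and applying Fubini) to pass from $\mathrm{Ext}\mathcal{S}_{\rho,m}(k)$ to all of $\mathcal{S}_{\rho,m}(k)$, avoiding any appeal to a farthest‑point argument.
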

\begin{proof}
From Theorem \ref{mainthm1n}, the boundary extreme points $\mathrm{Ext}\mathcal{S}_{\rho,m}(k)$ of $\mathcal{S}_{\rho,m}(k)$ satisfy
$$\mathrm{Ext}\mathcal{S}_{\rho,m}(k)=\mathrm{Ext}\mathcal{S}_{\rho,m}(m^{2}-1),\qquad\forall\,k\geq m^{2}-1.$$
Since $\mathcal{S}_{\rho,m}(k)$ is the bilinear image of a convex set, this ``low-dimensionality'' property extends also to the set of all extreme points, and then the interior of $\mathcal{S}_{\rho,m}$, so that
$$\mathcal{S}_{\rho,m}(k)=\mathcal{S}_{\rho,m}(m^{2}-1),\qquad\forall\,k\geq m^{2}-1.$$
(We now justify the previous sentence.  Since $C_{\rho}(\cdot,\cdot)$ is a bilinear function, a point in the boundary of $\mathcal{S}_{\rho,m}(k)$ that is not an extreme point can be written as a convex combination of points in $\mathrm{Ext}\mathcal{S}_{\rho,m}(k)$.  For example, if $C_{\rho}(f_{1},\ldots)$ satisfies $\P(\abs{f_{1}}\in\{0,1\})<1$, then we can write $C_{\rho}(f_{1},\ldots)$ as a convex combination of $C_{\rho}(tg_{1},\ldots)+C_{\rho}((1-t)g_{1}',\ldots)=tC_{\rho}(g_{1},\ldots)+(1-t)C_{\rho}(g_{1}',\ldots)$ for some $g_{1},g_{1}'\colon\R^{k}\to\{0,1\}$.)
\end{proof}

\subsubsection{An Efficient Algorithm for Noninteractive Simulation from Gaussian Sources}\label{nisgausalg}

Corollary \ref{cor1} has an algorithm associated to it, for computing which probability distributions can be noninteractively simulated from correlated Gaussians.  Such algorithms have already been implicitly provided in \cite{de17,de18,ghazi18} albeit with a dependence on the ambient dimension of the Gaussian random variables.  Corollary \ref{cor1} removes this dimension-dependence, thereby improving the run time of these algorithms.

We briefly describe this algorithm.  Let $\epsilon>0$ with $0<\epsilon<\abs{\rho}$.  Let $f\colon\R^{\adimn}\to\Delta_{m}$.  Let $\{h_{j}\}_{j\in\N^{\adimn}}$ denote the Hermite polynomials, which are an orthonormal basis of $L_{2}(\gamma_{\adimn})$ with respect to the inner product $(f,g)\mapsto\int_{\R^{\adimn}}f(x)g(x)\gamma_{\adimn}(x)\,\d x$.  Consider the map $H(f)\colonequals (\int_{\R^{\adimn}} f(x) h_{j}(x)\gamma_{\adimn}(x)\,\d x)_{j\in\N^{\adimn}\colon\vnorm{j}_{1}\leq\frac{\log(\epsilon/2)}{\log\abs{\rho}}}$.  Let $\{f^{(i)}\}$ be a maximal $\epsilon$-separated set for functions from $\R^{\adimn}$ to $N$ with respect to the metric $\vnorm{H(f)}_{2}$, where $N$ is an $\epsilon/2$-net for $\Delta_{m}$.  Since an $\epsilon/2$-net for $\Delta_{m}$ has size at most $(1+4/\epsilon)^{m}$, and the dimension of the range of $H$ is at most $(\log(\epsilon/2)/\log\abs{\rho})^{\adimn}$, the set $\{f^{(i)}\}$ has cardinality at most
$$(1+4/\epsilon)^{m(\log(\epsilon/2)/\log\abs{\rho})^{\adimn}}.$$
Corollary \ref{cor1} says we may assume that $\adimn=m^{2}-1$, so the maximal separated set has cardinality
$$(1+4/\epsilon)^{m(\log(\epsilon/2)/\log\abs{\rho})^{m^{2}-1}},$$
and this provides a bound on the run time of the algorithm.  This run time should be compared with \cite[Theorem 1.4]{ghazi18}, which does not explicitly state a run time bound in this case, but presumably their methods give an implicit run time at least as large as $\exp\exp(\mathrm{poly}(m,\frac{1}{1-\rho},\frac{1}{\epsilon}))$.

\subsubsection{Noninteractive Simulation from Discrete Finite Sources}

Theorem \ref{mainthm1n} and Corollary \ref{cor1} concern the noninteractive simulation problem where $(X,Y)$ are correlated Gaussian random variables with correlation $\rho\in(-1,1)$.  Let $p>0$, $p\in\Z$.  As shown in \cite{de18}, if $(X,Y)$ are random variables in $\{1,\ldots,p\}^{2}$, then Problem \ref{proborig} reduces to the case that $(X,Y)$ are correlated Gaussians in Problem \ref{proborig}.  (A similar statement was shown in \cite{ghazi18}, though Theorem \ref{mainthm1n} and Corollary \ref{cor1} do not seem to substantially improve upon the methods of \cite{ghazi18}.)

Before stating the main noninteractive simulation result, we state the Gap version of Problem \ref{proborig}.  The problem below decides if $(U,V)$ can be well approximated using a noninteractive simulation with source $(X,Y)$.

\begin{prob}[\embolden{Gap Noninteractive Simulation Problem}]\label{proborig2}
Let $(X,Y)$ be random variables with values in $\{1,\ldots,p\}^{2}$.  Let $(U,V)$ be random variables with values in $\{1,\ldots,m\}^{2}$.  Let $(X_{1},Y_{1}),(X_{2},Y_{2}),\ldots$ be i.i.d. copies of $(X,Y)$.  Let $0<\epsilon<1$.  Distinguish between the following two cases.
\begin{itemize}
\item (Case 1) There exists $n>0$ and there exist $f,g\colon\{1,\ldots,p\}^{n}\to\Delta_{m}$ such that
$$d_{\mathrm{TV}}(f(X_{1},\ldots,X_{n}),g(Y_{1},\ldots,Y_{n}),\, (U,V))<\epsilon.$$
\item (Case 2) For all $n>0$ and for all $f,g\colon\{1,\ldots,p\}^{n}\to\Delta_{m}$, we have
$$d_{\mathrm{TV}}(f(X_{1},\ldots,X_{n}),g(Y_{1},\ldots,Y_{n}),\, (U,V))>10\epsilon.$$
\end{itemize}
\end{prob}

\begin{theorem}[\embolden{Improved Dimension-Free Noninteractive Simulation}]\label{mainthm3}
Let $\rho\colonequals\rho_{M}(X,Y)$ using \eqref{hgrdef} and $\alpha\colonequals\min_{x,y\in\{1,\ldots,p\}\colon \P((X,Y)=(x,y))\neq0}\P((X,Y)=(x,y))$.  Assume $\rho<1$ and $\alpha>0$.  Then there exists an algorithm that solves Problem \ref{proborig2} in time
$$(5/\epsilon)^{mp^{(100\cdot 2^{m}/\epsilon)^{\frac{3600m\log(m/\epsilon)\log(1/\alpha)}{(1-\rho)\epsilon}}}}.$$
In particular, this algorithm does not depend on the dimension $n$.
\end{theorem}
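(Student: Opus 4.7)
The plan is to reduce Problem \ref{proborig2} to the Gaussian analogue that Corollary \ref{cor1} already controls, following the general scheme of \cite{de18,ghazi18}, but leveraging Corollary \ref{cor1} to bound the dimension of the resulting Gaussian enumeration by $m^{2}-1$ rather than by something that grows with $n$. The reduction combines three familiar moves: discrete noise smoothing, low-degree truncation, and an application of Mossel's non-linear central limit theorem (the invariance principle) to match a discrete source with a Gaussian source of equal covariance on low-degree polynomials.

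Concretely, first I would smooth any hypothetical simulators $f,g\colon\{1,\ldots,p\}^{n}\to\Delta_{m}$ by the product Ornstein--Uhlenbeck semigroup adapted to $(X,Y)$ at time $\delta\asymp(1-\rho)\epsilon$, and discard Fourier weight above degree $d\asymp \log(m/\epsilon)\log(1/\alpha)/((1-\rho)\epsilon)$. Since the minimum positive mass $\alpha$ of $(X,Y)$ plays the role of the log-Sobolev constant for the underlying biased product measure, hypercontractivity yields that both operations change the output distribution by at most $\epsilon/10$ in total variation. Next I would apply a joint regularity lemma to truncate the ``influential'' coordinates so that every remaining influence is bounded by a threshold $\tau$ with $\log(1/\tau)\asymp F\colonequals \tfrac{3600m\log(m/\epsilon)\log(1/\alpha)}{(1-\rho)\epsilon}$, and then invoke Mossel's invariance principle to obtain degree-$d$ Gaussian polynomials $F,G\colon\R^{N}\to\Delta_{m}$ whose output distribution matches that of $f,g$ up to another $\epsilon/10$.

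Now Corollary \ref{cor1} lets us forget the ambient dimension $N$ and work as if $F,G\colon\R^{m^{2}-1}\to\Delta_{m}$, since every Gaussian-simulatable distribution is already achieved in dimension $m^{2}-1$. To conclude, I would enumerate, as in Section \ref{nisgausalg}, an $\epsilon$-separated family of low-degree Hermite expansions valued in an $\epsilon/2$-net of $\Delta_{m}$. Translating the accuracy requirement back to the discrete side along the invariance-principle correspondence produces a list of candidate discrete simulators of size at most $(5/\epsilon)^{m\cdot p^{E}}$ with $E=(100\cdot 2^{m}/\epsilon)^{F}$; checking each candidate's total variation against the target distribution $(U,V)$ distinguishes the two cases of Problem \ref{proborig2}.

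The main obstacle is not any individual ingredient but the simultaneous choice of $\delta$, $d$, and $\tau$ so that (i) the smoothing error, (ii) the truncation error, (iii) the hypercontractivity--regularity error, and (iv) the invariance-principle error all fit inside $\epsilon/2$, while the resulting net size depends only on $\epsilon, m, p, \rho, \alpha$. This is essentially the bookkeeping already carried out in \cite[Theorem 1.6]{ghazi18}; the genuinely new ingredient is the last step, where the dimension of the Gaussian enumeration is reduced from a function of $n$ down to the constant $m^{2}-1$ supplied by Corollary \ref{cor1}, and it is this swap that upgrades the triple-exponential run time of \cite{ghazi18} into the double-exponential bound stated in the theorem.
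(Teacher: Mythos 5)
Your proposal takes essentially the same route as the paper's proof: pass from discrete sources to Gaussian sources via smoothing, low-degree truncation, influence regularization and the invariance principle of \cite{mossel10b,isaksson11}, apply Corollary~\ref{cor1} to replace the $n$-dependent Gaussian dimension with the constant $m^{2}-1$ (the paper states this as plugging $n_{0}=m^{2}$ into \cite[Theorem 1.5]{de18}), and finish with a brute-force enumeration over a net of candidate simulators on a bounded number of discrete coordinates. The paper's own write-up is terse, deferring the bookkeeping to the final page of \cite{de18}, while you spell out the intermediate steps; the decomposition and the use of Corollary~\ref{cor1} as the single new ingredient coincide. One small correction to your closing remark: the bound $(5/\epsilon)^{mp^{(100\cdot 2^{m}/\epsilon)^{F}}}$ with $F$ polynomial in $m$, $1/\epsilon$, $1/(1-\rho)$, $\log(1/\alpha)$ is still triple-exponential in those parameters ($F\mapsto(100\cdot2^{m}/\epsilon)^{F}\mapsto p^{(\cdot)}\mapsto(5/\epsilon)^{m(\cdot)}$ is three exponential levels), so the gain over \cite[Theorem 1.6]{ghazi18} is that the bound is explicit and independent of $n$, not that the exponential tower drops by a level; the genuinely double-exponential bound is the Gaussian one of Section~\ref{nisgausalg}.
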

This run time should be compared with \cite[Theorem 1.6]{ghazi18}, which gives a similar bound that is not explicit, although it is remarked in \cite{ghazi18} that an explicit bound could be written.

Since the argument of Theorem \ref{mainthm3} was already shown in \cite{de18} on the final page, we briefly describe how our result fits into the argument of \cite{de18}.

The final page of \cite{de18} uses the invariance principle from Section 6 of \cite{mossel10b} or \cite[Theorem 3.6]{isaksson11}.  We begin with $f,g\colon\{1,\ldots,p\}^{n}\to\Delta_{m}$.  Corollary \ref{cor1} implies that \cite[Theorem 1.5, Theorem 5]{de18} hold with $n_{0}\colonequals m^{2}$.  (We note a few differences in notation: our $\{1,\ldots,p\}$ is $\mathcal{Z}$ in \cite{de18}, and our $m$ is $k$ in \cite{de18}.)  In their notation, we then associate $f,g$ with new functions with domain $\R^{m_{0}}$ where $m_{0}\colonequals n_{0}/\kappa^{2}=m^{2}/\kappa^{2}$, where $\kappa>0$ is an upper bound on the influences of the new functions.   Denote $\rho\colonequals\rho_{M}(X,Y)$ from \eqref{hgrdef}, and assume that $\rho<1$.  Lemma 6.2 in \cite{mossel10b} says, we may choose \begin{equation}\label{gameq}
\gamma\colonequals\frac{(1-\rho)\epsilon}{100m\log(m/\epsilon)}
\end{equation}
to get a bound of $\epsilon/m$ in that Lemma.  Then the invariance principle \cite[Theorem 3.6]{isaksson11} requires an influence bound of the form
\begin{equation}\label{kapeq}
\kappa<\Big(\frac{\epsilon}{100\cdot 2^{m}}\Big)^{\frac{18\log(1/\alpha)}{\gamma}}\stackrel{\eqref{gameq}}{\leq} \Big(\frac{\epsilon}{100\cdot 2^{m}}\Big)^{\frac{1800m\log(m/\epsilon)\log(1/\alpha)}{(1-\rho)\epsilon}}.
\end{equation}
So, the number of variables that suffice for the last page of \cite{de18} is
\begin{equation}\label{m0eq}
m_{0}/\kappa^{2}\stackrel{\eqref{kapeq}}{=}\big(100\cdot 2^{m}/\epsilon\big)^{\frac{3600m\log(m/\epsilon)\log(1/\alpha)}{(1-\rho)\epsilon}}.
\end{equation}

We now have two functions $\widetilde{f},\widetilde{g}\colon\{1,\ldots,p\}^{m_{0}/\kappa^{2}}\to\Delta_{m}$ that closely approximate the original discrete functions $f,g$ in total variation distance.  We can then apply a brute-force algorithm similar to that of Section \ref{nisgausalg} for $\widetilde{f},\widetilde{g}$, as mentioned in \cite{ghazi18}.

Let $N$ be an $\epsilon/2$-net for $\Delta_{m}$ with size at most $(1+4/\epsilon)^{m}$.  The set of all functions from $\{1,\ldots,p\}^{m_{0}/\kappa^{2}}\to N$ then has cardinality at most
$$(1+4/\epsilon)^{mp^{m_{0}/\kappa^{2}}}.$$
So, searching over all pairs of functions from $\{1,\ldots,p\}^{m_{0}/\kappa^{2}}$ to $N$ solves Problem \ref{proborig2} in run time
$$(1+4/\epsilon)^{mp^{m_{0}/\kappa^{2}}}
\stackrel{\eqref{m0eq}}{\leq}(5/\epsilon)^{mp^{(100\cdot 2^{m}/\epsilon)^{\frac{3600m\log(m/\epsilon)\log(1/\alpha)}{(1-\rho)\epsilon}}}}.$$
We emphasize that this run time does not depend on the dimension $n$.

\section{Existence and Regularity}

\subsection{Preliminaries and Notation}\label{secpre}

We say that $\Sigma\subset\R^{\adimn}$ is an $\sdimn$-dimensional $C^{\infty}$ manifold with boundary if $\Sigma$ can be locally written as the graph of a $C^{\infty}$ function on a relatively open subset of $\{(x_{1},\ldots,x_{\sdimn})\in\R^{\sdimn}\colon x_{\sdimn}\geq0\}$.  For any $(\adimn)$-dimensional $C^{\infty}$ manifold $\Omega\subset\R^{\adimn}$ such that $\partial\Omega$ itself has a boundary, we denote
\begin{equation}\label{c0def}
\begin{aligned}
C_{0}^{\infty}(\Omega;\R^{\adimn})
&\colonequals\{f\colon \Omega\to\R^{\adimn}\colon f\in C^{\infty}(\Omega;\R^{\adimn}),\, f(\partial\partial \Omega)=0,\\
&\qquad\qquad\qquad\exists\,r>0,\,f(\Omega\cap(B(0,r))^{c})=0\}.
\end{aligned}
\end{equation}
We also denote $C_{0}^{\infty}(\Omega)\colonequals C_{0}^{\infty}(\Omega;\R)$.  We let $\mathrm{div}$ denote the divergence of a vector field in $\R^{\adimn}$.  For any $r>0$ and for any $x\in\R^{\adimn}$, we let $B(x,r)\colonequals\{y\in\R^{\adimn}\colon\vnormt{x-y}\leq r\}$ be the closed Euclidean ball of radius $r$ centered at $x\in\R^{\adimn}$.  Here $\partial\partial\Omega$ refers to the $(\sdimn-1)$-dimensional boundary of $\Omega$.

\begin{definition}[\embolden{Reduced Boundary}]\label{rbdef}
A measurable set $\Omega\subset\R^{\adimn}$ has \embolden{locally finite surface area} if, for any $r>0$,
$$\sup\left\{\int_{\Omega}\mathrm{div}(X(x))\,\d x\colon X\in C_{0}^{\infty}(B(0,r),\R^{\adimn}),\, \sup_{x\in\R^{\adimn}}\vnormt{X(x)}\leq1\right\}<\infty.$$
Equivalently, $\Omega$ has locally finite surface area if $\nabla 1_{\Omega}$ is a vector-valued Radon measure such that, for any $x\in\R^{\adimn}$, the total variation
$$
\vnormt{\nabla 1_{\Omega}}(B(x,1))
\colonequals\sup_{\substack{\mathrm{partitions}\\ C_{1},\ldots,C_{m}\,\mathrm{of}\,B(x,1) \\ m\geq1}}\sum_{i=1}^{m}\vnormt{\nabla 1_{\Omega}(C_{i})}
$$
is finite \cite{cicalese12}.  If $\Omega\subset\R^{\adimn}$ has locally finite surface area, we define the \embolden{reduced boundary} $\redb \Omega$ of $\Omega$ to be the set of points $x\in\R^{\adimn}$ such that
$$N(x)\colonequals-\lim_{r\to0^{+}}\frac{\nabla 1_{\Omega}(B(x,r))}{\vnormt{\nabla 1_{\Omega}}(B(x,r))}$$
exists, and it is exactly one element of $S^{\sdimn}\colonequals\{x\in\R^{\adimn}\colon\vnorm{x}=1\}$.
\end{definition}

The reduced boundary $\redb\Omega$ is a subset of the topological boundary $\partial\Omega$.  Also, $\redb\Omega$ and $\partial\Omega$ coincide with the support of $\nabla 1_{\Omega}$, except for a set of $\sdimn$-dimensional Hausdorff measure zero.

Let $\Omega\subset\R^{\adimn}$ be an $(\adimn)$-dimensional $C^{2}$ submanifold with reduced boundary $\Sigma\colonequals\redb \Omega$.  Let $N\colon\redA\to S^{\sdimn}$ be the unit exterior normal to $\redA$.  Let $X\in C_{0}^{\infty}(\R^{\adimn},\R^{\adimn})$.  We write $X$ in its components as $X=(X_{1},\ldots,X_{\adimn})$, so that $\mathrm{div}X=\sum_{i=1}^{\adimn}\frac{\partial}{\partial x_{i}}X_{i}$.  Let $\Psi\colon\R^{\adimn}\times(-1,1)\to\R^{\adimn}$ such that
\begin{equation}\label{nine2.3}
\Psi(x,0)=x,\qquad\qquad\frac{\d}{\d s}\Psi(x,s)=X(\Psi(x,s)),\quad\forall\,x\in\R^{\adimn},\,s\in(-1,1).
\end{equation}
For any $s\in(-1,1)$, let $\Omega^{(s)}\colonequals\Psi(\Omega,s)$.  Note that $\Omega^{(0)}=\Omega$.  Let $\Sigma^{(s)}\colonequals\redb\Omega^{(s)}$, $\forall$ $s\in(-1,1)$.
\begin{definition}
We call $\{\Omega^{(s)}\}_{s\in(-1,1)}$ as defined above a \embolden{variation} of $\Omega\subset\R^{\adimn}$.  We also call $\{\Sigma^{(s)}\}_{s\in(-1,1)}$ a \embolden{variation} of $\Sigma=\redb\Omega$.
\end{definition}

For any $x\in\R^{\adimn}$ and any $s\in(-1,1)$, define
\begin{equation}\label{two9c}
V(x,s)\colonequals\int_{\Omega^{(s)}}G(x,y)\,\d y.
\end{equation}

Below, when appropriate, we let $\,\d x$ denote Lebesgue measure, restricted to a surface $\redA\subset\R^{\adimn}$.  Let $\mathcal{F}\colonequals\{f\colon\R^{\adimn}\to\{e_{1},\ldots,e_{m}\}\subset\Delta_{m}\}$.  Define
$$L_{2}(\gamma_{\adimn})\colonequals\Big\{f\colon\R^{\adimn}\to\R\colon\vnorm{f}_{L_{2}(\gamma_{\adimn})}\colonequals\int_{\R^{\adimn}}\abs{f(x)}^{2}\gamma_{\adimn}(x)\,\d x<\infty\Big\}.$$


\begin{lemma}[\embolden{Existence of a Minimizer}]\label{existlemn}
Let $-1<\rho<1$ and let $m\geq2$.  Let $\Omega_{1},\ldots\Omega_{m}$ and $\Omega_{1}',\ldots\Omega_{m}'$ be measurable partitions of $\R^{\adimn}$ such that $C_{\rho}(1_{\Omega_{1}},\ldots,1_{\Omega_{m}},1_{\Omega_{1}'},\ldots,1_{\Omega_{m}'})$ is in the boundary of $\mathcal{S}_{\rho,m}(\adimn)$.  Then $\exists$ $Z\in\R^{m\times m}$ with $Z\neq C_{\rho}(1_{\Omega_{1}},\ldots,1_{\Omega_{m}},1_{\Omega_{1}'},\ldots,1_{\Omega_{m}'})$ such that, Problem \ref{prob2} has a local minimum at $\Omega_{1},\ldots\Omega_{m},\Omega_{1}',\ldots\Omega_{m}'$.  That is, there exists a neighborhood $U\subset\mathcal{F}\times\mathcal{F}$ of $\Omega_{1},\ldots\Omega_{m},\Omega_{1}',\ldots\Omega_{m}'$ with respect to the weak topology of $\prod_{i=1}^{2m}L_{2}(\gamma_{\adimn})$ such that $f_{1}=1_{\Omega_{1}},\ldots,f_{m}=1_{\Omega_{m}},g_{1}=1_{\Omega_{1}'},\ldots,g_{m}=1_{\Omega_{m}'}$ minimizes
$$\sum_{i=1}^{m}\sum_{j=1}^{m}d_{ij}\Big(\int_{\R^{\adimn}}f_{i}(x)T_{\rho}g_{j}(x)\gamma_{\adimn}(x)\,\d x - z_{ij}\Big)^{2},$$
over all $(f_{1},\ldots,f_{m}),(g_{1},\ldots,g_{m})\in U$.

\end{lemma}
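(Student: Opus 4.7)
The plan is to (i) establish that $\mathcal{S}_{\rho,m}(\adimn)$ is a compact subset of $\R^{m\times m}$, (ii) exploit the biconvex structure and the boundary hypothesis to produce an outward direction $v$ at $P:=C_{\rho}(1_{\Omega_{1}},\ldots,1_{\Omega_{m}'})$, from which $Z:=P+tv$ is defined for small $t>0$, and (iii) transfer the resulting local-minimum property from Euclidean distance in $\R^{m\times m}$ to the weak topology on $\prod_{i=1}^{2m}L_{2}(\gamma_{\adimn})$ using joint weak continuity of $C_{\rho}$.

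For step (i), the set $\mathcal{K}:=\{f\colon\R^{\adimn}\to\Delta_{m}\}$ is convex, uniformly bounded in $L_{\infty}$, and weakly closed (cut out by linear and positivity constraints), hence weakly compact in $L_{2}(\gamma_{\adimn})^{m}$ by reflexivity. The Ornstein--Uhlenbeck operator $T_{\rho}$ is compact on $L_{2}(\gamma_{\adimn})$ for $|\rho|<1$, since its Hermite spectrum $\{\rho^{|j|}\}_{j\in\N^{\adimn}}$ tends to zero. Hence if $f^{(n)}\rightharpoonup f$ and $g^{(n)}\rightharpoonup g$, then $T_{\rho}g_{j}^{(n)}\to T_{\rho}g_{j}$ in norm and $\int f_{i}^{(n)}T_{\rho}g_{j}^{(n)}\,\d\gamma_{\adimn}\to\int f_{i}T_{\rho}g_{j}\,\d\gamma_{\adimn}$. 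So $C_{\rho}$ is jointly weakly continuous on $\mathcal{K}\times\mathcal{K}$, and $\mathcal{S}_{\rho,m}(\adimn)=C_{\rho}(\mathcal{K}\times\mathcal{K})$ is compact in $\R^{m\times m}$.

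For step (ii), I exploit that $\mathcal{S}_{\rho,m}(\adimn)$ is the bilinear image of a convex set. The $f$-slice $A:=\{C_{\rho}(f,(1_{\Omega_{j}'})_j)\colon f\in\mathcal{K}\}$ and the $g$-slice $B:=\{C_{\rho}((1_{\Omega_{i}})_i,g)\colon g\in\mathcal{K}\}$ are convex subsets of $\R^{m\times m}$ through $P$. The bilinear expansion $C_{\rho}(1_{\Omega}+\phi,1_{\Omega'}+\psi)=P+C_{\rho}(\phi,1_{\Omega'})+C_{\rho}(1_{\Omega},\psi)+C_{\rho}(\phi,\psi)$ shows that if $P$ were in the relative interior of both $A$ and $B$, then small perturbations along the tangent directions of the two slices would already yield a full-dimensional neighborhood of $P$ in $\mathcal{S}_{\rho,m}(\adimn)$, contradicting $P\in\partial\mathcal{S}_{\rho,m}(\adimn)$. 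Say $P\in\partial A$. The supporting hyperplane theorem applied to the convex set $A$ yields a nonzero $v\in\R^{m\times m}$ with $\langle v,Q-P\rangle_{D}\le 0$ for all $Q\in A$, where $\langle\cdot,\cdot\rangle_{D}$ is the inner product weighted by the given $d_{ij}>0$. Take $Z:=P+tv$ with $t>0$ small, so $Z\neq P$, and
$$
\vnormt{Q-Z}_{D}^{2}-\vnormt{P-Z}_{D}^{2}=\vnormt{Q-P}_{D}^{2}-2t\langle v,Q-P\rangle_{D}\ge 0
$$
for all $Q\in A$. Combining with the analogous estimate from $B$ (and absorbing the coupling term $C_{\rho}(\phi,\psi)=O(\vnormt{\phi}\vnormt{\psi})$ into the quadratic $\vnormt{Q-P}_{D}^{2}$) extends this inequality to all $Q\in\mathcal{S}_{\rho,m}(\adimn)\cap B(P,\delta)$ for some $\delta>0$. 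For step (iii), joint weak continuity of $C_{\rho}$ gives a weak neighborhood $U\subset\mathcal{K}\times\mathcal{K}$ of $(1_{\Omega_{1}},\ldots,1_{\Omega_{m}'})$ with $C_{\rho}(U)\subset B(P,\delta)$; restricting to $U\cap(\mathcal{F}\times\mathcal{F})$ furnishes the required local minimum.

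The main obstacle is the claim that $P$ must lie on the relative boundary of one of the two convex slices $A$ or $B$, together with absorbing the bilinear coupling $C_{\rho}(\phi,\psi)$ uniformly into the quadratic term. The coupling estimate is a routine second-order bound for small $(\phi,\psi)$, but the slice-boundary alternative demands a careful infinitesimal analysis of $C_{\rho}$ around $(1_{\Omega},1_{\Omega'})$ to rule out escape directions in $\mathcal{S}_{\rho,m}(\adimn)$ that arise only from joint $(\phi,\psi)$-coupling and are unavailable from separate $\phi$- or $\psi$-perturbations.
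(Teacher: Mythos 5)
Your approach is genuinely different from the paper's. The paper introduces a metric $d_{\rho}$ built from the Hermite coefficients (weighted by $\absf{\rho}^{\vnormf{j}_{1}}$), writes $C_{\rho}$ via its second-order Taylor expansion \eqref{clineq}, bounds each term uniformly in $d_{\rho}$, and then asserts that a set with this kind of uniformly controlled second-order behavior satisfies an exterior-ball condition at its boundary points. You instead try to reduce to classical convex geometry by intersecting with the two convex slices $A$ and $B$ and applying the supporting hyperplane theorem. Steps (i) and (iii) of your proposal are sound (in fact the weak-compactness observation, using compactness of $T_{\rho}$, is a nice remark the paper does not make explicit), but step (ii) has a real gap.

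The gap is in the "combining with the analogous estimate from $B$'' step. Say $P\in\partial A$ and $v$ supports $A$. If $P$ lies in the relative interior of $B$, there is no analogous estimate from $B$, and your constructed $v$ need not be orthogonal to the tangent space $W_{B}$ of $B$ at $P$. In that case pick $\psi$ small in the tangent cone to $\mathcal{K}$ at $1_{\Omega'}$ with $\langle v, C_{\rho}(1_{\Omega},\psi)\rangle_{D}>0$ and set $\phi=0$, so $Q=C_{\rho}(1_{\Omega},1_{\Omega'}+\psi)=P+C_{\rho}(1_{\Omega},\psi)\in\mathcal{S}_{\rho,m}(\adimn)$. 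Then
$$\vnormt{Q-Z}_{D}^{2}-\vnormt{P-Z}_{D}^{2}=\vnormt{Q-P}_{D}^{2}-2t\langle v,Q-P\rangle_{D}\approx -2t\langle v,C_{\rho}(1_{\Omega},\psi)\rangle_{D}<0$$
for small $\psi$, since the linear term dominates the quadratic. So $P$ is \emph{not} a local minimizer for your $Z$, and the cross term $C_{\rho}(\phi,\psi)$ is not the obstruction — the single-variable term $C_{\rho}(1_{\Omega},\psi)$ already spoils the argument. What is needed is a $v$ that supports $A$ \emph{and} is orthogonal to $W_{B}$ (equivalently, a matrix annihilating all first-order directions from both slices, which is essentially the normal matrix $R$ the paper introduces in the proof of Lemma \ref{reglem}); the existence of such a $v$ is precisely what has to be argued and is not supplied by the supporting hyperplane theorem applied to a single slice. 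Separately, the slice dichotomy (``$P\in\mathrm{relint}(A)\cap\mathrm{relint}(B)$ forces $P$ interior to $\mathcal{S}_{\rho,m}(\adimn)$'') is plausible but also needs a surjectivity/inverse-function argument on the bilinear map restricted to the tangent cones of $\mathcal{K}$, since $1_{\Omega},1_{\Omega'}$ are boundary points of $\mathcal{K}$ and $W_{A}+W_{B}$ being full-dimensional is a genericity condition, not an automatic one.
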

\begin{proof}
Let $\{h_{j}\}_{j\in\N^{\adimn}}$ denote the Hermite polynomials, which are an orthonormal basis of $L_{2}(\gamma_{\adimn})$ with respect to the inner product $(f,g)\mapsto\int_{\R^{\adimn}}f(x)g(x)\gamma_{\adimn}(x)\,\d x$.  For any $f,g\in \mathcal{F}$, define a metric
$$d_{\rho}(f,g)\colonequals\Big(\sum_{j\in\N^{n}}\abs{\rho}^{\vnorm{j}_{1}}\Big(\int_{\R^{\adimn}}h_{j}(x)(f(x)-g(x))\gamma_{\adimn}(x)\,\d x\Big)^{2}\Big)^{1/2}.$$
This metric metrizes the weak topology of $\prod_{i=1}^{2m}L_{2}(\gamma_{\adimn})$ restricted to $\mathcal{F}\times\mathcal{F}$.
Also $C_{\rho}$ is a bilinear function, and it is equal to its second order Taylor series, since for all $\epsilon_{1},\epsilon_{2}\colon\R^{\adimn}\to\{e_{1},\ldots,e_{m}\}$, we have
\begin{equation}\label{clineq}
C_{\rho}((f,g)+(\epsilon_{1},\epsilon_{2}))=C_{\rho}(f,g)+[C_{\rho}(\epsilon_{1},g)+C_{\rho}(f,\epsilon_{2})]+C_{\rho}(\epsilon_{1},\epsilon_{2}).
\end{equation}
Moreover, this second order Taylor expansion satisfies, using $\vnormf{T_{\sqrt{\abs{\rho}}}h}_{L_{2}(\gamma_{\adimn})}\leq \vnorm{h}_{L_{2}(\gamma_{\adimn})}$,
$$\vnorm{C_{\rho}(\epsilon_{1},g)}_{2}\leq
\sum_{i,j=1}^{m}\vnorm{T_{\sqrt{\abs{\rho}}}\epsilon_{1,i}}_{L_{2}(\gamma_{\adimn})}\vnorm{T_{\sqrt{\abs{\rho}}}g_{j}}_{L_{2}(\gamma_{\adimn})}
\leq \sum_{i,j=1}^{m}d_{\rho}(\epsilon_{1,i},0).$$
$$\vnorm{C_{\rho}(f,\epsilon_{2})}_{2}\leq
\sum_{i,j=1}^{m}\vnorm{T_{\sqrt{\abs{\rho}}}\epsilon_{2,j}}_{L_{2}(\gamma_{\adimn})}\vnorm{T_{\sqrt{\abs{\rho}}}f_{i}}_{L_{2}(\gamma_{\adimn})}
\leq\sum_{i,j=1}^{m}d_{\rho}(\epsilon_{2,j},0).$$
$$\vnorm{C_{\rho}(\epsilon_{1},\epsilon_{2})}_{2}\leq
\sum_{i,j=1}^{m}\vnorm{T_{\sqrt{\abs{\rho}}}\epsilon_{1,i}}_{L_{2}(\gamma_{\adimn})}\vnorm{T_{\sqrt{\abs{\rho}}}\epsilon_{2,j}}_{L_{2}(\gamma_{\adimn})}
=\sum_{i,j=1}^{m}d_{\rho}(\epsilon_{1,i},0)d_{\rho}(\epsilon_{2,j},0).$$
Here $\vnorm{\cdot}_{2}$ denotes the matrix $\ell_{2}$ norm, i.e. the square root of the sum of the squared entries of a matrix.

Since the second order Taylor expansion of $C$ is ``compatible'' (i.e. continuous) with respect to the metric $d_{\rho}$ in this way, and the quadratic term $C_{\rho}(\epsilon_{1},\epsilon_{2})$ satisfies the above uniform bound, we deduce that there exists a neighborhood $U$ of $(f,g)$ in the weak topology of $\prod_{i=1}^{2m}L_{2}(\gamma_{\adimn})$ restricted to $\mathcal{F}\times\mathcal{F}$ such that $C_{\rho}(U)$ satisfies an exterior ball condition.  That is, each matrix $M\in C_{\rho}(U)\cap\partial \mathcal{S}_{\rho,m}(\adimn)$ has some $Z\neq M$, $Z\in\R^{m\times m}$ such that there exists some radius $r>0$ and there exists some closed Euclidean ball $B=\{N\in\R^{m\times m}\colon \vnorm{N-Z}_{2}\leq r\}$ centered at $Z$ such that $B\cap C_{\rho}(U)=M$.  It follows that Problem \ref{prob2} for this $Z$ has a local minimum at the partitions $\Omega_{1},\ldots\Omega_{m},\Omega_{1}',\ldots\Omega_{m}'$.

\end{proof}%

\begin{example}
Define $u_{ijk}\colonequals d_{ki}(s_{ki}-z_{ki})-d_{kj}(s_{kj}-z_{kj})$, where $d_{ij},z_{ij}$ are defined in Problem \ref{prob2}, and $s_{ij}\colonequals\int_{\R^{\adimn}}1_{\Omega_{i}}(x)T_{\rho}1_{\Omega_{j}'}(x)\gamma_{\adimn}(x)\,\d x$, $\forall$ $1\leq i,j,k\leq m$, and define $u_{ijk}'\colonequals d_{ik}(s_{ik}-z_{ik})-d_{jk}(s_{jk}-z_{jk})$, $\forall$ $1\leq i,j,k\leq m$.

It is instructive to consider the following example in which Lemma \ref{existlemn} has no content.  Let $f_{1}=\cdots=f_{m}=g_{1}=\cdots=g_{m}=1/m$, so that $C_{\rho}(f,g)_{ij}=s_{ij}=1/m^{2}$ for all $1\leq i,j\leq m$ and for all $\rho\in(-1,1)$.  Then let $z_{ij}\colonequals 2s_{ij}$ for all $1\leq i,j\leq m$, $d_{ij}\colonequals 1$ for all $1\leq i,j\leq m$.  Then $u_{ijk}=0$ for all $1\leq i,j,k\leq m$, and the first variation argument in \eqref{zero8} below has no content, i.e. the regularity Lemma \ref{reglem} and the first variation condition in Lemma \ref{firstvarmaxns} do not hold.  In fact, any element $(s_{ij})_{1\leq i,j\leq m}$ of $\mathcal{S}_{\rho,m}(\adimn)$ can be written as a quadratic minimization in Problem \ref{prob2}, with $z_{ij}\colonequals s_{ij}+1$ for all $1\leq i,j\leq m$.

If $u_{ijk}=0$ for all $1\leq i,j,k\leq m$ as in this example, then the following Lemma \ref{reglem} cannot hold.  Indeed, we should not expect arbitrary measurable partitions to have any regularity.  It is then an important technical issue to rule out the case that $u_{ijk}=0$ for all $1\leq i,j,k\leq m$.  If $s_{ij}-z_{ij}$ is nonconstant for all $1\leq i,j\leq m$, then we can rule out this situation, though this issue adds an extra layer of complication to the proof of Lemma \ref{reglem}.
\end{example}

\begin{lemma}[\embolden{Regularity of a Minimizer}]\label{reglem}
There exists a set $\Lambda$ in the boundary of $\mathcal{S}_{\rho,m}(\adimn)$ such that $\Lambda$ has measure zero in $\mathrm{Ext}\mathcal{S}_{\rho,m}(\adimn)$, and such that the following holds.  Let $\Omega_{1},\ldots,\Omega_{m},\Omega_{1}',\ldots,\Omega_{m}'\subset\R^{\adimn}$ be measurable partitions that locally minimize Problem \ref{prob2}, such that $C_{\rho}(1_{\Omega_{1}},\ldots,1_{\Omega_{m}},1_{\Omega_{1}'},\ldots,1_{\Omega_{m}'})$ is in $\partial \mathcal{S}_{\rho,m}(\adimn)\setminus\Lambda$ and such that $\{s_{ij}-z_{ij}\}_{1\leq i,j\leq m}$ are not all equal.  Then $\Omega_{1},\ldots,\Omega_{m},\Omega_{1}',\ldots,\Omega_{m}'$ have locally finite surface area.  Moreover, for all $1\leq i\leq m$ and for all $x\in\partial\Omega_{i}$ (or for all $x\in\partial\Omega_{i}'$), there exists a neighborhood $U\subset\R^{\adimn}$ of $x$ such that $U\cap \partial\Omega_{i}$ (or $U\cap \partial\Omega_{i}'$) is a finite union of $C^{\infty}$ $\sdimn$-dimensional manifolds with boundary.
\end{lemma}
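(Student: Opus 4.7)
The plan is to derive an Euler-Lagrange equation for Problem \ref{prob2} at the minimizing partition and then exploit the real analyticity of the Ornstein-Uhlenbeck transform applied to indicator functions. Define
\begin{equation*}
F_i(x) \colonequals \sum_{\ell=1}^{m} d_{i\ell}(s_{i\ell}-z_{i\ell})\, T_\rho 1_{\Omega_\ell'}(x), \qquad F_i'(x) \colonequals \sum_{\ell=1}^{m} d_{\ell i}(s_{\ell i}-z_{\ell i})\, T_\rho 1_{\Omega_\ell}(x).
\end{equation*}
Fix $i \neq k$, pick $X \in C_c^\infty(\R^{\adimn};\R^{\adimn})$ supported in a ball meeting only $\Omega_i \cup \Omega_k$, let $\Psi_s$ denote its flow, and apply $\Psi_s$ simultaneously to all unprimed sets while leaving the primed partition fixed. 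Because $T_\rho 1_{\Omega_\ell'}$ is smooth, the change of variables formula yields $\frac{d}{ds}\big|_{s=0} s_{\ell'\ell}=\int_{\Omega_{\ell'}}\mathrm{div}(X\,T_\rho 1_{\Omega_\ell'}\,\gamma_{\adimn})\,dy$ for each $\ell'\in\{i,k\}$ and each $\ell$. Weighting by $2d_{\ell'\ell}(s_{\ell'\ell}-z_{\ell'\ell})$ and summing, then using $1_{\Omega_i}+1_{\Omega_k}\equiv 1$ on $\mathrm{supp}(X)$ together with the compact support of $X$ to kill a full-space divergence, the vanishing of the first variation at a local minimum produces
\begin{equation*}
(F_i - F_k)\,\gamma_{\adimn}\,\nabla 1_{\Omega_i} = 0 \qquad \text{on a neighborhood of } \partial^{*}\Omega_i \cap \partial^{*}\Omega_k,
\end{equation*}
and the symmetric identity $(F_i'-F_k')\,\gamma_{\adimn}\,\nabla 1_{\Omega_i'}=0$ near $\partial^{*}\Omega_i'\cap\partial^{*}\Omega_k'$ by varying the primed sets. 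In the notation of the preceding Example, $F_i - F_k = \sum_\ell u_{ik\ell}'\, T_\rho 1_{\Omega_\ell'}$ and $F_i' - F_k' = \sum_\ell u_{ik\ell}\, T_\rho 1_{\Omega_\ell}$.

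I would let $\Lambda \subset \partial\mathcal{S}_{\rho,m}(\adimn)$ be the set of matrices $M$ for which some partition $\{\Omega_\ell\},\{\Omega_\ell'\}$ with $C_\rho(1_{\Omega_1},\ldots,1_{\Omega_m'}) = M$ has $F_i - F_k \equiv 0$ or $F_i'-F_k' \equiv 0$ on $\R^{\adimn}$ for some $i \neq k$. The trivial source of such identical vanishing is $(u_{ik\ell}')_\ell$ or $(u_{ik\ell})_\ell$ being constant in $\ell$, which together with the hypothesis that $\{s_{ij}-z_{ij}\}$ are not all equal and the positivity of the $d_{ij}$ cuts out a positive-codimension affine set of matrix entries $(s_{ij})$; the remaining contributions come from coincidental linear dependencies among $T_\rho 1_{\Omega_1'},\ldots,T_\rho 1_{\Omega_m'}$ beyond the tautological $\sum_\ell T_\rho 1_{\Omega_\ell'} \equiv 1$, which is a non-generic condition in the $(m^2-1)$-dimensional set $\mathrm{Ext}\mathcal{S}_{\rho,m}(\adimn)$ and contributes a further measure-zero set. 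Thus $\Lambda$ has measure zero there, and for any partition whose image lies outside $\Lambda$, every $F_i - F_k$ and $F_i'-F_k'$ is a nontrivial real analytic function on $\R^{\adimn}$, since each $T_\rho 1_{\Omega_\ell'}$ is real analytic as the Gaussian convolution in \eqref{oudef}.

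For the closing step, the zero set of a nontrivial real analytic function on $\R^{\adimn}$ has real codimension at least one, and by Lojasiewicz's stratification theorem it decomposes in any bounded region into a finite union of real analytic (hence $C^\infty$) $\sdimn$-dimensional submanifolds with boundary, with locally finite $\sdimn$-dimensional Hausdorff measure. The distributional identity $(F_i - F_k)\gamma_{\adimn}\nabla 1_{\Omega_i} = 0$, combined with $\gamma_{\adimn}>0$ and the nontriviality of $F_i - F_k$, forces $\nabla 1_{\Omega_i}$ to be supported in $\{F_i = F_k\}$ on the interface with $\Omega_k$. Consequently $1_{\Omega_i}$ is locally constant in the complement of this variety, and $\Omega_i$ agrees up to a Lebesgue null set with a union of connected components of $\R^{\adimn}\setminus\{F_i=F_k\}$. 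This simultaneously yields locally finite surface area for $\Omega_i$ and the claimed local decomposition of $\partial\Omega_i$ as a finite union of smooth manifolds with boundary; the primed sets are handled by the symmetric identity. The principal obstacle is the absence of a priori perimeter control on the minimizer, which I would resolve structurally: a bounded function whose distributional gradient is supported in a Lebesgue-null real analytic hypersurface of locally finite Hausdorff measure is automatically locally constant on the complement, forcing its distributional gradient to be a locally finite Radon measure concentrated on that hypersurface.
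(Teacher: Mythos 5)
Your overall strategy tracks the paper's closely: derive an Euler--Lagrange identity from the first variation, show the interfaces are contained in the zero set of a nontrivial ``analytic'' function, and conclude local $C^\infty$ structure and finite perimeter. Your diffeomorphism-flow derivation of the identity is a legitimate alternative to the paper's density-point swap argument (both avoid any a priori perimeter bound), and your appeal to real analyticity of $T_\rho 1_\Omega$ together with Lojasiewicz stratification is a clean substitute for the paper's route through Chen's Hausdorff-dimension bound, Lin's strong unique continuation property for the heat equation, and the Hardt--Lin nodal-set structure theorem.

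The genuine gap is your measure-zero claim for $\Lambda$. You assert that the degenerate configurations --- $F_i - F_k \equiv 0$ or $F_i' - F_k' \equiv 0$ for some $i\neq k$ --- cut out a positive-codimension subset of $\mathrm{Ext}\,\mathcal{S}_{\rho,m}(\adimn)$ because the hypothesis that $\{s_{ij}-z_{ij}\}$ are not all equal excludes the trivial vanishing. But that hypothesis only rules out the entire matrix $R=(d_{ij}(s_{ij}-z_{ij}))$ being constant; it does \emph{not} rule out two rows or two columns of $R$ coinciding, which is precisely what makes $(u_{ik\ell}')_\ell$ or $(u_{ik\ell})_\ell$ vanish identically (note: ``constant in $\ell$'' should be ``all zero''; if $u_{ik\ell}'\equiv c\neq 0$ then $F_i-F_k\equiv c\neq 0$, which is harmless). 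Showing that this row/column-coincidence locus is thin in $\mathrm{Ext}\,\mathcal{S}_{\rho,m}(\adimn)$ is nontrivial: $Z$ depends on the boundary point (it is the center of an exterior tangent ball by Lemma~\ref{existlemn}), so one cannot simply count codimension in an ambient affine space of $(s_{ij})$. The paper's Step~4 handles exactly this by a perturbation-and-induction argument: one perturbs $\Omega_i$ into a neighboring cell and tracks how the tangent space to $\mathcal{S}_{\rho,m}(\adimn)$ moves, showing that any point where two rows (or columns) of $R$ agree is approachable from nearby points where they do not; iterating reduces the coincidence count to zero off a measure-zero set. Your proposal needs this argument or an equivalent replacement before the claim that $\Lambda$ is null can stand. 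A smaller note: your final ``locally constant off the variety implies BV'' reduction is correct in spirit, but you should phrase it via the standard criterion that a set whose topological boundary has locally finite $\mathcal{H}^{\sdimn}$-measure has locally finite perimeter, since a distribution supported on a null set need not a priori be a measure.
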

\begin{proof}

\textbf{Step 1}.  We begin with some a priori statements about the partitions.  Since $C_{\rho}(1_{\Omega_{1}},\ldots,1_{\Omega_{m}},1_{\Omega_{1}'},\ldots,1_{\Omega_{m}'})$ is in the boundary of $\mathcal{S}_{\rho,m}(\adimn)$, there exists $c,t\in\R$ and there exists some matrix $R\in\R^{m\times m}$ (a ``normal'' vector/matrix) that is normal to the all ones matrix, so that $\sum_{i,j=1}^{m}r_{ij}=0$, $R\neq0$, and such that, if $\epsilon>0$ is given, then every $f,g\in \mathcal{F}$ with $d_{\rho}(f,(1_{\Omega_{1}},\ldots,1_{\Omega_{m}}))<\epsilon$ and $d_{\rho}(g,(1_{\Omega_{1}'},\ldots,1_{\Omega_{m}'}))<\epsilon$ satisfies $\absf{\sum_{i,j=1}^{m}R_{ij} [C_{\rho}(f,g)]_{ij}}\leq c\epsilon^{2}$.  This matrix $R$ exists by \eqref{clineq}.

Note now that we could use $R\colonequals S-Z\neq0$ in Problem \ref{prob2} (recall that we choose $Z\neq S$, otherwise Problem \ref{prob2} becomes trivial since the sum is automatically zero), so the equation $\sum_{i,j=1}^{m}r_{ij}=0$ implies that there exists some $1\leq i,j,k\leq m$ such that $r_{ik}-r_{jk}\neq0$.  Define $u_{ijk}\colonequals d_{ki}(s_{ki}-z_{ki})-d_{kj}(s_{kj}-z_{kj})$, where $d_{ij},z_{ij}$ are defined in Problem \ref{prob2}, and $s_{ij}\colonequals\int_{\R^{\adimn}}1_{\Omega_{i}}(x)T_{\rho}1_{\Omega_{j}'}(x)\gamma_{\adimn}(x)\,\d x$, $\forall$ $1\leq i,j,k\leq m$, and define $u_{ijk}'\colonequals d_{ik}(s_{ik}-z_{ik})-d_{jk}(s_{jk}-z_{jk})$, $\forall$ $1\leq i,j,k\leq m$.  
We will particularly use $d_{ij}\colonequals 1$ for all $1\leq i,j\leq m$.

We have shown that $u_{ijk}'\neq0$ for some $1\leq i,j,k\leq m$.  Note that $u_{ijk}'=u_{i\ell k}'+u_{\ell jk}'=u_{i\ell k}'-u_{j\ell k}'$, for all $1\leq\ell\leq m$, so either $u_{i\ell k}'$ or $u_{j\ell k}'$ are nonzero, for all $1\leq \ell \leq m$.  In summary, there exists some $1\leq i,k\leq m$ such that $u_{ijk}'\neq0$ for all $1\leq j\leq m$.  (We mention in passing that we can similarly deduce that: there exists some $1\leq v,w\leq m$ such that $u_{vjw}\neq0$ for all $1\leq j\leq m$.)

\textbf{Step 2}.  From Step 1, fix $1\leq i,k\leq m$ such that $u_{ijk}\neq0$ for all $1\leq j\leq m$.  We now claim that there exist constants $(c_{ij})_{1\leq j\leq m}$ such that
\begin{equation}\label{zero8}
\Omega_{i}'\supset\Big\{x\in\R^{\adimn}\colon T_{\rho}\Big(\sum_{k=1}^{m}u_{ijk}1_{\Omega_{k}}\Big)(x)>c_{ij},\,\forall\,j\in\{1,\ldots,m\}\setminus\{i\}\Big\},
\end{equation}
Similarly, $\exists$ $(c_{ij}')_{1\leq i<j\leq m}$ such that
\begin{equation}\label{zero8p}
\Omega_{v}\supset\Big\{x\in\R^{\adimn}\colon T_{\rho}\Big(\sum_{k=1}^{m}u_{vjk}'1_{\Omega_{k}'}\Big)(x)>c_{ij}',\,\forall\,j\in\{1,\ldots,m\}\setminus\{i\}\Big\},
\end{equation}

By the Lebesgue density theorem \cite[1.2.1, Proposition 1]{stein70}, we may assume that, if $y\in \Omega_{i}$, then we have $\lim_{r\to0}\gamma_{\adimn}(\Omega_{i}\cap B(y,r))/\gamma_{\adimn}(B(y,r))=1$.

We prove \eqref{zero8} by contradiction.  Suppose there exist $c\in\R$, $i,j\in\{1,\ldots,m\}$ with $i\neq j$ and there exists $y\in\Omega_{i}'$ and $z\in\Omega_{j}'$ such that
$$T_{\rho}\Big(\sum_{k=1}^{m}u_{ijk}1_{\Omega_{k}}\Big)(y)<c,\qquad T_{\rho}\Big(\sum_{k=1}^{m}u_{ijk}1_{\Omega_{k}}\Big)(z)>c.$$
By \eqref{oudef}, $T_{\rho}(\sum_{k=1}^{m}u_{ijk}1_{\Omega_{k}})(x)$ is a continuous function of $x$.  And by the Lebsgue density theorem, there exist disjoint measurable sets $U_{j},U_{k}$ with positive Lebesgue measure such that $U_{i}\subset\Omega_{i}',U_{j}\subset\Omega_{j}'$ such that $\gamma_{\adimn}(U_{i})=\gamma_{\adimn}(U_{j})$ and such that
\begin{equation}\label{zero9.0}
T_{\rho}\Big(\sum_{k=1}^{m}u_{ijk}1_{\Omega_{k}}\Big)(y')<c,\,\,\forall\,y'\in U_{i},\qquad
T_{\rho}\Big(\sum_{k=1}^{m}u_{ijk}1_{\Omega_{k}}\Big)(y')>c,\,\,\forall\,y'\in U_{j}.
\end{equation}
We define a new partition of $\R^{\adimn}$ such that $\widetilde{\Omega}_{j}'\colonequals U_{i}\cup \Omega_{j}'\setminus U_{j}$, $\widetilde{\Omega}_{i}'\colonequals U_{j}\cup \Omega_{i}'\setminus U_{i}$, and $\widetilde{\Omega}_{k}'\colonequals\Omega_{k}'$ for all $k\in\{1,\ldots,m\}\setminus\{i,j\}$.  Then
\begin{flalign*}
&\sum_{i=1}^{m}\sum_{j=1}^{m}d_{ij}\Big(\int_{\R^{\adimn}}1_{\Omega_{i}}(x)T_{\rho}1_{\widetilde{\Omega}_{j}'}(x)\gamma_{\adimn}(x)\,\d x - z_{ij}\Big)^{2}\\
&\qquad\qquad\qquad\qquad\qquad-\sum_{i=1}^{m}\sum_{j=1}^{m}d_{ij}\Big(\int_{\R^{\adimn}}1_{\Omega_{i}}(x)T_{\rho}1_{\Omega_{j}'}(x)\gamma_{\adimn}(x)\,\d x - z_{ij}\Big)^{2}\\
&=\sum_{k=1}^{m}d_{ki}\Big(\int_{\R^{\adimn}}1_{\Omega_{k}}(x)T_{\rho}1_{\widetilde{\Omega}_{i}'}(x)\gamma_{\adimn}(x)\,\d x - z_{ki}\Big)^{2}\\
&\qquad\qquad\qquad\qquad\qquad+\sum_{k=1}^{m}d_{kj}\Big(\int_{\R^{\adimn}}1_{\Omega_{k}}(x)T_{\rho}1_{\widetilde{\Omega}_{j}'}(x)\gamma_{\adimn}(x)\,\d x - z_{kj}\Big)^{2}\\
&\qquad\qquad-\sum_{k=1}^{m}d_{ki}\Big(\int_{\R^{\adimn}}1_{\Omega_{k}}(x)T_{\rho}1_{\Omega_{i}'}(x)\gamma_{\adimn}(x)\,\d x - z_{ki}\Big)^{2}\\
&\qquad\qquad\qquad\qquad\qquad-\sum_{k=1}^{m}d_{kj}\Big(\int_{\R^{\adimn}}1_{\Omega_{k}}(x)T_{\rho}1_{\Omega_{j}'}(x)\gamma_{\adimn}(x)\,\d x - z_{kj}\Big)^{2}.
\end{flalign*}
Substituting the definitions of the sets $\widetilde{\Omega}_{i}'$ gives
\begin{flalign*}
&\sum_{k=1}^{m}d_{ki}\Big(\int_{\R^{\adimn}}1_{\Omega_{k}}(x)T_{\rho}[1_{\Omega_{i}'}+1_{U_{j}}-1_{U_{i}}](x)\gamma_{\adimn}(x)\,\d x - z_{ki}\Big)^{2}\\
&\qquad\qquad\qquad\qquad\qquad+\sum_{k=1}^{m}d_{kj}\Big(\int_{\R^{\adimn}}1_{\Omega_{k}}(x)T_{\rho}[1_{\Omega_{j}'}+1_{U_{i}}-1_{U_{j}}](x)\gamma_{\adimn}(x)\,\d x - z_{kj}\Big)^{2}\\
&\qquad\qquad-\sum_{k=1}^{m}d_{ki}\Big(\int_{\R^{\adimn}}1_{\Omega_{k}}(x)T_{\rho}(1_{\Omega_{i}'})(x)\gamma_{\adimn}(x)\,\d x - z_{ki}\Big)^{2}\\
&\qquad\qquad\qquad\qquad\qquad-\sum_{k=1}^{m}d_{kj}\Big(\int_{\R^{\adimn}}1_{\Omega_{k}}(x)T_{\rho}(1_{\Omega_{j}'})(x)\gamma_{\adimn}(x)\,\d x - z_{kj}\Big)^{2}.
\end{flalign*}
Cancelling some terms yields
\begin{flalign*}
&\sum_{k=1}^{m}d_{ki}2(s_{ki}-z_{ki})\Big(\int_{\R^{\adimn}}1_{\Omega_{k}}(x)T_{\rho}[1_{U_{j}}-1_{U_{i}}](x)\gamma_{\adimn}(x)\,\d x\Big)\\
&\qquad\qquad+d_{ki}\Big(\int_{\R^{\adimn}}1_{\Omega_{k}}(x)T_{\rho}[1_{U_{j}}-1_{U_{i}}](x)\gamma_{\adimn}(x)\,\d x \Big)^{2}\\
&\qquad+\sum_{k=1}^{m}d_{kj}2(s_{kj}-z_{kj})\Big(\int_{\R^{\adimn}}1_{\Omega_{k}}(x)T_{\rho}[1_{U_{i}}-1_{U_{j}}](x)\gamma_{\adimn}(x)\,\d x \Big)\\
&\qquad\qquad+d_{kj}\Big(\int_{\R^{\adimn}}1_{\Omega_{k}}(x)T_{\rho}[1_{U_{i}}-1_{U_{j}}](x)\gamma_{\adimn}(x)\,\d x \Big)^{2}.
\end{flalign*}
Rearranging gives
\begin{flalign*}
&\sum_{k=1}^{m}2u_{ijk}\Big(\int_{\R^{\adimn}}1_{\Omega_{k}}(x)T_{\rho}[1_{U_{j}}-1_{U_{i}}](x)\gamma_{\adimn}(x)\,\d x \Big)\\
&\qquad\qquad+d_{ki}\Big(\int_{\R^{\adimn}}1_{\Omega_{k}}(x)T_{\rho}[1_{U_{j}}-1_{U_{i}}](x)\gamma_{\adimn}(x)\,\d x \Big)^{2}\\
&\qquad\qquad+d_{kj}\Big(\int_{\R^{\adimn}}1_{\Omega_{k}}(x)T_{\rho}[1_{U_{i}}-1_{U_{j}}](x)\gamma_{\adimn}(x)\,\d x \Big)^{2}.
\end{flalign*}
Moving the sum inside the first integral, we get
\begin{flalign*}
&2\int_{\R^{\adimn}}[1_{U_{j}}-1_{U_{i}}](x)T_{\rho}(\sum_{k=1}^{m}u_{ijk}1_{\Omega_{k}})(x)\gamma_{\adimn}(x)\,\d x \\
&\qquad\qquad+\sum_{k=1}^{m}d_{ki}\Big(\int_{\R^{\adimn}}1_{\Omega_{k}}(x)T_{\rho}[1_{U_{j}}-1_{U_{i}}](x)\gamma_{\adimn}(x)\,\d x \Big)^{2}\\
&\qquad\qquad+\sum_{k=1}^{m}d_{kj}\Big(\int_{\R^{\adimn}}1_{\Omega_{k}}(x)T_{\rho}[1_{U_{i}}-1_{U_{j}}](x)\gamma_{\adimn}(x)\,\d x \Big)^{2}.
\end{flalign*}
Choosing $U_{i},U_{j}$ smaller if necessary, the first term is nonzero by \eqref{zero9.0} and it dominates the other two terms, contradicting the minimality of the sets.  We conclude that \eqref{zero8} holds.  (We also repeat the above argument with $\Omega_{1},\ldots,\Omega_{m}$ interchanged with $\Omega_{1}',\ldots,\Omega_{m}'$.)

\textbf{Step 3}.  We retain $1\leq i,k\leq m$ chosen from Step 1.  We then fix $1\leq i<j\leq m$ and we upgrade \eqref{zero8} by examining the level sets of
\begin{equation}\label{thisq}
T_{\rho}\Big(\sum_{k=1}^{m}u_{ijk}1_{\Omega_{k}}\Big)(x),\qquad\forall\,x\in\R^{\adimn}.
\end{equation}
We will for now assume that $\gamma_{\adimn}(\Omega_{i})>0$ and $\gamma_{\adimn}(\Omega_{i}')>0$ for all $1\leq i\leq m$.  Recall that, as shown in Step 2, the $i,k$ from Step 1 satisfies $u_{ijk}\neq0$ for all $1\leq j\leq m$, so the quantity \eqref{thisq} is nonconstant in $x$ since $\gamma_{\adimn}(\Omega_{i})>0$ and $\gamma_{\adimn}(\Omega_{i}')>0$ for all $1\leq i\leq m$.

For simplicity of notation we now denote $u_{k}\colonequals u_{ijk}$ for all $1\leq k\leq m$.  Fix $c\in\R$ and consider the level set
$$\Sigma\colonequals\Big\{x\in\R^{\adimn}\colon T_{\rho}\Big(\sum_{k=1}^{m}u_{k}1_{\Omega_{k}}\Big)(x)=c \Big\}.$$
This level set has Hausdorff dimension at most $\sdimn$ by \cite[Theorem 2.3]{chen98}.

From the Strong Unique Continuation Property for the heat equation \cite{lin90}, the function $T_{\rho}(\sum_{k=1}^{m}u_{k}1_{\Omega_{k}})(x)$ does not vanish to infinite order at any $x\in\R^{\adimn}$, so the argument of \cite[Lemma 1.9]{hardt89} (see \cite[Proposition 1.2]{lin94} and also \cite[Theorem 2.1]{chen98}) shows that in a neighborhood of each $x\in\Sigma$, $\Sigma$ can be written as a finite union of $C^{\infty}$ manifolds.  That is, there exists a neighborhood $U$ of $x$ and there exists an integer $\ell\geq1$ such that
$$U\cap\Sigma=\cup_{p=1}^{\ell}\Big\{y\in U\colon D^{p}\Big(\sum_{\ell=1}^{m}u_{\ell}1_{\Omega_{\ell}}\Big)(x)\neq 0,\,\, D^{q}T_{\rho}\Big(\sum_{\ell=1}^{m}u_{\ell}1_{\Omega_{\ell}}\Big)(x)=0,\,\,\forall\,1\leq q\leq p-1\Big\}.$$
Here $D^{p}$ denotes the array of all iterated partial derivatives of order $p\geq1$.  We therefore have
\begin{equation}\label{noreq}
\partial\Omega_{i}'\cap\partial\Omega_{j}'\subset\Big\{x\in\R^{\adimn}\colon T_{\rho}\Big(\sum_{k=1}^{m}u_{ijk}1_{\Omega_{k}}\Big)(x)=c_{ij} \Big\},\qquad\forall\,j\in\{1,\ldots,m\}\setminus\{i\},
\end{equation}
and the Lemma follows for the set $\Omega_{i}'$.  (By repeating all above steps, the Lemma also holds for $\Omega_{j}$ for some $1\leq j\leq m$.)

\textbf{Step 4}.  The Lemma holds for a single set $\Omega_{i}'$ and for another set $\Omega_{j}$.  It remains to extend the conclusion of the Lemma for all sets $\Omega_{1},\ldots,\Omega_{m},\Omega_{1}',\ldots,\Omega_{m}'$.  Let $v,w\in\{1,\ldots,m\}$ with $v\neq i$ and $w\neq j$ such that $\partial\Omega_{i} \cap \partial\Omega_{v}$ contains an open set in an $\sdimn$-dimensional $C^{\infty}$ manifold and such that $\partial\Omega_{j}' \cap \partial\Omega_{w}'$ contains an open set in an $\sdimn$-dimensional $C^{\infty}$ manifold.

A priori, it could occur that $u_{ivk}=0$ and $u_{jwk}'=0$ for all $1\leq k\leq m$, in which case the argument from Steps 2 and 3 does not work when we replace $\Omega_{j}'$ with $\Omega_{w}'$, or we replace  $\Omega_{i}$ with $\Omega_{v}$.   However, by perturbing $\Omega_{i}$ into $\Omega_{v}$, or perturbing $\Omega_{j}'$ into $\Omega_{w}'$, and viewing the resulting image under $C_{\rho}$ while keeping the other sets fixed, we see there exists a neighborhood $U$ of $C_{\rho}(1_{\Omega_{1}},\ldots,1_{\Omega_{m}},1_{\Omega_{1}'},\ldots,1_{\Omega_{m}'})$ such that either $a=b=m^{2}-2$ or $a>b$, where:
\begin{itemize}
\item We let $a$ be the dimension of the set of matrices in $\mathrm{Ext}\mathcal{S}_{\rho,m}(\adimn)$ where there exists a pair of partitions $\Theta_{1},\ldots,\Theta_{m},\Theta_{1}',\ldots,\Theta_{m}'$ of $\R^{\adimn}$ such that $u_{ivk}\neq0$ or $u_{jwk}'\neq0$, and
\item Let $b$ be the dimension of the set of matrices in $\mathrm{Ext}\mathcal{S}_{\rho,m}(\adimn)$ such that, for all pairs of partitions $\Theta_{1},\ldots,\Theta_{m},\Theta_{1}',\ldots,\Theta_{m}'$ of $\R^{\adimn}$, we have $u_{ivk}=0$ and $u_{jwk}'=0$.
\end{itemize}
That is, except for a set of measure zero in $\mathrm{Ext}\mathcal{S}_{\rho,m}(\adimn)$, we can then apply Steps 2 and 3 to either $\Omega_{v}$ or $\Omega_{w}'$.  Iterating this process one set at a time, we conclude that the Lemma holds for all of the sets $\Omega_{1},\ldots,\Omega_{m},\Omega_{1}',\ldots,\Omega_{m}'$, except for a set of measure zero in the boundary of $\mathcal{S}_{\rho,m}(\adimn)$.

(Note that perturbing e.g. $\Omega_{i}$ into $\Omega_{v}$ maintains the condition of being in $\mathrm{Ext}\mathcal{S}_{\rho,m}(\adimn)$ since $C_{\rho}$ is a bilinear function, i.e. it satisfies \eqref{clineq}.)

(Note also that $\mathcal{S}_{\rho,m}(\adimn)$ has Hausdorff dimension $m^{2}-1$ since it is a subset of $\R^{m\times m}$ satisfying a single linear equality, and the boundary of $\mathcal{S}_{\rho,m}(\adimn)$ has Hausdorff dimension $m^{2}-2$.)

We prove the above claim below.  Without loss of generality, by relabeling the sets, suppose $i=1,v=2$ and $j=1,w=2$.  Suppose we perturb $\Omega_{1}$ into $\Omega_{2}$, resulting in a new partition $\widetilde{\Omega}_{1},\widetilde{\Omega}_{2},\Omega_{3},\ldots,\Omega_{m}$.  Then the change under the image of $C_{\rho}$ is of the form
$$
\begin{array}{l}
C_{\rho}(1_{\Omega_{1}},\ldots,1_{\Omega_{m}},1_{\Omega_{1}'},\ldots,1_{\Omega_{m}'})
\\
\qquad-C_{\rho}(1_{\widetilde{\Omega}_{1}},1_{\widetilde{\Omega}_{2}},1_{\Omega_{3}},\ldots,1_{\Omega_{m}},1_{\Omega_{1}'},\ldots,1_{\Omega_{m}'})
\end{array}
=\begin{pmatrix} a_{1} & a_{2} & \cdots & a_{m}\\ -a_{1} & -a_{2} & \cdots & -a_{m}\\ 0 & 0 & \cdots & 0\\ \vdots & \vdots & \cdots & \vdots \\ 0 & 0 & \cdots & 0\end{pmatrix}.
$$
for some $a_{1},\ldots,a_{m}\in\R$ with $a_{1}\neq0$.  Moreover, if we choose the perturbation to be localized in an $\epsilon$-neighborhood of a point $x\in\partial\Omega_{1}\cap\partial\Omega_{2}\neq\emptyset$, then we have
\begin{flalign*}
&C_{\rho}(1_{\Omega_{1}},\ldots,1_{\Omega_{m}},1_{\Omega_{1}'},\ldots,1_{\Omega_{m}'})\\
&\qquad-C_{\rho}(1_{\widetilde{\Omega}_{1}},1_{\widetilde{\Omega}_{2}},1_{\Omega_{3}},\ldots,1_{\Omega_{m}},1_{\Omega_{1}'},\ldots,1_{\Omega_{m}'})\\
&\qquad\qquad\qquad\qquad\qquad\qquad
=\epsilon \begin{pmatrix}
T_{\rho}1_{\Omega_{1}'}(x) & T_{\rho}1_{\Omega_{2}'}(x) & \cdots & T_{\rho}1_{\Omega_{m}'}(x)\\
-T_{\rho}1_{\Omega_{1}'}(x) & -T_{\rho}1_{\Omega_{2}'}(x) & \cdots & -T_{\rho}1_{\Omega_{m}'}(x)\\
 0 & 0 & \cdots & 0\\
 \vdots & \vdots & \cdots & \vdots \\
 0 & 0 & \cdots & 0
 \end{pmatrix}
 +O(\epsilon^{2}).
\end{flalign*}

Recall that $R\in\R^{m\times m}$ is the normal vector in the sense that $\sum_{i,j=1}^{m}r_{ij}C_{\rho}(1_{\Omega_{1}},\ldots,1_{\Omega_{m}})_{ij}=0$ and $R\neq0$.  We now consider some constraints on what the normal vector $R$ can be, by finding some vectors perpendicular to $R$, i.e. in the tangent space $\mathrm{Tan}_{\rho}$ of $\mathcal{S}_{\rho,m}(\adimn)$ at $C_{\rho}(1_{\Omega_{1}},\ldots,1_{\Omega_{m}})$.  Letting $\epsilon\to0$ in the above expression and using the definition of $R$, we have
\begin{equation}\label{ten1}
\begin{aligned}
0&=
\sum_{i,j=1}^{m}r_{ij}\Big[C_{\rho}(1_{\Omega_{1}},\ldots,1_{\Omega_{m}})
-C_{\rho}(1_{\widetilde{\Omega}_{1}},1_{\widetilde{\Omega}_{2}},1_{\Omega_{3}},\ldots,1_{\Omega_{m}},1_{\Omega_{1}'},\ldots,1_{\Omega_{m}'})\Big]_{ij}\\
&=\sum_{j=1}^{m}[r_{1j}-r_{2j}]T_{\rho}1_{\Omega_{j}'}(x).
\end{aligned}
\end{equation}

Similarly, perturbing $\Omega_{1}$ into $\Omega_{2}$ and $\Omega_{1}'$ into $\Omega_{2}'$ simultaneously and using bilinearity of $C_{\rho}$ as in \eqref{clineq},
\begin{equation}\label{ten2}
\begin{aligned}
0&=
\sum_{i,j=1}^{m}r_{ij}\Big[C_{\rho}(1_{\Omega_{1}},\ldots,1_{\Omega_{m}})
-C_{\rho}(1_{\widetilde{\Omega}_{1}},1_{\widetilde{\Omega}_{2}},1_{\Omega_{3}},\ldots,1_{\Omega_{m}},1_{\widetilde{\Omega}_{1}'},1_{\widetilde{\Omega}_{2}'},1_{\Omega_{3}'},\ldots,1_{\Omega_{m}'})\Big]_{ij}\\
&=r_{11}+r_{22}-r_{12}-r_{21}.
\end{aligned}
\end{equation}
Note: perturbing two sets as in \eqref{ten2} might exit the tangent space $\mathrm{Tan}_{\rho}$ by \eqref{clineq}, since there is a second order (quadratic) term in \eqref{clineq}.  So \eqref{ten2} only holds in the case that $\mathrm{Tan}_{\rho}$ remains unchanged by perturbing $\Omega_{1}$ into $\Omega_{2}$ and $\Omega_{1}'$ into $\Omega_{2}'$.

We wish to extend these observations to the other sets as well.  The boundary between two of the sets might not be a piecewise smooth manifold, but we can still use the same construction above using points of density of the sets.  By the Lebesgue density theorem \cite[1.2.1, Proposition 1]{stein70}, by making measure zero changes to the sets, we may assume that, if $y\in \Omega_{i}$, then we have $\lim_{r\to0}\gamma_{\adimn}(\Omega_{i}\cap B(y,r))/\gamma_{\adimn}(B(y,r))=1$, for all $1\leq i\leq m$, and similarly for $\Omega_{1}',\ldots,\Omega_{m}'$.  Suppose $s>0$ and $1\leq i<v\leq m$ satisfy
$$\inf_{x\in\Omega_{i},y\in\Omega_{v}\colon\vnorm{x},\vnorm{y}\leq s}\vnorm{x-y}=0.$$
(If this infimum is nonzero for all $s>0$, then $\Omega_{i}$ and $\Omega_{v}$ have no common boundary.)  Let $x_{1},x_{2},\ldots\in\Omega_{i}$ and $y_{1},y_{2},\ldots,\in\Omega_{v}$ such that $\vnorm{x_{k}},\vnorm{y_{k}}\leq s$ $\forall$ $k\geq1$, $\lim_{k\to\infty}\vnorm{x_{k}-y_{k}}=0$, and $s_{1},s_{2},\ldots>0$ with $\lim_{k\to\infty}s_{k}=0$ such that
$$\lim_{k\to\infty}\frac{\gamma_{\adimn}(\Omega_{i}\cap B(x_{k},s_{k}))}{\gamma_{\adimn}(B(x_{k},s_{k}))}
=\lim_{k\to\infty}\frac{\gamma_{\adimn}(\Omega_{v}\cap B(x_{k},s_{k})))}{\gamma_{\adimn}(B(y_{k},s_{k}))}
=1.$$

Then we can use the sequence of balls $\{B(x_{k},s_{k})\}_{k=1}^{\infty},\{B(y_{k},s_{k})\}_{k=1}^{\infty}$ to perturb $\Omega_{i}$ into $\Omega_{v}$ (and $\Omega_{j}'$ into $\Omega_{w}'$) and conclude that \eqref{ten2} holds for any $1\leq i<v\leq m$ and $1\leq j<w\leq m$ such that there exists $s>0$ such that
$$\inf_{x\in\Omega_{i},y\in\Omega_{v}\colon\vnorm{x},\vnorm{y}\leq s}\vnorm{x-y}=0,\qquad
\inf_{x\in\Omega_{j}',y\in\Omega_{w}'\colon\vnorm{x},\vnorm{y}\leq s}\vnorm{x-y}=0.$$
That is, for any $k\geq1$, we consider the perturbation of the sets such that $\widetilde{\Omega}_{i}\colonequals\Omega_{i}\cup B(y_{k},s_{k})$, $\widetilde{\Omega}_{v}\colonequals\Omega_{v}\setminus B(y_{k},s_{k})$, and we also consider the perturbation of the sets such that $\widetilde{\Omega}_{i}\colonequals\Omega_{i}\setminus B(x_{k},s_{k})$, $\widetilde{\Omega}_{v}\colonequals\Omega_{v}\cup B(y_{k},s_{k})$.  We then let $k\to\infty$ and note that, crucially both of these perturbations result in the same first derivative of $C_{\rho}$ as $k\to\infty$, otherwise we would not be able to assert that these perturbations remain in the tangent space $\mathrm{Tan}_{\rho}$.  Performing a similar procedure for $\Omega_{j}'$ and $\Omega_{w}'$, we conclude that \eqref{ten2} holds, i.e. under these assumptions, we have
\begin{equation}\label{ten3}
0=r_{ij}+r_{vw}-r_{iw}-r_{jv}.
\end{equation}
And similarly
\begin{equation}\label{ten4}
0=\sum_{k=1}^{m}[r_{ik}-r_{vk}]T_{\rho}1_{\Omega_{k}'}(x).
\end{equation}

Recall that our main concern is e.g. if there exists $1\leq i<j\leq m$ such that $u_{ijk}=0$ for all $1\leq k\leq m$, i.e. if two columns of $R$ are identical.  Similarly, if there exists $1\leq i<j\leq m$ such that $u_{ijk}'=0$ for all $1\leq k\leq m$, then two rows of $R$ are identical.  Let $n_{r}$ be the number of integers $1\leq i\leq m$ such that row $i$ of $R$ has another row of $R$ identical to row $i$.  Let $n_{c}$ be the number of integers $1\leq i\leq m$ such that column $i$ of $R$ has another column of $R$ identical to column $i$.  We will show that $n_{r}=n_{c}=0$ by induction on $n_{r}+n_{c}$.  For the base case, note that it cannot occur that $n_{r}=n_{c}=m$ since if all rows and columns are identical, this contradicts $R\neq0$ and $\sum_{i,j=1}^{m}r_{ij}=0$.  Now, if we have found that $0<n_{r}+n_{c}<2m$, then we can find e.g. two duplicate rows $r_{j}$ and $r_{w}$ of $R$.  Then moving $\Omega_{i}$ into $\Omega_{v}$ and $\Omega_{j}'$ into $\Omega_{x}'$ as described above with $x\neq w$ in \eqref{ten3}, shows that, in a neighborhood of the original sets, we have that $r_{j}\neq r_{w}$, thereby completing the inductive step.

We therefore conclude that $R$ has no identical rows or columns (except perhaps on a set of measure zero).  Consequently, except for a set of measure zero, we have: for all $1\leq i,j\leq m$, there exists $1\leq k\leq m$ such that $u_{ijk}\neq0$.  Similarly, for all $1\leq i,j\leq m$, there exists $1\leq k\leq m$ such that $u_{ijk}'\neq0$.  That is, Steps 2 and 3 apply to all of the set $\Omega_{1},\ldots,\Omega_{m},\Omega_{1}',\ldots,\Omega_{m}'$.

\textbf{Final Remark}.  In the above proof, in Step 3, we assumed that $\gamma_{\adimn}(\Omega_{i})>0$ and $\gamma_{\adimn}(\Omega_{i}')>0$ for all $1\leq i\leq m$.  To handle the case that e.g. $\gamma_{\adimn}(\Omega_{m})=0$, we can just assert that $r_{mj}=0$ for all $1\leq j\leq m$ in Step 1, and then the remaining parts of the proof go through unchanged.  Alternatively, we could note that such sets contribute measure zero to $\mathrm{Ext}\mathcal{S}_{\rho,m}(\adimn)$.
\end{proof}
\begin{remark}\label{uirk}
In the course of the proof of Lemma \ref{reglem}, we showed that, except for a set of measure zero in $\mathrm{Ext}\mathcal{S}_{\rho,m}(\adimn)$, we have: for all $1\leq i,j\leq m$, there exists $1\leq k\leq m$ such that $u_{ijk}\neq0$.  Similarly, for all $1\leq i,j\leq m$, there exists $1\leq k\leq m$ such that $u_{ijk}'\neq0$.
\end{remark}

From Lemma \ref{reglem} and Definition \ref{rbdef}, for all $1\leq i<j\leq m$, if $x\in\Sigma_{ij}'\colonequals (\redb\Omega_{i}')\cap(\redb\Omega_{j}')$, then the unit normal vector $N_{ij}'(x)\in\R^{\adimn}$ that points from $\Omega_{i}'$ into $\Omega_{j}'$ is well-defined on $\Sigma_{ij}'$, and $\big((\partial\Omega_{i}')\cap(\partial\Omega_{j}')\big)\setminus\Sigma_{ij}$ has Hausdorff dimension at most $\sdimn-1$ by \eqref{noreq}.  Similarly, for all $1\leq i<j\leq m$, if $x\in\Sigma_{ij}\colonequals (\redb\Omega_{i})\cap(\redb\Omega_{j})$, then the unit normal vector $N_{ij}(x)\in\R^{\adimn}$ that points from $\Omega_{i}$ into $\Omega_{j}$ is well-defined on $\Sigma_{ij}$, $\big((\partial\Omega_{i})\cap(\partial\Omega_{j})\big)\setminus\Sigma_{ij}$ has Hausdorff dimension at most $\sdimn-1$, and
\begin{equation}\label{zero11}
\begin{aligned}
N_{ij}'(x)&=\pm\frac{\overline{\nabla} T_{\rho}(\sum_{k=1}^{m}u_{ijk}1_{\Omega_{k}})(x)}{\vnormf{\overline{\nabla} T_{\rho}(\sum_{k=1}^{m}u_{ijk}1_{\Omega_{k}})(x)}},\qquad\forall\,x\in\Sigma_{ij}'.\\
N_{ij}(x)&=\pm\frac{\overline{\nabla} T_{\rho}(\sum_{k=1}^{m}u_{ijk}'1_{\Omega_{k}'})(x)}{\vnormf{\overline{\nabla} T_{\rho}(\sum_{k=1}^{m}u_{ijk}'1_{\Omega_{k}'})(x)}},\qquad\forall\,x\in\Sigma_{ij}.
\end{aligned}
\end{equation}
In Lemma \ref{lemma7rn} below we will show that the positive sign holds in \eqref{zero11} for sets minimizing Problem \ref{prob2}.

\section{First and Second Variation}

In this section, we recall some standard facts for variations of sets with respect to the Gaussian measure.  Here is a summary of notation.

\textbf{Summary of Notation}.
\begin{itemize}
\item $T_{\rho}$ denotes the Ornstein-Uhlenbeck operator with correlation parameter $\rho\in(-1,1)$, as defined in \eqref{oudef}.
\item $\Omega_{1},\ldots,\Omega_{m}$ denotes a partition of $\R^{\adimn}$ into $m$ disjoint measurable sets.
\item $\Omega_{1}',\ldots,\Omega_{m}'$ denotes a partition of $\R^{\adimn}$ into $m$ disjoint measurable sets.
\item $\redb\Omega$ denotes the reduced boundary of $\Omega\subset\R^{\adimn}$, from Definition \ref{rbdef}.
\item $\Sigma_{ij}\colonequals(\redb\Omega_{i})\cap(\redb\Omega_{j})$ and $\Sigma_{ij}'\colonequals(\redb\Omega_{i}')\cap(\redb\Omega_{j}')$ for all $1\leq i,j\leq m$.
\item $N_{ij}(x)$ is the unit normal vector to $x\in\Sigma_{ij}$ that points from $\Omega_{i}$ into $\Omega_{j}$, so $N_{ij}=-N_{ji}$.
\item $N_{ij}'(x)$ is the unit normal vector to $x\in\Sigma_{ij}'$ that points from $\Omega_{i}'$ into $\Omega_{j}'$, so $N_{ij}'=-N_{ji}'$.
\item $Z=(z_{ij})_{1\leq i,j\leq m}$ denotes a real $m\times m$ matrix, used in Problem \ref{prob2}.
\item $s_{ij}\colonequals\int_{\R^{\adimn}}1_{\Omega_{i}}(x)T_{\rho}1_{\Omega_{j}'}(x)\gamma_{\adimn}(x)\,\d x$, $\forall$ $1\leq i,j\leq m$.
\item $d_{ij}>0$ $\forall$ $1\leq i,j\leq m$ are constants defined in Problem \ref{prob2}.
\item $u_{ijk}=d_{ki}(s_{ki}-z_{ki})-d_{kj}(s_{kj}-z_{kj})$, $\forall$ $1\leq i,j,k\leq m$
\item $u_{ijk}'= d_{ik}(s_{ik}-z_{ik})-d_{jk}(s_{jk}-z_{jk})$, $\forall$ $1\leq i,j,k\leq m$.
\end{itemize}
Throughout the paper, unless otherwise stated, we define $G\colon\R^{\adimn}\times\R^{\adimn}\to\R$ to be the following function.  For all $x,y\in\R^{\adimn}$, $\forall$ $\rho\in(-1,1)$, define
\begin{equation}\label{gdef}
\begin{aligned}
G(x,y)&=(1-\rho^{2})^{-(\adimn)/2}(2\pi)^{-(\adimn)}e^{\frac{-\|x\|^{2}-\|y\|^{2}+2\rho\langle x,y\rangle}{2(1-\rho^{2})}}\\
&=(1-\rho^{2})^{-(\adimn)/2}\gamma_{\adimn}(x)\gamma_{\adimn}(y)e^{\frac{-\rho^{2}(\|x\|^{2}+\|y\|^{2})+2\rho\langle x,y\rangle}{2(1-\rho^{2})}}\\
&=(1-\rho^{2})^{-(\adimn)/2}(2\pi)^{-(\adimn)/2}\gamma_{\adimn}(x)e^{\frac{-\vnorm{y-\rho x}^{2}}{2(1-\rho^{2})}}.
\end{aligned}
\end{equation}

We can then rewrite the noise stability from Equation \ref{oudef} as
$$\int_{\R^{\adimn}}1_{\Omega}(x)T_{\rho}1_{\Omega}(x)\gamma_{\adimn}(x)\,\d x
=\int_{\Omega}\int_{\Omega}G(x,y)\,\d x\d y.$$
Our first and second variation formulas for the noise stability will be written in terms of $G$.

\begin{lemma}[\embolden{The First Variation}\,{\cite{chokski07}}; also {\cite[Lemma 3.1, Equation (7)]{heilman14}}]\label{latelemma3}
Let $X\in C_{0}^{\infty}(\R^{\adimn},\R^{\adimn})$.  Let $\Omega\subset\R^{\adimn}$ be a measurable set such that $\partial\Omega$ is a locally finite union of $C^{\infty}$ manifolds.  Let $\{\Omega^{(s)}\}_{s\in(-1,1)}$ be the corresponding variation of $\Omega$.  Then
\begin{equation}\label{Bone6}
\frac{\d}{\d s}\Big|_{s=0}\int_{\R^{\adimn}} 1_{\Omega^{(s)}}(y)G(x,y)\,\d y
=\int_{\partial \Omega}G(x,y)\langle X(y),N(y)\rangle \,\d y.
\end{equation}
\end{lemma}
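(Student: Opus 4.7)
The plan is to reduce this to a standard divergence theorem calculation after pulling back the integral from $\Omega^{(s)}$ to $\Omega$ via the flow $\Psi(\cdot,s)$ defined in \eqref{nine2.3}. For fixed $x$, write $F(y) \colonequals G(x,y)$, so we want to show
$$\frac{\d}{\d s}\Big|_{s=0}\int_{\Omega^{(s)}}F(y)\,\d y = \int_{\partial\Omega}F(y)\langle X(y),N(y)\rangle\,\d y.$$
Since $X\in C_{0}^{\infty}(\R^{\adimn},\R^{\adimn})$, the flow map $y\mapsto\Psi(y,s)$ is a $C^{\infty}$ diffeomorphism of $\R^{\adimn}$ for each $s\in(-1,1)$, which equals the identity outside a compact set. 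Change of variables then gives
$$\int_{\Omega^{(s)}}F(y)\,\d y = \int_{\Omega}F(\Psi(y,s))\,\abs{\det D_{y}\Psi(y,s)}\,\d y.$$

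Next, I would differentiate under the integral sign. Two standard facts about the flow are needed: $\frac{\d}{\d s}\big|_{s=0}\Psi(y,s)=X(y)$ and $\frac{\d}{\d s}\big|_{s=0}\abs{\det D_{y}\Psi(y,s)}=\mathrm{div}\,X(y)$ (this follows from Jacobi's formula $\frac{\d}{\d s}\det A(s)=\det A(s)\,\mathrm{tr}(A(s)^{-1}A'(s))$ applied at $s=0$ where $A(0)=I$ and $A'(0)=DX(y)$). Since $F$ is smooth on $\R^{\adimn}$ and $X$ is compactly supported, these derivatives are uniformly controlled, which justifies differentiating under the integral. The result is
$$\frac{\d}{\d s}\Big|_{s=0}\int_{\Omega^{(s)}}F(y)\,\d y
=\int_{\Omega}\bigl[\langle\nabla F(y),X(y)\rangle + F(y)\,\mathrm{div}\,X(y)\bigr]\,\d y
=\int_{\Omega}\mathrm{div}(F\,X)(y)\,\d y.$$

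Finally, I would invoke the divergence theorem on $\Omega$. Under the hypothesis that $\partial\Omega$ is a locally finite union of $C^{\infty}$ manifolds, and since $F\,X$ is smooth and compactly supported, this yields
$$\int_{\Omega}\mathrm{div}(F\,X)(y)\,\d y=\int_{\partial\Omega}F(y)\langle X(y),N(y)\rangle\,\d y,$$
which is exactly \eqref{Bone6}.

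The only point requiring mild care is the last step: $\partial\Omega$ is only locally a finite union of smooth manifolds with boundary rather than a single closed smooth hypersurface, so one cannot quote the most naive form of Stokes' theorem. However, since $X$ is compactly supported and the singular stratum (where the smooth pieces meet) has Hausdorff dimension at most $\sdimn-1$, it has zero $\sdimn$-dimensional Hausdorff measure, so it contributes nothing to the boundary integral; applying the divergence theorem piece-by-piece on each smooth stratum and summing gives the claim. Equivalently, one can appeal to the De Giorgi--Federer divergence theorem for sets of locally finite perimeter, which applies to $\Omega$ and replaces $\partial\Omega$ with the reduced boundary $\redb\Omega$; these agree up to an $\sdimn$-null set by the regularity hypothesis. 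This is the main (but standard) technical obstacle, and it is the reason the lemma is stated under the regularity conclusion furnished by Lemma \ref{reglem}.
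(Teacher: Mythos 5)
Your proof is correct and is the standard argument for this first variation formula: pull back by the flow, apply Jacobi's formula and differentiation under the integral sign, and finish with the divergence theorem. The paper itself does not prove Lemma \ref{latelemma3}; it cites it from \cite{chokski07} and \cite[Lemma 3.1, Eq.\ (7)]{heilman14}, and the argument in those references is essentially the one you give. Your closing paragraph correctly identifies the only genuine technical subtlety (the replacement of $\partial\Omega$ by the reduced boundary $\redb\Omega$ via the De Giorgi--Federer theorem, which the paper addresses by noting $\partial\Omega$ and $\redb\Omega$ agree up to an $\mathcal{H}^{\sdimn}$-null set under the regularity hypothesis), so nothing is missing.
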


The following Lemma is a consequence of \eqref{Bone6} and Lemma \ref{reglem}.

Let $X,X'\in C_{0}^{\infty}(\R^{\adimn},\R^{\adimn})$.  Let $\{\Omega_{i}^{(s)}\}_{s\in(-1,1)}$ be the variation of $\Omega_{i}$ corresponding to $X$ for all $1\leq i\leq m$.  Let $\{\Omega_{i}^{'(s)}\}_{s\in(-1,1)}$ be the variation of $\Omega_{i}'$ corresponding to $X'$ for all $1\leq i\leq m$.  Denote $f_{ij}(x)\colonequals\langle X(x),N_{ij}(x)\rangle$ for all $x\in\Sigma_{ij}$ and $f_{ij}'(x)\colonequals\langle X'(x),N_{ij}'(x)\rangle$ for all $x\in\Sigma_{ij}'$.  We let $N$ denote the exterior pointing unit normal vector to $\redb\Omega_{i}$ for any $1\leq i\leq m$ and we let $N'$ denote the exterior pointing unit normal vector to $\redb\Omega_{i}'$ for any $1\leq i\leq m$.

\begin{lemma}[\embolden{The First Variation for Minimizers}]\label{firstvarmaxns}
Suppose $\Omega_{1},\ldots,\Omega_{m},\Omega_{1}',\ldots,\Omega_{m}'\subset\R^{\adimn}$ minimize Problem \ref{prob2}.  Then for all $1\leq i<j\leq m$, there exists $c_{ij},c_{ij}'\in\R$ such that
$$T_{\rho}\Big(\sum_{k=1}^{m}u_{ijk}'1_{\Omega_{k}'}\Big)(x)=c_{ij}',\qquad\forall\,x\in\Sigma_{ij}.$$
$$T_{\rho}\Big(\sum_{k=1}^{m}u_{ijk}1_{\Omega_{k}}\Big)(x)=c_{ij},\qquad\forall\,x\in\Sigma_{ij}'.$$
\end{lemma}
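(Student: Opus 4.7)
My plan is a direct first-variation argument at the minimizer. Fix $1 \leq i < j \leq m$. By Lemma \ref{reglem}, after discarding the measure-zero exceptional set of $\mathrm{Ext}\mathcal{S}_{\rho,m}(\adimn)$, each $\Sigma_{ij}'$ and each $\Sigma_{ij}$ is locally a finite union of $C^{\infty}$ $\sdimn$-dimensional manifolds, with triple-junction set of Hausdorff codimension at least one, so the first-variation formula of Lemma \ref{latelemma3} applies. I prove the identity on $\Sigma_{ij}'$ first; the one on $\Sigma_{ij}$ follows by an identical argument with the roles of the two partitions exchanged.

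Pick any point $y_0$ in the smooth open portion of $\Sigma_{ij}'$ and choose a vector field $X' \in C_0^{\infty}(\R^{\adimn},\R^{\adimn})$ whose support is a small tubular neighborhood of $y_0$ that meets no other $\Sigma_{k\ell}'$. The flow $\Psi(\cdot,s)$ generated by $X'$ as in \eqref{nine2.3} gives a variation in which only $\Omega_i'$ and $\Omega_j'$ change and which preserves the partition property. Writing $f(y)\colonequals \langle X'(y),N_{ij}'(y)\rangle$ on $\Sigma_{ij}'$ (where $N_{ij}'$ is outward to $\Omega_i'$ and inward to $\Omega_j'$), Lemma \ref{latelemma3} combined with the self-adjointness identity $\int 1_{\Omega_k}\,T_{\rho} h\,\gamma_{\adimn} = \int h\, T_{\rho}1_{\Omega_k}\,\gamma_{\adimn}$ (immediate from the symmetry of $G$) yields
$$\tfrac{\d}{\d s}\Big|_{s=0} s_{ki}^{(s)} = \int_{\Sigma_{ij}'} T_{\rho}1_{\Omega_k}(y)\,f(y)\,\gamma_{\adimn}(y)\,\d y = -\tfrac{\d}{\d s}\Big|_{s=0} s_{kj}^{(s)},$$
with $\tfrac{\d}{\d s}|_{s=0} s_{k\ell}^{(s)}=0$ for $\ell\notin\{i,j\}$. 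Differentiating the objective $F \colonequals \sum_{k,\ell} d_{k\ell}(s_{k\ell}-z_{k\ell})^{2}$ via the chain rule and regrouping the $k$-sum using $u_{ijk}=d_{ki}(s_{ki}-z_{ki})-d_{kj}(s_{kj}-z_{kj})$ collapses everything into a single integral:
$$\tfrac{\d}{\d s}\Big|_{s=0} F = 2\int_{\Sigma_{ij}'} T_{\rho}\Big(\sum_{k=1}^{m} u_{ijk} 1_{\Omega_{k}}\Big)(y)\,f(y)\,\gamma_{\adimn}(y)\,\d y.$$

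At a local minimum this derivative must vanish for every admissible $X'$. As we vary $X'$, the normal trace $f$ can be made equal to any smooth compactly supported function on any smooth open subset of $\Sigma_{ij}'$, so the fundamental lemma of the calculus of variations—together with $\gamma_{\adimn}>0$ and the continuity of $T_{\rho}$ applied to a bounded function (which upgrades the pointwise conclusion from the smooth part to all of $\Sigma_{ij}'$)—forces
$$T_{\rho}\Big(\sum_{k=1}^{m} u_{ijk} 1_{\Omega_{k}}\Big)(x)=0,\qquad\forall\,x\in\Sigma_{ij}',$$
establishing the second claim with $c_{ij}=0$. The mirror-image argument, using a vector field supported near a smooth point of $\Sigma_{ij}$ that moves only $\Omega_i$ and $\Omega_j$, and using $u_{ijk}'=d_{ik}(s_{ik}-z_{ik})-d_{jk}(s_{jk}-z_{jk})$ in place of $u_{ijk}$, gives $T_{\rho}(\sum_{k=1}^{m} u_{ijk}' 1_{\Omega_{k}'})\equiv c_{ij}'=0$ on $\Sigma_{ij}$. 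The only nontrivial step is admissibility of the localized variation, which is guaranteed because the triple-junction set on $\Sigma_{ij}'$ has Hausdorff codimension at least one by Lemma \ref{reglem}, so away from it the vector field $X'$ can prescribe $f$ freely without disturbing any of the other sets $\Omega_{\ell}'$.
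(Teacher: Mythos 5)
Your proposal follows the same strategy as the paper: localize a smooth vector field near an interface, invoke Lemma \ref{latelemma3} and self-adjointness of $T_{\rho}$, collect the $k$-sum into $T_{\rho}(\sum_k u_{ijk} 1_{\Omega_k})$, and invoke the vanishing of the first variation together with the fundamental lemma of the calculus of variations. Your derivation of $\tfrac{\d}{\d s}|_{s=0}F=2\int_{\Sigma_{ij}'}T_{\rho}(\sum_k u_{ijk}1_{\Omega_k})\,f\,\gamma_{\adimn}$ matches the paper's computation verbatim (modulo which of the two partitions is being varied).

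There is one genuine difference worth flagging. You conclude $c_{ij}=0$, whereas the paper's proof only concludes constancy: the paper restricts to test functions $f_{ij}$ with $\int_{\redb\Omega_{i'}}f_{ij}\,\gamma_{\adimn}\,\d x=0$ for all $i'$, i.e.\ Gaussian-volume-preserving variations, so the conclusion it extracts is $T_{\rho}(\sum_k u_{ijk}'1_{\Omega_k'})\perp\mathrm{constants}$ on $\Sigma_{ij}$, hence merely that it is a constant. Problem \ref{prob2} as stated has no volume constraint, so your unconstrained variation is admissible (the flow by a compactly supported field stays in the weak-topology neighborhood $U$ of Lemma \ref{existlemn}), and the stronger conclusion $c_{ij}=0$ does appear to be correct. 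It is also strictly more than the downstream lemmas use — Lemmas \ref{lemma7rn} and \ref{treig2} only take the tangential gradient of the first-variation identity, so only constancy matters there. So your proof is valid and slightly sharper; but since you are departing from the paper's statement, you should at least note explicitly that you allow non-volume-preserving test fields, since that is the point at which your conclusion strengthens the paper's.

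One small caution in your write-up: you should make sure your "tubular neighborhood of a smooth point $y_0$ that meets no other $\Sigma_{k\ell}'$" avoids not only the other interfaces but also the (codimension-$\geq 1$) junction set $\partial\partial$ of $\Sigma_{ij}'$ itself, and avoids $\cup_i\partial\Omega_i$ on the unprimed side, so that Lemma \ref{latelemma3} applies cleanly and only $s_{ki},s_{kj}$ move. This is exactly what Lemma \ref{reglem} licenses, and you mention the junction set, so this is a matter of wording rather than substance.
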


\begin{proof}
Fix $1\leq i<j\leq m$ and denote $f_{ij}(x)\colonequals\langle X(x),N_{ij}(x)\rangle$ for all $x\in\Sigma_{ij}$.  From Lemma \ref{latelemma3}, if $X$ is nonzero outside of $\Sigma_{ij}$, we get
\begin{flalign*}
&\frac{1}{2}\frac{\d}{\d s}\Big|_{s=0}\sum_{i,k=1}^{m}d_{ik}\Big(\int_{\R^{\adimn}}1_{\Omega_{i}^{(s)}}(x)T_{\rho}1_{\Omega_{k}^{(s)'}}(x)\gamma_{\adimn}(x)\,\d x - z_{ik}\Big)^{2}\\
&\quad=\int_{\Sigma_{ij}}\langle X(x),N_{ij}(x)\rangle\sum_{k=1}^{m}d_{ik}\int_{\Omega_{k}'}G(x,y)\,\d y \d x\Big(\int_{\R^{\adimn}}1_{\Omega_{i}}(x)T_{\rho}1_{\Omega_{k}'}(x)\gamma_{\adimn}(x)\,\d x - z_{ik}\Big)\\
&\quad\quad+\int_{\Sigma_{ij}}\langle X(x),N_{ji}(x)\rangle\sum_{k=1}^{m}d_{jk}\int_{\Omega_{k}'}G(x,y)\,\d y \d x\Big(\int_{\R^{\adimn}}1_{\Omega_{j}}(x)T_{\rho}1_{\Omega_{k}'}(x)\gamma_{\adimn}(x)\,\d x - z_{jk}\Big)\\
&\quad\stackrel{\eqref{oudef}\wedge\eqref{gdef}}{=}\int_{\Sigma_{ij}}T_{\rho}\Big(\sum_{k=1}^{m}(d_{ik}(s_{ik}-z_{ik})-d_{jk}(s_{jk}-z_{jk}))1_{\Omega_{k}'}\Big)(x)f_{ij}(x)\,\d x.
\end{flalign*}
We used above $N_{ij}=-N_{ji}$.  If $T_{\rho}\big(\sum_{k=1}^{m}u_{ijk}'1_{\Omega_{k}'}\big)(x)$ is nonconstant, then we can construct $f_{ij}$ supported in $\Sigma_{ij}$ with $\int_{\redb\Omega_{i'}}f_{ij}(x)\gamma_{\adimn}(x)dx=0$ for all $1\leq i'\leq m$ to give a nonzero derivative, contradicting the maximality of $\Omega_{1},\ldots,\Omega_{m}$ (as in Lemma \ref{reglem} and \eqref{zero9.0}).  The analogous condition applies to $\Omega_{1}',\ldots,\Omega_{m}'$ by symmetry.
\end{proof}

\begin{theorem}[\embolden{General Second Variation Formula}, {\cite[Theorem 2.6]{chokski07}}; also {\cite[Theorem 1.10]{heilman15}}]\label{thm4}
Let $X\in C_{0}^{\infty}(\R^{\adimn},\R^{\adimn})$.  Let $\Omega\subset\R^{\adimn}$  be a measurable set such that $\partial\Omega$ is a locally finite union of $C^{\infty}$ manifolds.  Let $\{\Omega^{(s)}\}_{s\in(-1,1)}$ be the corresponding variation of $\Omega$.  Define $V$ as in \eqref{two9c}.  Then
\begin{flalign*}
&\frac{1}{2}\frac{\d^{2}}{\d s^{2}}\Big|_{s=0}\int_{\R^{\adimn}} \int_{\R^{\adimn}} 1_{\Omega^{(s)}}(y)G(x,y) 1_{\Omega^{(s)}}(x)\,\d x\d y\\
&\quad=\int_{\redA}\int_{\redA}G(x,y)\langle X(x),N(x)\rangle\langle X(y),N(y)\rangle \,\d x\d y
+\int_{\redA}\mathrm{div}(V(x,0)X(x))\langle X(x),N(x)\rangle \,\d x.
\end{flalign*}

\end{theorem}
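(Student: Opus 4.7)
The plan is to differentiate twice, peeling off one variation at a time using the first variation formula (Lemma~\ref{latelemma3}) together with Reynolds transport and the divergence theorem. Let
$$I(s)\colonequals\int_{\R^{\adimn}}\int_{\R^{\adimn}}1_{\Omega^{(s)}}(x)G(x,y)1_{\Omega^{(s)}}(y)\,\d x\,\d y=\int_{\Omega^{(s)}}V(x,s)\,\d x,$$
recalling from \eqref{two9c} that $V(x,s)=\int_{\Omega^{(s)}}G(x,y)\,\d y$. Because $G$ is symmetric and smooth off the diagonal (and in fact globally smooth as a product of Gaussians), Lemma~\ref{latelemma3} applied to both occurrences of $\Omega^{(s)}$ will yield the first derivative as a boundary integral, and a further appeal to the divergence theorem will convert it back into a bulk integral suitable for a second differentiation.

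First I would compute $I'(s)$ by Reynolds' transport formula:
$$I'(s)=\int_{\Omega^{(s)}}\frac{\partial V}{\partial s}(x,s)\,\d x+\int_{\redb\Omega^{(s)}}V(x,s)\langle X(x),N^{(s)}(x)\rangle\,\d x.$$
Applying Lemma~\ref{latelemma3} gives $\partial_{s}V(x,s)=\int_{\redb\Omega^{(s)}}G(x,y)\langle X(y),N^{(s)}(y)\rangle\,\d y$. Swapping the order of integration in the first term and using symmetry of $G$, the two contributions coincide, so
$$\tfrac{1}{2}I'(s)=\int_{\redb\Omega^{(s)}}V(x,s)\langle X(x),N^{(s)}(x)\rangle\,\d x.$$
Since $X\in C_{0}^{\infty}$ and $V(\cdot,s)$ is smooth on $\R^{\adimn}$, the divergence theorem applied to $\Omega^{(s)}$ gives
$$\tfrac{1}{2}I'(s)=\int_{\Omega^{(s)}}\mathrm{div}\bigl(V(x,s)X(x)\bigr)\,\d x.$$

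Now I differentiate this bulk expression a second time, again by Reynolds:
$$\tfrac{1}{2}I''(s)=\int_{\Omega^{(s)}}\partial_{s}\mathrm{div}\bigl(V(x,s)X(x)\bigr)\,\d x+\int_{\redb\Omega^{(s)}}\mathrm{div}\bigl(V(x,s)X(x)\bigr)\langle X(x),N^{(s)}(x)\rangle\,\d x.$$
Evaluating at $s=0$, the second term is immediately the second summand in the target formula. For the first term, interchanging $\partial_{s}$ with $\mathrm{div}_{x}$ (justified by smoothness of $V(\cdot,s)$ in both variables and compact support of $X$) and inserting the formula for $\partial_{s}V$ already computed gives
$$\int_{\Omega}\mathrm{div}_{x}\!\Bigl(X(x)\!\int_{\redb\Omega}\!G(x,y)\langle X(y),N(y)\rangle\,\d y\Bigr)\d x=\int_{\redb\Omega}\!\int_{\redb\Omega}\!G(x,y)\langle X(x),N(x)\rangle\langle X(y),N(y)\rangle\,\d x\,\d y,$$
after one more application of the divergence theorem in $x$ (with $y$ as parameter). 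Combining the two contributions yields exactly the asserted formula.

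The main technical obstacle is justifying the differentiations under the integral sign and the applications of the divergence theorem on the possibly singular set $\partial\Omega$. Under the regularity assumption that $\partial\Omega$ is a locally finite union of $C^{\infty}$ manifolds (with boundary), the singular part has codimension at least two and contributes zero to the surface integrals; the flow $\Psi(\cdot,s)$ of $X$ preserves this structure for small $\abs{s}$, and the Gaussian factor in $G$ together with the compact support of $X$ ensures the dominated convergence needed to interchange derivatives and integrals. With these verifications (which follow the same lines as \cite[Theorem~2.6]{chokski07} and \cite[Theorem~1.10]{heilman15}), the derivation above produces the claimed identity.
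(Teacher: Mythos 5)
The paper does not prove Theorem \ref{thm4}; it is stated with citations to Chokshi--Sternberg and to an earlier paper of Heilman, and no in-paper proof is given, so there is no proof to compare against. Your argument via Reynolds' transport theorem is correct and self-contained. The computation checks out: using symmetry of $G$ to identify the two contributions to $I'(s)$ and collapse them into $\tfrac{1}{2}I'(s)=\int_{\redb\Omega^{(s)}}V(x,s)\langle X,N^{(s)}\rangle\,\d x$, converting this back to a bulk integral $\int_{\Omega^{(s)}}\mathrm{div}(V(x,s)X(x))\,\d x$ via the divergence theorem, and then applying Reynolds once more, produces exactly the two terms of the claimed identity; the interchange of $\partial_{s}$ with $\mathrm{div}_{x}$ is legitimate since $V(\cdot,s)$ is smooth (Gaussian convolution) with all $x$-derivatives differentiable in $s$ uniformly on the compact support of $X$. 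The approach in the cited references is typically organized instead by pulling the double integral back through the flow $\Psi(\cdot,s)$ and differentiating the Jacobian factors under the integral sign before converting to boundary terms; your bulk--boundary--bulk--boundary shuttle avoids explicit Jacobian bookkeeping and is arguably cleaner, though it relies more heavily on the divergence theorem on the piecewise-smooth boundary, which you correctly flag and dispose of (the codimension-two singular set contributes nothing to the surface integrals).
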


\section{Noise Stability and the Calculus of Variations}\label{secnoise}

We now further refine the first and second variation formulas from the previous section.

Let $X\in C_{0}^{\infty}(\R^{\adimn},\R^{\adimn})$.  Let $\{\Omega_{i}^{(s)}\}_{s\in(-1,1)}$ be the corresponding variation of $\Omega_{i}$ for all $1\leq i\leq m$, and similarly for $\{\Omega_{i}^{'(s)}\}_{s\in(-1,1)}$.  Denote $f_{ij}(x)\colonequals\langle X(x),N_{ij}(x)\rangle$, for all $x\in\Sigma_{ij}\colonequals (\redb\Omega_{i})\cap(\redb\Omega_{j})$.  We let $N$ denote the exterior pointing unit normal vector to $\redb\Omega_{i}$ for any $1\leq i\leq m$.  Denote $f_{ij}'(x)\colonequals\langle X(x),N_{ij}'(x)\rangle$, for all $x\in\Sigma_{ij}'\colonequals (\redb\Omega_{i}')\cap(\redb\Omega_{j}')$.  We let $N'$ denote the exterior pointing unit normal vector to $\redb\Omega_{i}'$ for any $1\leq i\leq m$.

\begin{lemma}[\embolden{Second Variation}]\label{lemma6}
Let $\Omega_{1},\ldots,\Omega_{m},\Omega_{1}',\ldots,\Omega_{m}'\subset\R^{\adimn}$ be two partitions of $\R^{\adimn}$ into measurable sets such that $\partial\Omega_{i},\partial\Omega_{i}'$ are a locally finite union of $C^{\infty}$ manifolds for all $1\leq i\leq m$.  Then
\begin{equation}\label{four30}
\begin{aligned}
&\frac{1}{2}\frac{\d^{2}}{\d s^{2}}\Big|_{s=0}\sum_{i,j=1}^{m}d_{ij}\Big(\int_{\R^{\adimn}} \int_{\R^{\adimn}} 1_{\Omega_{i}^{(s)}}(y)G(x,y) 1_{\Omega_{j}^{'(s)}}(x)\,\d x\d y - z_{ij}\Big)^{2}\\
&=\sum_{1\leq i<j\leq m}
\int_{\Sigma_{ij}'}\Big[\Big(\sum_{k=1}^{m}u_{ijk}\int_{\redb\Omega_{k}}\Big)G(x,y)\langle X(y),N(y)\rangle \,\d y\Big] f_{ij}'(x) \,\d x\\
&\quad+\int_{\Sigma_{ij}}\Big[\Big(\sum_{k=1}^{m}u_{ijk}'\int_{\redb\Omega_{k}'}\Big)G(x,y)\langle X'(y),N'(y)\rangle \,\d y\Big] f_{ij}(x) \,\d x\\
&\quad+\int_{\Sigma_{ij}}\Big\langle\overline{\nabla} T_{\rho}\Big(\sum_{k=1}^{m}u_{ijk}'1_{\Omega_{k}'}\Big)(x),X(x)\Big\rangle f_{ij}(x) \gamma_{\adimn}(x)\,\d x\\
&\quad+\int_{\Sigma_{ij}} T_{\rho}\Big(\sum_{k=1}^{m}u_{ijk}'1_{\Omega_{k}'}\Big)(x)\Big(\mathrm{div}(X(x))-\langle X(x),x\rangle\Big)f_{ij}(x)\gamma_{\adimn}(x)\,\d x\\
&\quad+\int_{\Sigma_{ij}'}\Big\langle\overline{\nabla} T_{\rho}\Big(\sum_{k=1}^{m}u_{ijk}1_{\Omega_{k}}\Big)(x),X'(x)\Big\rangle f_{ij}'(x) \gamma_{\adimn}(x)\,\d x\\
&\quad+\int_{\Sigma_{ij}'} T_{\rho}\Big(\sum_{k=1}^{m}u_{ijk}1_{\Omega_{k}}\Big)(x)\Big(\mathrm{div}(X'(x))-\langle X'(x),x\rangle\Big)f_{ij}'(x)\gamma_{\adimn}(x)\,\d x\\
&\qquad +\sum_{i,j=1}^{m}d_{ij}\Big[\int_{\redb\Omega_{i}} T_{\rho}1_{\Omega_{j}'}(x)f_{ij}(x)\gamma_{\adimn}(x)\,\d x
+\int_{\redb\Omega_{j}'} T_{\rho}1_{\Omega_{i}}(x)f_{ij}'(x)\gamma_{\adimn}(x)\,\d x\Big]^{2}.
\end{aligned}
\end{equation}
\end{lemma}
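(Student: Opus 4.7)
The approach is to apply the chain rule $\frac{1}{2}\frac{\d^{2}}{\d s^{2}}(h_{ij}-z_{ij})^{2}=(h_{ij}'(s))^{2}+(h_{ij}(s)-z_{ij})h_{ij}''(s)$ to
$$h_{ij}(s)\colonequals\int_{\R^{\adimn}}\int_{\R^{\adimn}}1_{\Omega_{i}^{(s)}}(y)G(x,y)1_{\Omega_{j}^{'(s)}}(x)\,\d x\,\d y,$$
and then sum with the weights $d_{ij}$.  The $(h_{ij}'(0))^{2}$ contribution, computed from Lemma \ref{latelemma3} applied separately to the $\Omega_{i}$-variation (with vector field $X$) and the $\Omega_{j}'$-variation (with vector field $X'$), gives exactly the final squared-bracket term of \eqref{four30}.

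For $h_{ij}''(0)$, introduce $g(s_{1},s_{2})\colonequals\int\int 1_{\Omega_{i}^{(s_{1})}}(y)G(x,y)1_{\Omega_{j}^{'(s_{2})}}(x)\,\d x\,\d y$ so that $h_{ij}''(0)=g_{11}(0,0)+2g_{12}(0,0)+g_{22}(0,0)$.  The pure term $g_{11}$ (with $\Omega_{j}'$ fixed) is computed by writing $h_{ij}(s)=\int_{\Omega_{i}^{(s)}}\gamma_{\adimn}(y)T_{\rho}1_{\Omega_{j}'}(y)\,\d y$ and applying the transport theorem twice (as in the proof of Theorem \ref{thm4}), yielding
$$g_{11}(0,0)=\int_{\redb\Omega_{i}}\mathrm{div}(\gamma_{\adimn}T_{\rho}1_{\Omega_{j}'}\,X)(y)\langle X(y),N_{i}(y)\rangle\,\d y;$$
expanding the divergence via $\nabla\gamma_{\adimn}(y)=-y\gamma_{\adimn}(y)$ produces the factor $\gamma_{\adimn}[\langle\overline{\nabla}T_{\rho}1_{\Omega_{j}'},X\rangle+T_{\rho}1_{\Omega_{j}'}(\mathrm{div}X-\langle X,y\rangle)]$ appearing in \eqref{four30}, and $g_{22}$ is treated symmetrically.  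Differentiating the boundary formula of Lemma \ref{latelemma3} once more in the complementary parameter gives the cross contribution
$$g_{12}(0,0)=\int_{\redb\Omega_{i}}\int_{\redb\Omega_{j}'}G(x,y)\langle X(y),N_{i}(y)\rangle\langle X'(x),N_{j}'(x)\rangle\,\d x\,\d y.$$

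The remaining task is a bookkeeping reorganization of $\sum_{i,j}d_{ij}(s_{ij}-z_{ij})h_{ij}''(0)$ into the form \eqref{four30} via the \emph{partition identity}
$$\sum_{i=1}^{m}c_{i}\int_{\redb\Omega_{i}}K(y)\langle X(y),N_{i}(y)\rangle\,\d y=\sum_{1\leq i<j\leq m}\int_{\Sigma_{ij}}(c_{i}-c_{j})K(y)f_{ij}(y)\,\d y,$$
which is immediate from $\redb\Omega_{i}=\bigsqcup_{p\neq i}\Sigma_{ip}$ and $N_{ji}=-N_{ij}$.  Applying the identity to $g_{11}$ with $c_{i}=d_{ij}(s_{ij}-z_{ij})$ produces on $\Sigma_{ip}$ the coefficient $\sum_{j}[d_{ij}(s_{ij}-z_{ij})-d_{pj}(s_{pj}-z_{pj})]1_{\Omega_{j}'}=\sum_{k}u_{ipk}'1_{\Omega_{k}'}$, so that relabeling $(i,p,j)\to(i,j,k)$ yields the third and fourth terms of \eqref{four30}; the symmetric treatment of $g_{22}$ yields the fifth and sixth terms with $u_{ijk}$ coefficients on $\Sigma_{ij}'$.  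For the cross term, split $2g_{12}=g_{12}+g_{12}$ and reorganize one copy by applying the partition identity first in the $\redb\Omega_{i}$-integral (which recovers the second term of \eqref{four30} with $u_{ijk}'$ coefficients on $\Sigma_{ij}$) and the other copy by applying it first in the $\redb\Omega_{j}'$-integral (which recovers the first term with $u_{ijk}$ coefficients on $\Sigma_{ij}'$).

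The principal obstacle is the consistent tracking of sign and index conventions in this final reorganization: each use of $N_{ji}=-N_{ij}$ must be propagated through the nested sums so that the pairwise differences produced by the partition identity assemble exactly into the asymmetric quantities $u_{ijk}=d_{ki}(s_{ki}-z_{ki})-d_{kj}(s_{kj}-z_{kj})$ and $u_{ijk}'=d_{ik}(s_{ik}-z_{ik})-d_{jk}(s_{jk}-z_{jk})$, correctly paired with their interface ($u_{ijk}$ on $\Sigma_{ij}'$, $u_{ijk}'$ on $\Sigma_{ij}$).  Once the index conventions are fixed, the remaining verification is a routine application of Fubini together with the first and second variation formulas of the preceding section.
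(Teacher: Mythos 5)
Your proposal is correct and follows essentially the same route as the paper: chain rule on $(h_{ij}-z_{ij})^{2}$, a second-variation computation for the bilinear quantity $\int\int 1_{\Omega_{i}^{(s)}}G\,1_{\Omega_{j}^{'(s)}}$, and a reorganization of the $\redb\Omega_{i}$-integrals into $\Sigma_{ij}$-integrals carrying the coefficients $u_{ijk},u_{ijk}'$. The only structural difference is that the paper obtains the bilinear second derivative by polarizing and invoking Theorem \ref{thm4} for the diagonal case, whereas you compute the pure and mixed partials $g_{11},g_{12},g_{22}$ directly from the transport theorem; these yield the same six-term formula. Your explicit statement of the partition identity and the identification of the $\Sigma_{ip}$-coefficient as $\sum_{k}u_{ipk}'1_{\Omega_{k}'}$ makes precise a reorganization that the paper compresses into ``summing over $i,j$ then concludes the proof,'' and this is a genuine clarification rather than a gap.
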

\begin{proof}
First consider the functional $\int_{\R^{\adimn}}\int_{\R^{\adimn}} 1_{\Omega^{(s)}}(y)G(x,y) 1_{\Omega^{(s)}}(x)\,\d x\d y$.  For all $x\in\R^{\adimn}$, we have $V(x,0)\stackrel{\eqref{two9c}}{=}\int_{\Omega}G(x,y)\,\d y\stackrel{\eqref{oudef}}{=}\gamma_{\adimn}(x)T_{\rho}1_{\Omega}(x)$.  So, from Theorem \ref{thm4},
\begin{flalign*}
&\frac{1}{2}\frac{\d^{2}}{\d s^{2}}\Big|_{s=0}\int_{\R^{\adimn}}\int_{\R^{\adimn}} 1_{\Omega^{(s)}}(y)G(x,y) 1_{\Omega^{(s)}}(x)\,\d x\d y\\
&=\int_{\redA}\int_{\redA}G(x,y)\langle X(x),N(x)\rangle\langle X(y),N(y)\rangle \,\d x\d y\\
&+\int_{\redA}(\sum_{i=1}^{\adimn}T_{\rho}1_{\Omega}(x)\frac{\partial}{\partial x_{i}}X_{i}(x)-x_{i}T_{\rho}1_{\Omega}(x)X_{i}(x)
+\frac{\partial}{\partial x_{i}}T_{\rho}1_{\Omega}(x)X_{i}(x))\langle X(x),N(x)\rangle \gamma_{\adimn}(x)\,\d x\\
&=\int_{\redA}\int_{\redA}G(x,y)\langle X(x),N(x)\rangle\langle X(y),N(y)\rangle \,\d x\d y\\
&+\int_{\redA}T_{\rho}1_{\Omega}(x)\Big(\mathrm{div}(X(x))-\langle X(x),x\rangle\Big)
+\langle\nabla T_{\rho}1_{\Omega}(x),X(x)\rangle\rangle\langle X(x),N(x)\rangle \gamma_{\adimn}(x)\,\d x.
\end{flalign*}
Using the polarization identity in the form
\begin{flalign*}
&\int_{\R^{\adimn}}\int_{\R^{\adimn}} 1_{\Omega^{(s)}}(y)G(x,y) 1_{\Omega^{'(s)}}(x)\,\d x\d y\\
&\qquad=\frac{1}{4}\Big(\int_{\R^{\adimn}}\int_{\R^{\adimn}} (1_{\Omega^{(s)}}+1_{\Omega^{'(s)}})(y)G(x,y) (1_{\Omega^{(s)}}+1_{\Omega^{'(s)}})(x)\,\d x\d y\\
&\qquad\qquad\qquad\qquad
-\int_{\R^{\adimn}}\int_{\R^{\adimn}} (1_{\Omega^{(s)}}-1_{\Omega^{'(s)}})(y)G(x,y) (1_{\Omega^{(s)}}-1_{\Omega^{'(s)}})(x)\,\d x\d y\Big),
\end{flalign*}
we then get, denoting $f(x)\colonequals\langle X(x),N(x)\rangle$ $\forall$ $x\in\Sigma$, $f'(x)\colonequals\langle X(x),N'(x)\rangle$ $\forall$ $x\in\Sigma'$,
\begin{flalign*}
&\frac{\d^{2}}{\d s^{2}}\Big|_{s=0}\int_{\R^{\adimn}}\int_{\R^{\adimn}} 1_{\Omega^{(s)}}(y)G(x,y) 1_{\Omega^{'(s)}}(x)\,\d x\d y\\
&\qquad=\int_{\Sigma'}\int_{\Sigma}G(x,y)\langle X(y),N(y)\rangle \,\d y f'(x) \,\d x\\
&\qquad\qquad+\int_{\Sigma}\int_{\Sigma'}G(x,y)\langle X'(y),N'(y)\rangle \,\d y f(x) \,\d x\\
&\qquad\qquad+\int_{\Sigma}\langle\overline{\nabla} T_{\rho}(1_{\Omega'})(x),X(x)\rangle f(x) \gamma_{\adimn}(x)\,\d x\\
&\qquad\qquad+\int_{\Sigma} T_{\rho}(1_{\Omega'})(x)\Big(\mathrm{div}(X(x))-\langle X(x),x\rangle\Big)f(x)\gamma_{\adimn}(x)\,\d x\\
&\qquad\qquad+\int_{\Sigma'}\langle\overline{\nabla} T_{\rho}(1_{\Omega})(x),X'(x)\rangle f'(x) \gamma_{\adimn}(x)\,\d x\\
&\qquad\qquad+\int_{\Sigma'} T_{\rho}(1_{\Omega})(x)\Big(\mathrm{div}(X'(x))-\langle X'(x),x\rangle\Big)f'(x)\gamma_{\adimn}(x)\,\d x.
\end{flalign*}
We then use the chain rule to get
\begin{flalign*}
&\frac{1}{2}\frac{\d^{2}}{\d s^{2}}\Big|_{s=0}\Big(\int_{\R^{\adimn}}\int_{\R^{\adimn}} 1_{\Omega^{(s)}}(y)G(x,y) 1_{\Omega^{'(s)}}(x)\,\d x\d y -z\Big)^{2}\\
&\qquad=(s_{0}-z)\frac{\d^{2}}{\d s^{2}}\Big|_{s=0}\int_{\R^{\adimn}}\int_{\R^{\adimn}} 1_{\Omega^{(s)}}(y)G(x,y) 1_{\Omega^{'(s)}}(x)\,\d x\d y\\
&\qquad\qquad+\Big(\frac{\d}{\d s}\Big|_{s=0}\int_{\R^{\adimn}}\int_{\R^{\adimn}} 1_{\Omega^{(s)}}(y)G(x,y) 1_{\Omega^{'(s)}}(x)\,\d x\d y \Big)^{2}.
\end{flalign*}
Here $s_{0}\colonequals \int_{\R^{\adimn}}\int_{\R^{\adimn}} 1_{\Omega}(y)G(x,y) 1_{\Omega'}(x)\,\d x\d y$.  Summing over $i,j$ then concludes the proof, using also Lemma \ref{firstvarmaxns} to get
\begin{flalign*}
&\sum_{i,j=1}^{m}d_{ij}\Big(\frac{\d}{\d s}\Big|_{s=0}\int_{\R^{\adimn}}\int_{\R^{\adimn}} 1_{\Omega_{i}^{(s)}}(y)G(x,y) 1_{\Omega_{j}^{'(s)}}(x)\,\d x\d y \Big)^{2}\\
&\qquad=\sum_{i,j=1}^{m}d_{ij}\Big[\int_{\redb\Omega_{i}} T_{\rho}1_{\Omega_{j}'}(x)f_{ij}(x)\gamma_{\adimn}(x)\,\d x
+\int_{\redb\Omega_{j}'} T_{\rho}1_{\Omega_{i}}(x)f_{ij}'(x)\gamma_{\adimn}(x)\,\d x\Big]^{2}.
\end{flalign*}

\end{proof}

\begin{lemma}[\embolden{Second Variation of Minimizers, Extended}]\label{lemma7rn}
Suppose we have two partitions $\Omega_{1},\ldots,\Omega_{m},$ $\Omega_{1}',\ldots,\Omega_{m}'\subset\R^{\adimn}$ of $\R^{\adimn}$ into measurable sets such that $\partial\Omega_{i},\partial\Omega_{i}'$ are a locally finite union of $C^{\infty}$ manifolds $\forall$ $1\leq i\leq m$.  Suppose we extend $X|_{\cup_{i=1}^{m}\partial\Omega_{i}}$ such that $\mathrm{div}(X)(x)=\langle X(x),x\rangle$ and we extend $X'|_{\cup_{i=1}^{m}\partial\Omega_{i}'}$ such that $\mathrm{div}(X')(x)=\langle X'(x),x\rangle$.  Then
\begin{equation}\label{four32pv2n}
\begin{aligned}
&\frac{1}{2}\frac{\d^{2}}{\d s^{2}}\Big|_{s=0}\sum_{i,j=1}^{m}d_{ij}\Big(\int_{\R^{\adimn}} \int_{\R^{\adimn}} 1_{\Omega_{i}^{(s)}}(y)G(x,y) 1_{\Omega_{j}^{'(s)}}(x)\,\d x\d y - z_{ij}\Big)^{2}\\
&=\sum_{1\leq i<j\leq m}\int_{\Sigma_{ij}'}\Big[\Big(\sum_{k=1}^{m}u_{ijk}\int_{\redb\Omega_{k}}\Big)G(x,y)\langle X(y),N(y)\rangle \,\d y\Big] f_{ij}'(x) \,\d x\\
&\quad+\int_{\Sigma_{ij}}\Big[\Big(\sum_{k=1}^{m}u_{ijk}'\int_{\redb\Omega_{k}'}\Big)G(x,y)\langle X'(y),N'(y)\rangle \,\d y\Big] f_{ij}(x) \,\d x\\
&\quad+\int_{\Sigma_{ij}}\Big\|\overline{\nabla} T_{\rho}\Big(\sum_{k=1}^{m}u_{ijk}'1_{\Omega_{k}'}\Big)(x)\Big\|\abs{f_{ij}(x)}^{2} \gamma_{\adimn}(x)\,\d x\\
&\quad+\int_{\Sigma_{ij}'}\Big\|\overline{\nabla} T_{\rho}\Big(\sum_{k=1}^{m}u_{ijk}1_{\Omega_{k}}\Big)(x)\Big\|\abs{f_{ij}'(x)}^{2} \gamma_{\adimn}(x)\,\d x\\
&\quad +\sum_{i,j=1}^{m}d_{ij}\Big[\int_{\redb\Omega_{i}} T_{\rho}1_{\Omega_{j}'}(x)f_{ij}(x)\gamma_{\adimn}(x)\,\d x
+\int_{\redb\Omega_{j}'} T_{\rho}1_{\Omega_{i}}(x)f_{ij}'(x)\gamma_{\adimn}(x)\,\d x\Big]^{2}.
\end{aligned}
\end{equation}
Also,
\begin{equation}\label{nabeq3n}
\begin{aligned}
&\overline{\nabla} T_{\rho}\Big(\sum_{k=1}^{m}u_{ijk}'1_{\Omega_{k}'}\Big)(x)=N_{ij}(x)\Big\|\overline{\nabla} T_{\rho}\Big(\sum_{k=1}^{m}u_{ijk}'1_{\Omega_{k}'}\Big)(x)\Big\|,\qquad\forall\,x\in\Sigma_{ij}.\\
&\overline{\nabla} T_{\rho}\Big(\sum_{k=1}^{m}u_{ijk}1_{\Omega_{k}}\Big)(x)=N_{ij}'(x)\Big\|\overline{\nabla} T_{\rho}\Big(\sum_{k=1}^{m}u_{ijk}1_{\Omega_{k}}\Big)(x)\Big\|,\qquad\forall\,x\in\Sigma_{ij}'.\\
\end{aligned}
\end{equation}
Moreover, $\big\|\overline{\nabla} T_{\rho}\big(\sum_{k=1}^{m}u_{ijk}'1_{\Omega_{k}'}\big)(x)\big\|>0$ for all $x\in\Sigma_{ij}$, except on a set of Hausdorff dimension at most $\sdimn-1$, and $\big\|\overline{\nabla} T_{\rho}\big(\sum_{k=1}^{m}u_{ijk}1_{\Omega_{k}}\big)(x)\big\|>0$ for all $x\in\Sigma_{ij}'$, except on a set of Hausdorff dimension at most $\sdimn-1$.
\end{lemma}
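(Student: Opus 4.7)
My plan is to refine the general formula \eqref{four30} of Lemma \ref{lemma6} in three steps: (i) the Gaussian-divergence-free extension hypothesis on $X,X'$ kills two of the six integrals in \eqref{four30} outright; (ii) the first variation condition (Lemma \ref{firstvarmaxns}) forces the remaining $\overline{\nabla}T_{\rho}(\cdot)$ factors to be parallel to the unit normals, which converts the two gradient-inner-product integrals into manifestly nonnegative quadratic forms in $f_{ij}$ and $f_{ij}'$; (iii) a Strong Unique Continuation argument yields the asserted nondegeneracy of the gradient norms.  The first-order squared term on the last line of \eqref{four30} is already in the final form, so no work is needed there.

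Step (i) is immediate.  Both $\Sigma_{ij}\subset\bigcup_{i}\redb\Omega_{i}$ and $\Sigma_{ij}'\subset\bigcup_{i}\redb\Omega_{i}'$, so by hypothesis $\mathrm{div}(X(x))-\langle X(x),x\rangle=0$ on $\Sigma_{ij}$ and $\mathrm{div}(X'(x))-\langle X'(x),x\rangle=0$ on $\Sigma_{ij}'$, causing the two $T_{\rho}(\cdot)(\mathrm{div}-\langle\cdot,x\rangle)$ integrals in \eqref{four30} to vanish term by term.

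For step (ii), Lemma \ref{firstvarmaxns} gives $T_{\rho}\bigl(\sum_{k}u_{ijk}'1_{\Omega_{k}'}\bigr)(x)=c_{ij}'$ for every $x\in\Sigma_{ij}$, so every derivative of this function tangent to $\Sigma_{ij}$ vanishes; hence $\overline{\nabla}T_{\rho}\bigl(\sum_{k}u_{ijk}'1_{\Omega_{k}'}\bigr)(x)$ is a scalar multiple of $N_{ij}(x)$, and symmetrically on $\Sigma_{ij}'$.  The sign in \eqref{nabeq3n} is pinned down by re-running the first-order swap argument from Step 2 of Lemma \ref{reglem}: requiring $\Delta F\geq 0$ under an infinitesimal swap of a small portion of $\Omega_{i}'$ (resp.\ $\Omega_{i}$) with a small portion of $\Omega_{j}'$ (resp.\ $\Omega_{j}$) of equal Gaussian measure forces $T_{\rho}\bigl(\sum_{k}u_{ijk}'1_{\Omega_{k}'}\bigr)$ to be strictly monotone across $\Sigma_{ij}$, increasing in the direction from $\Omega_{i}$ into $\Omega_{j}$, i.e.\ in the direction of $+N_{ij}$; this yields the first line of \eqref{nabeq3n}, and the analogous argument gives the second.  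Inserting \eqref{nabeq3n} into line (3) of \eqref{four30} converts the integrand $\bigl\langle\overline{\nabla}T_{\rho}(\cdot),X\bigr\rangle f_{ij}$ into $\|\overline{\nabla}T_{\rho}(\cdot)\|\langle N_{ij},X\rangle f_{ij}=\|\overline{\nabla}T_{\rho}(\cdot)\|\,|f_{ij}|^{2}$, which is the third line of \eqref{four32pv2n}; line (5) of \eqref{four30} becomes the fourth line of \eqref{four32pv2n} in the same way.

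Step (iii) uses Remark \ref{uirk}, which guarantees that off a measure-zero subset of $\mathrm{Ext}\mathcal{S}_{\rho,m}(\adimn)$ the coefficients $(u_{ijk}')_{k=1}^{m}$ are not all zero, so $\sum_{k}u_{ijk}'1_{\Omega_{k}'}$ is nonconstant on $\R^{\adimn}$ (each $\Omega_{k}'$ having positive Gaussian measure up to negligible modifications), whence $T_{\rho}\bigl(\sum_{k}u_{ijk}'1_{\Omega_{k}'}\bigr)$ is a nonconstant real-analytic function.  The Strong Unique Continuation Property for the heat equation already used in Step 3 of Lemma \ref{reglem} (via \cite{lin90,lin94,hardt89,chen98}) forbids this function from vanishing to infinite order at any point, and the standard stratification of its critical set then bounds the Hausdorff dimension of $\{x\in\Sigma_{ij}:\|\overline{\nabla}T_{\rho}(\sum_{k}u_{ijk}'1_{\Omega_{k}'})(x)\|=0\}$ by $\sdimn-1$; the primed variant is identical.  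I expect the only nontrivial bookkeeping to live in step (ii)'s sign determination: extracting $+N_{ij}$ rather than $-N_{ij}$ in \eqref{nabeq3n} requires carefully matching the sign convention for $u_{ijk}'$, the orientation of $N_{ij}$, and the direction of the infinitesimal swap used to get first-order minimality.
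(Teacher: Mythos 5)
Your overall plan is correct and the result you reach agrees with the paper's statement, but you diverge from the paper's route at exactly the two places where the paper itself is terse: the sign in \eqref{nabeq3n} and the Hausdorff bound on the degenerate set.

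Step (i) is the same as the paper: the hypothesis $\mathrm{div}(X)=\langle X,x\rangle$ on a neighborhood of $\cup\partial\Omega_i$ (and likewise for $X'$) annihilates the two divergence integrals in \eqref{four30}.  The ``parallel to the normal'' part of step (ii) is also the same: you and the paper both obtain it by differentiating the first-variation identity of Lemma \ref{firstvarmaxns} along $\Sigma_{ij}$.  Where you diverge is the determination of the sign.  The paper fixes the sign by a second-variation argument: if the sign in \eqref{nabeq3n} were wrong, the fourth (or third) line of \eqref{four30} would be a negative-definite quadratic form, and choosing $X'$ supported near $\Sigma_{ij}'$ with $X$ small would produce a negative second derivative, contradicting minimality.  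You instead reuse the first-order swap argument from Step 2 of Lemma \ref{reglem} to obtain the level-set inclusion and deduce $\langle\overline{\nabla}T_\rho(\cdot),N_{ij}\rangle\geq 0$ directly; combined with parallelism this gives \eqref{nabeq3n}.  Both arguments are valid; yours is arguably more elementary, but you should be careful with the phrase ``strictly monotone across $\Sigma_{ij}$'' --- the first-order argument alone only gives the weak inequality $\langle\overline{\nabla}T_\rho(\cdot),N_{ij}\rangle\geq 0$ (strictness is exactly the content of the last sentence of the lemma, which you defer to step (iii)).  Also, before taking the displayed conclusion of \eqref{zero8}/\eqref{zero8p} at face value, re-derive the direction of the inequality: the swap perturbation $\widetilde{\Omega}_i'=U_j\cup\Omega_i'\setminus U_i$ has first-order change $2\int[1_{U_j}-1_{U_i}]T_\rho(\sum_k u_{ijk}1_{\Omega_k})\gamma\,\d x$, and minimality requires this to be nonnegative for all admissible $U_i\subset\Omega_i'$, $U_j\subset\Omega_j'$ with $\gamma(U_i)=\gamma(U_j)$, which makes $\Omega_i'$ a sub-level set rather than a super-level set near $\Sigma_{ij}'$; this is what forces $+N_{ij}'$ in \eqref{nabeq3n}.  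Your conclusion is consistent with this, but the bookkeeping you flag as delicate really is.

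For step (iii) the paper only explicitly rules out vanishing of the gradient norm on an open subset of $\Sigma_{ij}$ (again by a second-variation argument), leaving the stronger Hausdorff-dimension claim implicit.  Your argument --- nonconstancy via Remark \ref{uirk}, strong unique continuation from \cite{lin90}, then the stratification bound from \cite{hardt89,lin94,chen98} --- is the same machinery that the paper already deployed in Step 3 of Lemma \ref{reglem}, and it yields the $\le\sdimn-1$ bound directly.  This is actually a cleaner justification of the last sentence of the lemma than what the paper writes.

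In short: your proof is correct and reaches the same formula; the notable differences are that you prove the sign by first-order rather than second-order variation, and you make explicit the analyticity/unique-continuation argument for the nondegeneracy claim that the paper invokes more implicitly.
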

\begin{proof}
We extend $X$ such that $\mathrm{div}(X(x))=\langle X(x),x\rangle$ for all $x$ in a neighborhood of $\cup_{i=1}^{m}\partial\Omega_{i}$, and we extend $X'$ such that $\mathrm{div}(X'(x))=\langle X'(x),x\rangle$ for all $x$ in a neighborhood of $\cup_{i=1}^{m}\partial\Omega_{i}'$, so that the two integrals with divergence terms in Lemma \ref{lemma6} vanish.  Equation \ref{nabeq3n} follows by taking the gradient of the First Variation condition from Lemma \ref{firstvarmaxns}, though there is an ambiguity in the sign of the right side of \eqref{nabeq3n} which will be eliminated below.

If there exists $1\leq i<j\leq m$ such that $\big\|\overline{\nabla} T_{\rho}\big(\sum_{k=1}^{m}u_{ijk}1_{\Omega_{k}}\big)(x)\big\|=0$ on an open set in $\Sigma_{ij}'$, then choose $X'$ supported in this open set so that the fifth term of \eqref{four30} is zero.  Then, choose $X$ such that sum of the first two terms in \eqref{four30} is negative.  And choose $X,X'$ such that the last term is zero. Multiplying then $X$ by a small positive constant, and noting that the fourth term in \eqref{four30} has quadratic dependence on $X$ while the first two terms have linear dependence on $X$, we can create a negative second derivative of the noise stability, giving a contradiction.  We can similarly justify the positive signs appearing in \eqref{nabeq3n}.

We then arrive at
\begin{flalign*}
&\frac{1}{2}\frac{\d^{2}}{\d s^{2}}\Big|_{s=0}\sum_{i,j=1}^{m}d_{ij}\Big(\int_{\R^{\adimn}} \int_{\R^{\adimn}} 1_{\Omega_{i}^{(s)}}(y)G(x,y) 1_{\Omega_{j}^{'(s)}}(x)\,\d x\d y - z_{ij}\Big)^{2}\\
&=\sum_{1\leq i<j\leq m}\int_{\Sigma_{ij}'}\Big[\Big(\sum_{k=1}^{m}u_{ijk}\int_{\redb\Omega_{k}}\Big)G(x,y)\langle X(y),N(y)\rangle \,\d y\Big] f_{ij}'(x) \,\d x\\
&\quad\quad+\int_{\Sigma_{ij}}\Big[\Big(\sum_{k=1}^{m}u_{ijk}'\int_{\redb\Omega_{k}'}\Big)G(x,y)\langle X'(y),N'(y)\rangle \,\d y\Big] f_{ij}(x) \,\d x\\
&\quad\quad+\int_{\Sigma_{ij}}\Big\langle\overline{\nabla} T_{\rho}\Big(\sum_{k=1}^{m}u_{ijk}'1_{\Omega_{k}'}\Big)(x),X(x)\Big\rangle f_{ij}(x) \gamma_{\adimn}(x)\,\d x\\
&\quad\quad+\int_{\Sigma_{ij}'}\Big\langle\overline{\nabla} T_{\rho}\Big(\sum_{k=1}^{m}u_{ijk}1_{\Omega_{k}}\Big)(x),X'(x)\Big\rangle f_{ij}'(x) \gamma_{\adimn}(x)\,\d x\\
&\quad\qquad +d_{ij}\Big[\int_{\redb\Omega_{i}} T_{\rho}1_{\Omega_{j}'}(x)f_{ij}(x)\gamma_{\adimn}(x)\,\d x
+\int_{\redb\Omega_{j}'} T_{\rho}1_{\Omega_{i}}(x)f_{ij}'(x)\gamma_{\adimn}(x)\,\d x\Big]^{2}.
\end{flalign*}
Substituting \eqref{nabeq3n} into this equality completes the proof.
\end{proof}

Let $v\in\R^{\adimn}$ and denote $f_{ij}\colonequals\langle v,N_{ij}\rangle$ for all $1\leq i,j\leq m$.  For simplicity of notation in the formulas below, if $1\leq i\leq m$ and if a vector $N(x)$ appears inside an integral over $\partial\Omega_{i}$, then $N(x)$ denotes the unit exterior pointing normal vector to $\Omega_{i}$ at $x\in\redb\Omega_{i}$.  Similarly, for simplicity of notation, we denote $\langle v,N\rangle$ as the collection of functions $(\langle v,N_{ij}\rangle)_{1\leq i<j\leq m}$.  For any $1\leq i<j\leq m$, define
\begin{equation}\label{sdef2}
\begin{aligned}
&S_{ij}(\langle v,N\rangle)(x)\colonequals (1-\rho^{2})^{-(\adimn)/2}(2\pi)^{-(\adimn)/2}
\sum_{k=1}^{m}u_{ijk}\int_{\redb\Omega_{k}}\langle v,N(y)\rangle e^{-\frac{\vnorm{y-\rho x}^{2}}{2(1-\rho^{2})}}\,\d y,\,\forall\,x\in\Sigma_{ij}'.\\
&S_{ij}'(\langle v,N'\rangle)(x)\colonequals (1-\rho^{2})^{-(\adimn)/2}(2\pi)^{-(\adimn)/2}
\sum_{k=1}^{m}u_{ijk}'\int_{\redb\Omega_{k}'}\langle v,N'(y)\rangle e^{-\frac{\vnorm{y-\rho x}^{2}}{2(1-\rho^{2})}}\,\d y,\,\forall\,x\in\Sigma_{ij}.
\end{aligned}
\end{equation}
Both quantities on the right are finite a priori by the divergence theorem and Remark \ref{drk}.

\begin{lemma}[\embolden{Key Lemma, $m\geq 2$, Translations as Almost Eigenfunctions}]\label{treig2}
Let $\Omega_{1},\ldots,\Omega_{m},\Omega_{1}',\ldots,\Omega_{m}'$ minimize Problem \ref{prob2}.  Assume that $\partial\Omega_{i},\partial\Omega_{i}'$ are locally finite unions of $C^{\infty}$ manifolds for all $1\leq i\leq m$.  Fix $1\leq i<j\leq m$.  Let $v,w\in\R^{\adimn}$.  Then
$$S_{ij}(\langle v,N\rangle)(x)=-\langle v,N_{ij}'(x)\rangle\frac{1}{\rho}\Big\|\overline{\nabla} T_{\rho}\Big(\sum_{k=1}^{m}u_{ijk}1_{\Omega_{k}}\Big)(x)\Big\|,\qquad\forall\,x\in\Sigma_{ij}'.$$
$$S_{ij}'(\langle w,N'\rangle)(x)=-\langle w,N_{ij}(x)\rangle\frac{1}{\rho}\Big\|\overline{\nabla} T_{\rho}\Big(\sum_{k=1}^{m}u_{ijk}'1_{\Omega_{k}'}\Big)(x)\Big\|,\qquad\forall\,x\in\Sigma_{ij}.$$
\end{lemma}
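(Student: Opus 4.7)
The plan is to reduce both identities to applying the divergence theorem to a carefully chosen vector field, after which the key algebraic identity is that the Mehler-type kernel $G_{x}(y)\colonequals e^{-\|y-\rho x\|^{2}/[2(1-\rho^{2})]}$ satisfies $\nabla_{y}G_{x}(y)=-\rho^{-1}\nabla_{x}G_{x}(y)$. By symmetry it suffices to treat the first identity, so fix $1\leq i<j\leq m$ and $v\in\R^{\adimn}$, and fix $x\in\Sigma_{ij}'$ as a parameter.

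First I would apply the divergence theorem to each set $\Omega_{k}$ using the vector field $y\mapsto v\,G_{x}(y)$. The outer normal to $\Omega_{k}$ at a reduced-boundary point is the collection of the vectors $N_{k\ell}$ pointing from $\Omega_{k}$ into $\Omega_{\ell}$, so
\[
\int_{\partial^{*}\Omega_{k}}\langle v,N(y)\rangle G_{x}(y)\,\d y
=\int_{\Omega_{k}}\mathrm{div}_{y}\bigl(v\,G_{x}(y)\bigr)\,\d y
=\langle v,\,\nabla_{y}^{\mathrm{tot}}\textstyle\int_{\Omega_{k}}G_{x}(y)\,\d y\rangle,
\]
where Gaussian decay of $G_{x}$ together with the local finite-surface-area regularity from Lemma \ref{reglem} makes the divergence theorem applicable (one may truncate to $B(0,R)$ and let $R\to\infty$, the boundary contribution on $\partial B(0,R)$ vanishing super-polynomially). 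I would then multiply by $u_{ijk}$, sum over $k$, and pull the sum inside the integral, so that the right-hand side becomes the gradient in $y$ of $\sum_{k}u_{ijk}\int_{\Omega_{k}}G_{x}(y)\,\d y$ contracted against $v$.

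The second step is the substitution $\nabla_{y}G_{x}(y)=-\rho^{-1}\nabla_{x}G_{x}(y)$, which follows by a direct calculation from the exponent $-\|y-\rho x\|^{2}/[2(1-\rho^{2})]$. After interchanging the integration in $y$ with the gradient in $x$ (justified by dominated convergence, again using the Gaussian tail of $G_{x}$), the sum becomes
\[
-\tfrac{1}{\rho}\,\langle v,\,\nabla_{x}\textstyle\sum_{k=1}^{m}u_{ijk}\int_{\Omega_{k}}G_{x}(y)\,\d y\rangle
=-\tfrac{(1-\rho^{2})^{(\adimn)/2}(2\pi)^{(\adimn)/2}}{\rho}\,\langle v,\,\overline{\nabla}T_{\rho}\bigl(\textstyle\sum_{k=1}^{m}u_{ijk}1_{\Omega_{k}}\bigr)(x)\rangle,
\]
by the integral formula \eqref{oudef}. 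After multiplying by the constant $(1-\rho^{2})^{-(\adimn)/2}(2\pi)^{-(\adimn)/2}$ in the definition \eqref{sdef2} of $S_{ij}$, I arrive at
\[
S_{ij}(\langle v,N\rangle)(x)=-\tfrac{1}{\rho}\bigl\langle v,\,\overline{\nabla}T_{\rho}\bigl(\textstyle\sum_{k=1}^{m}u_{ijk}1_{\Omega_{k}}\bigr)(x)\bigr\rangle.
\]

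The final step is to invoke \eqref{nabeq3n} of Lemma \ref{lemma7rn}, which identifies $\overline{\nabla}T_{\rho}\bigl(\sum_{k}u_{ijk}1_{\Omega_{k}}\bigr)(x)$ at $x\in\Sigma_{ij}'$ with $N_{ij}'(x)$ times its norm, completing the first identity. Repeating the argument with the roles of primed and unprimed sets swapped, and with $u_{ijk}'$ in place of $u_{ijk}$, yields the second identity. I expect the main subtlety to be in the first step: cleanly reducing the sum over reduced boundaries $\partial^{*}\Omega_{k}$ to a bulk integral over $\Omega_{k}$. This requires Lemma \ref{reglem} so that each $\partial\Omega_{k}$ is a locally finite union of $C^{\infty}$ manifolds (with the difference between $\partial\Omega_{k}$ and $\partial^{*}\Omega_{k}$ of Hausdorff dimension at most $\sdimn-1$, hence negligible) and the Gaussian decay of $G_{x}$ in $y$ to kill contributions from infinity after truncation.
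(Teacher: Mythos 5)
Your proposal is correct and mirrors the paper's proof, just traversed in the opposite direction: the paper begins from $\langle v, \overline{\nabla}T_\rho(\sum_k u_{ijk}1_{\Omega_k})(x)\rangle$ and expands it via the gradient identity $\nabla_x G_x(y) = -\rho\,\nabla_y G_x(y)$ and the divergence theorem (justified in Remark \ref{drk}) to reach $-\rho\, S_{ij}(\langle v,N\rangle)(x)$, then invokes \eqref{nabeq3n}, whereas you run the identical chain starting from $S_{ij}$ and ending at $\langle v,\overline{\nabla}T_\rho(\cdot)\rangle$ before applying \eqref{nabeq3n}. The only cosmetic quibble is that the intermediate expression $\langle v,\nabla_y^{\mathrm{tot}}\int_{\Omega_k}G_x(y)\,\d y\rangle$ is ill-typed since the $y$-variable has already been integrated out; the intended quantity is $\int_{\Omega_k}\langle v,\nabla_y G_x(y)\rangle\,\d y$, which your subsequent steps handle correctly.
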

\begin{proof}
From Lemma \ref{lemma7rn}, i.e. \eqref{nabeq3n},
\begin{equation}\label{firstve2}
\overline{\nabla} T_{\rho}\Big(\sum_{k=1}^{m}u_{ijk}1_{\Omega_{k}}\Big)(x)=N_{ij}'(x)\Big\|\overline{\nabla} T_{\rho}\Big(\sum_{k=1}^{m}u_{ijk}1_{\Omega_{k}}\Big)(x)\Big\|,\qquad\forall\,x\in\Sigma_{ij}'.
\end{equation}
From Definition \ref{oudef}, and then using the divergence theorem, $\forall$ $x\in\Sigma_{ij}$,
\begin{equation}\label{gre2}
\begin{aligned}
&\Big\langle v,\overline{\nabla} T_{\rho}\Big(\sum_{k=1}^{m}u_{ijk}1_{\Omega_{k}}\Big)(x)\Big\rangle\\
&\qquad=(1-\rho^{2})^{-(\adimn)/2}(2\pi)^{-(\adimn)/2}\Big\langle v,\sum_{k=1}^{m}u_{ijk}\int_{\Omega_{k}} \overline{\nabla}_{x}e^{-\frac{\vnorm{y-\rho x}^{2}}{2(1-\rho^{2})}}\,\d y\Big\rangle\\
&\qquad=(1-\rho^{2})^{-(\adimn)/2}(2\pi)^{-(\adimn)/2}\frac{\rho}{1-\rho^{2}}\sum_{k=1}^{m}u_{ijk}\int_{\Omega_{k}} \langle v,\,y-\rho x\rangle e^{-\frac{\vnorm{y-\rho x}^{2}}{2(1-\rho^{2})}}\,\d y\\
&\qquad=-(1-\rho^{2})^{-(\adimn)/2}(2\pi)^{-(\adimn)/2})\rho\sum_{k=1}^{m}u_{ijk}\int_{\Omega_{k}} \mathrm{div}_{y}\Big(ve^{-\frac{\vnorm{y-\rho x}^{2}}{2(1-\rho^{2})}}\Big)\,\d y\\
&\qquad=-(1-\rho^{2})^{-(\adimn)/2}(2\pi)^{-(\adimn)/2}\rho\sum_{k=1}^{m}u_{ijk}\int_{\redb\Omega_{k}}\langle v,N(y)\rangle e^{-\frac{\vnorm{y-\rho x}^{2}}{2(1-\rho^{2})}}\,\d y.
\end{aligned}
\end{equation}
The use of the divergence theorem is justified in Remark \ref{drk}.  Therefore,
\begin{flalign*}
&\langle v,N_{ij}'(x)\rangle\Big\|\overline{\nabla} T_{\rho}\Big(\sum_{k=1}^{m}u_{ijk}1_{\Omega_{k}}\Big)(x)\Big\|
\stackrel{\eqref{firstve2}}{=}\Big\langle v,\overline{\nabla} T_{\rho}\Big(\sum_{k=1}^{m}u_{ijk}1_{\Omega_{k}}\Big)(x)\Big\rangle\\
&\qquad\stackrel{\eqref{gre2}}{=}-(1-\rho^{2})^{-(\adimn)/2}(2\pi)^{-(\adimn)/2}\rho\sum_{k=1}^{m}u_{ijk}\int_{\redb\Omega_{k}}\langle v,N(y)\rangle e^{-\frac{\vnorm{y-\rho x}^{2}}{2(1-\rho^{2})}}\,\d y\\
&\qquad\stackrel{\eqref{sdef2}}{=}-\rho\, S_{ij}(\langle v,N\rangle)(x), \qquad \forall\,x\in\Sigma_{ij}.
\end{flalign*}
\end{proof}

\begin{remark}\label{drk}
To justify the use of the divergence theorem in \eqref{gre2}, let $r>0$ and note that we can differentiate under the integral sign of  $T_{\rho}1_{\Omega\cap B(0,r)}(x)$ to get
\begin{equation}\label{grep}
\begin{aligned}
\overline{\nabla} T_{\rho}1_{\Omega\cap B(0,r)}(x)
&=(1-\rho^{2})^{-(\adimn)/2}(2\pi)^{-(\adimn)/2}\Big\langle v,\int_{\Omega\cap B(0,r)} \overline{\nabla}_{x}e^{-\frac{\vnorm{y-\rho x}^{2}}{2(1-\rho^{2})}}\,\d y\Big\rangle\\
&=(1-\rho^{2})^{-(\adimn)/2}(2\pi)^{-(\adimn)/2}\frac{\rho}{1-\rho^{2}}\int_{\Omega\cap B(0,r)} \langle v,\,y-\rho x\rangle e^{-\frac{\vnorm{y-\rho x}^{2}}{2(1-\rho^{2})}}\,\d y\\
&=-(1-\rho^{2})^{-(\adimn)/2}(2\pi)^{-(\adimn)/2}\rho\int_{\Omega\cap B(0,r)} \mathrm{div}_{y}\Big(ve^{-\frac{\vnorm{y-\rho x}^{2}}{2(1-\rho^{2})}}\Big)\,\d y\\
&=-(1-\rho^{2})^{-(\adimn)/2}(2\pi)^{-(\adimn)/2}\rho\int_{(\Sigma\cap B(0,r))\cup(\Omega\cap\partial B(0,r))}\langle v,N(y)\rangle e^{-\frac{\vnorm{y-\rho x}^{2}}{2(1-\rho^{2})}}\,\d y.
\end{aligned}
\end{equation}
Fix $r'>0$.  Fix $x\in\R^{\adimn}$ with $\vnorm{x}<r'$.  The last integral in \eqref{grep} over $\Omega\cap \partial B(0,r)$ goes to zero as $r\to\infty$ uniformly over all such $\vnorm{x}<r'$.  Also
$\overline{\nabla} T_{\rho}1_{\Omega}(x)$
exists a priori for all $x\in\R^{\adimn}$, while
\begin{flalign*}
&\vnorm{\overline{\nabla} T_{\rho}1_{\Omega}(x)-\overline{\nabla} T_{\rho}1_{\Omega\cap B(0,r)}(x)}
\stackrel{\eqref{oudef}}{=}\frac{\rho}{\sqrt{1-\rho^{2}}}\vnorm{\int_{\R^{\adimn}} y 1_{\Omega\cap B(0,r)^{c}}(x\rho+y\sqrt{1-\rho^{2}})\gamma_{\adimn}(y)\,\d y}\\
&\qquad\qquad\qquad
\leq\frac{\rho}{\sqrt{1-\rho^{2}}}\sup_{w\in\R^{\adimn}\colon\vnorm{w}=1}\int_{\R^{\adimn}} \abs{\langle w,y\rangle} 1_{B(0,r)^{c}}(x\rho+y\sqrt{1-\rho^{2}})\gamma_{\adimn}(y)\,\d y.
\end{flalign*}
And the last integral goes to zero as $r\to\infty$, uniformly over all $\vnorm{x}<r'$.
\end{remark}

\begin{lemma}[\embolden{Second Variation of Translations}]\label{keylem}
Let $v\in\R^{\adimn}$.  Let $\Omega_{1},\ldots,\Omega_{m},$ $\Omega_{1}',\ldots,\Omega_{m}'$ minimize Problem \ref{prob2}.  Assume that $\partial\Omega_{i},\partial\Omega_{i}'$ are locally finite unions of $C^{\infty}$ manifolds for all $1\leq i\leq m$.  For each $1\leq i\leq m$, let $\{\Omega_{i}^{(s)}\}_{s\in(-1,1)}$ be the variation of $\Omega_{i}$ corresponding to the constant vector field $X\colonequals v$ and let $\{\Omega^{'(s)}_{i}\}_{s\in(-1,1)}$ be the variation of $\Omega_{i}$ corresponding to $X'\colonequals v$ for all $1\leq i\leq m$  (When $\rho<0$, we choose $X'\colonequals -v$.)
Then
\begin{flalign*}
&\frac{1}{2}\frac{\d^{2}}{\d s^{2}}\Big|_{s=0}\sum_{i,j=1}^{m}d_{ij}\Big(\int_{\R^{\adimn}} \int_{\R^{\adimn}} 1_{\Omega_{i}^{(s)}}(y)G(x,y) 1_{\Omega_{j}^{'(s)}}(x)\,\d x\d y - z_{ij}\Big)^{2}\\
&\qquad\qquad\qquad=\Big(-\frac{1}{\abs{\rho}}+1\Big)\sum_{1\leq i<j\leq m}\int_{\Sigma_{ij}'}\abs{\langle v,N_{ij}'(x)\rangle}^{2}\Big\|\overline{\nabla} T_{\rho}\Big(\sum_{k=1}^{m}u_{ijk}1_{\Omega_{k}}\Big)(x)\Big\| \gamma_{\adimn}(x)\,\d x\\
&\qquad\qquad\qquad\qquad\qquad\qquad\qquad\quad\,+\int_{\Sigma_{ij}}\abs{\langle v,N_{ij}(x)\rangle}^{2}\Big\|\overline{\nabla} T_{\rho}\Big(\sum_{k=1}^{m}u_{ijk}'1_{\Omega_{k}'}\Big)(x)\Big\| \gamma_{\adimn}(x)\,\d x\\
&+\!\sum_{i,j=1}^{m}d_{ij}\Big[\int_{\redb\Omega_{i}} T_{\rho}1_{\Omega_{j}'}(x)\langle v,N(x)\rangle\gamma_{\adimn}(x)\,\d x
+\mathrm{sign}(\rho)\int_{\redb\Omega_{j}'} T_{\rho}1_{\Omega_{i}}(x)\langle v,N'(x)\rangle\gamma_{\adimn}(x)\,\d x\Big]^{2}.
\end{flalign*}
\end{lemma}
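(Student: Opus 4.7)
The plan is to invoke the general second variation formula \eqref{four32pv2n} from Lemma \ref{lemma7rn} with the prescribed boundary vector fields $X(x) \colonequals v$ for $x\in\cup_{i=1}^{m}\partial\Omega_{i}$ and $X'(x)\colonequals \mathrm{sign}(\rho)\,v$ for $x\in\cup_{i=1}^{m}\partial\Omega_{i}'$, extended off these boundaries so that $\mathrm{div}(X)(x)=\langle X(x),x\rangle$ (equivalently, $\gamma_{\adimn}X$ is divergence-free) in a neighborhood of each $\partial\Omega_{i}$, and similarly for $X'$. Because this condition is imposed only in a neighborhood of a lower-dimensional set with full freedom away from that set, such extensions exist. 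The twist $X'=\mathrm{sign}(\rho)v$ rather than $X'=v$ is the critical design choice: it converts the $-1/\rho$ eigenvalue supplied by Lemma \ref{treig2} into the definite-sign factor $-1/\abs{\rho}$, producing the negative coefficient $-1/\abs{\rho}+1$ that is the whole point of the lemma.

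With these choices, $f_{ij}(x)=\langle v,N_{ij}(x)\rangle$ on $\Sigma_{ij}$ and $f_{ij}'(x)=\mathrm{sign}(\rho)\langle v,N_{ij}'(x)\rangle$ on $\Sigma_{ij}'$. Since the sign is squared away in $\abs{f_{ij}'}^{2}=\abs{\langle v,N_{ij}'\rangle}^{2}$, the third and fourth integrals in \eqref{four32pv2n} contribute exactly the ``$+1$'' piece of the stated coefficient. For the last (squared-first-derivative) term of \eqref{four32pv2n}, substituting these $f_{ij},f_{ij}'$ and factoring $\mathrm{sign}(\rho)$ out of the second summand inside the square bracket yields precisely the $[\cdots+\mathrm{sign}(\rho)\cdots]^{2}$ expression asserted in the conclusion.

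The main step is the reduction of the first two integrals in \eqref{four32pv2n}. Factoring $G(x,y)=(1-\rho^{2})^{-\adimn/2}(2\pi)^{-\adimn/2}\gamma_{\adimn}(x)e^{-\vnorm{y-\rho x}^{2}/(2(1-\rho^{2}))}$ as in the third form of \eqref{gdef} to extract a $\gamma_{\adimn}(x)$ factor, the definition \eqref{sdef2} rewrites the first integral as $\int_{\Sigma_{ij}'}\gamma_{\adimn}(x)\,S_{ij}(\langle v,N\rangle)(x)\,f_{ij}'(x)\,\d x$. Substituting the identity $S_{ij}(\langle v,N\rangle)(x)=-\rho^{-1}\langle v,N_{ij}'(x)\rangle\vnorm{\overline{\nabla}T_{\rho}(\sum_{k}u_{ijk}1_{\Omega_{k}})(x)}$ from Lemma \ref{treig2}, combined with $f_{ij}'=\mathrm{sign}(\rho)\langle v,N_{ij}'\rangle$, produces the net factor $-\mathrm{sign}(\rho)/\rho=-1/\abs{\rho}$ multiplying $\int_{\Sigma_{ij}'}\abs{\langle v,N_{ij}'(x)\rangle}^{2}\vnorm{\overline{\nabla}T_{\rho}(\sum_{k}u_{ijk}1_{\Omega_{k}})(x)}\gamma_{\adimn}(x)\,\d x$. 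The second integral in \eqref{four32pv2n} is handled identically via $S_{ij}'$ and the companion identity in Lemma \ref{treig2}. Summing these $-1/\abs{\rho}$ contributions with the ``$+1$'' contributions from the third and fourth integrals yields the coefficient $-1/\abs{\rho}+1$ and completes the proof.

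There is no substantive obstacle once Lemmas \ref{lemma7rn} and \ref{treig2} are in hand; the argument is essentially a careful substitution exercise, with all analytic work (existence of the divergence-corrected extension, justification of the divergence theorem via Remark \ref{drk}, positivity of the gradient norms on the appropriate $\Sigma_{ij},\Sigma_{ij}'$) already packaged into those two lemmas. The only delicate point is the sign bookkeeping between the regimes $\rho>0$ and $\rho<0$, and this is precisely what motivates the asymmetric choice $X'=\mathrm{sign}(\rho)v$ at the outset.
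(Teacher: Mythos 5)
Your proposal is correct and follows essentially the same route as the paper: invoke Lemma \ref{lemma7rn} with $X=v$, $X'=\mathrm{sign}(\rho)\,v$, rewrite the first two integrals of \eqref{four32pv2n} in terms of $S_{ij},S_{ij}'$ by pulling out the $\gamma_{\adimn}(x)$ factor from $G$, substitute the almost-eigenfunction identities of Lemma \ref{treig2}, and collect the $-1/\abs{\rho}$ and $+1$ contributions. The only differences are cosmetic --- you unify the two sign regimes via $\mathrm{sign}(\rho)$ where the paper writes out the cases $\rho>0$ and $\rho<0$ separately --- and you omit the short a priori finiteness check at the end of the paper's proof (the paper verifies, via the divergence theorem applied to $v\langle v,\overline{\nabla}T_{\rho}1_{\Omega_{i}'}\rangle\gamma_{\adimn}$, that the integrals $\int_{\Sigma_{ij}}\vnormf{\overline{\nabla}T_{\rho}(\sum_{k}u_{ijk}'1_{\Omega_{k}'})}\langle v,N_{ij}\rangle^{2}\gamma_{\adimn}\,\d x$ are finite even though the constant field $v$ is not compactly supported), a point worth at least flagging since Lemma \ref{lemma7rn} is stated for $C_{0}^{\infty}$ fields.
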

\begin{proof}
For any $1\leq i<j\leq m$, let $f_{ij}(x)\colonequals\langle v,N_{ij}(x)\rangle$ for all $x\in\Sigma_{ij}$ and let $f_{ij}'(x)\colonequals\langle v,N_{ij}'(x)\rangle$ for all $x\in\Sigma_{ij}'$.  From Lemma \ref{lemma7rn}, if $\rho>0$, then
\begin{flalign*}
&\frac{1}{2}\frac{\d^{2}}{\d s^{2}}\Big|_{s=0}\sum_{i,j=1}^{m}d_{ij}\Big(\int_{\R^{\adimn}} \int_{\R^{\adimn}} 1_{\Omega_{i}^{(s)}}(y)G(x,y) 1_{\Omega_{j}^{'(s)}}(x)\,\d x\d y - z_{ij}\Big)^{2}\\
&\stackrel{\eqref{sdef2}\wedge\eqref{gdef}}{=}\sum_{1\leq i<j\leq m}\int_{\Sigma_{ij}'}\Big[S_{ij}(\langle v,N\rangle)(x)+\Big\|\overline{\nabla} T_{\rho}\Big(\sum_{k=1}^{m}u_{ijk}1_{\Omega_{k}}\Big)(x)\Big\| f_{ij}'(x)\Big] f_{ij}'(x)\gamma_{\adimn}(x)\,\d x\\
&\qquad\qquad\qquad+\int_{\Sigma_{ij}}\Big[S_{ij}'(\langle v,N'\rangle)(x)+\Big\|\overline{\nabla} T_{\rho}\Big(\sum_{k=1}^{m}u_{ijk}'1_{\Omega_{k}'}\Big)(x)\Big\| f_{ij}(x)\Big] f_{ij}(x) \gamma_{\adimn}(x)\,\d x\\
&\qquad\qquad\qquad +d_{ij}\Big[\int_{\redb\Omega_{i}} T_{\rho}1_{\Omega_{j}'}(x)f_{ij}(x)\gamma_{\adimn}(x)\,\d x
+\int_{\redb\Omega_{j}'} T_{\rho}1_{\Omega_{i}}(x)f_{ij}'(x)\gamma_{\adimn}(x)\,\d x\Big]^{2}.
\end{flalign*}
Applying Lemma \ref{treig2},
$$S_{ij}(\langle v,N\rangle)(x)=-\langle v,N_{ij}'(x)\rangle\frac{1}{\rho}\Big\|\overline{\nabla} T_{\rho}(\sum_{k=1}^{m}u_{ijk}1_{\Omega_{k}})(x)\Big\|,\qquad\forall\,x\in \Sigma_{ij}',$$
$$S_{ij}'(\langle v,N'\rangle)(x)=-\langle v,N_{ij}(x)\rangle\frac{1}{\rho}\Big\|\overline{\nabla} T_{\rho}(\sum_{k=1}^{m}u_{ijk}'1_{\Omega_{k}'})(x)\Big\|,\qquad\forall\,x\in \Sigma_{ij},$$
proving the Lemma in the case $\rho>0$.  In the case $\rho<0$, we instead have $f_{ij}'(x)=\langle -v,N_{ij}'(x)\rangle$ $\forall$ $x\in\Sigma_{ij}'$, so

\begin{flalign*}
&\frac{1}{2}\frac{\d^{2}}{\d s^{2}}\Big|_{s=0}\sum_{i,j=1}^{m}d_{ij}\Big(\int_{\R^{\adimn}} \int_{\R^{\adimn}} 1_{\Omega_{i}^{(s)}}(y)G(x,y) 1_{\Omega_{j}^{'(s)}}(x)\,\d x\d y - z_{ij}\Big)^{2}\\
&=\sum_{1\leq i<j\leq m}\int_{\Sigma_{ij}'}\Big[S_{ij}(\langle v,N\rangle)(x)+\Big\|\overline{\nabla} T_{\rho}\Big(\sum_{k=1}^{m}u_{ijk}1_{\Omega_{k}}\Big)(x)\Big\| f_{ij}'(x)\Big] f_{ij}'(x)\gamma_{\adimn}(x)\,\d x\\
&\qquad\qquad+\int_{\Sigma_{ij}}\Big[S_{ij}'(\langle -v,N'\rangle)(x)+\Big\|\overline{\nabla} T_{\rho}\Big(\sum_{k=1}^{m}u_{ijk}'1_{\Omega_{k}'}\Big)(x)\Big\| f_{ij}(x)\Big] f_{ij}(x) \gamma_{\adimn}(x)\,\d x\\
&\qquad\qquad\qquad +d_{ij}\Big[\int_{\redb\Omega_{i}} T_{\rho}1_{\Omega_{j}'}(x)f_{ij}(x)\gamma_{\adimn}(x)\,\d x
+\int_{\redb\Omega_{j}'} T_{\rho}1_{\Omega_{i}}(x)f_{ij}'(x)\gamma_{\adimn}(x)\,\d x\Big]^{2}.
\end{flalign*}
$$S_{ij}(\langle v,N\rangle)(x)=\langle -v,N_{ij}'(x)\rangle\frac{1}{\rho}\Big\|\overline{\nabla} T_{\rho}\Big(\sum_{k=1}^{m}u_{ijk}1_{\Omega_{k}}\Big)(x)\Big\|,\qquad\forall\,x\in \Sigma_{ij}',$$
$$S_{ij}'(\langle -v,N'\rangle)(x)=\langle v,N_{ij}(x)\rangle\frac{1}{\rho}\Big\|\overline{\nabla} T_{\rho}\Big(\sum_{k=1}^{m}u_{ijk}'1_{\Omega_{k}'}\Big)(x)\Big\|,\qquad\forall\,x\in \Sigma_{ij},$$
The Lemma then follows for $\rho<0$.

Note also that $\sum_{1\leq i<j\leq m}\int_{\Sigma_{ij}}\vnormf{\overline{\nabla}T_{\rho}(\sum_{k=1}^{m}u_{ijk}'1_{\Omega_{k}'})(x)}\langle v,N_{ij}(x)\rangle^{2}\gamma_{\adimn}(x)\,\d x$ is finite a priori by the divergence theorem since $\forall$ $1\leq i\leq m$,
\begin{flalign*}
\infty&>\abs{\int_{\Omega_{i}}\Big\langle v,-x+\nabla\langle v,\overline{\nabla}T_{\rho}1_{\Omega_{i}'}(x)\rangle\Big\rangle\gamma_{\adimn}(x)\,\d x}
=\abs{\int_{\Omega_{i}}\mathrm{div}\Big(v\langle v,\overline{\nabla}T_{\rho}1_{\Omega_{i}'}(x)\rangle\gamma_{\adimn}(x)\Big)\,\d x}\\
&=\abs{\int_{\Omega_{i}}\mathrm{div}\Big(v\langle v,\overline{\nabla}T_{\rho}1_{\Omega_{i}'}(x)\rangle\gamma_{\adimn}(x)\Big)\,\d x}
=\abs{\int_{\redb\Omega_{i}}\langle v,\overline{\nabla}T_{\rho}(1_{\Omega_{i}'})(x)\rangle\langle v,N(x)\rangle\gamma_{\adimn}(x)\,\d x}.
\end{flalign*}
Summing over $1\leq i,j,k\leq m$ then gives
\begin{flalign*}
\infty>&\abs{\sum_{1\leq i<j\leq m}\int_{\Sigma_{ij}}\Big\langle v,\overline{\nabla}T_{\rho}\Big(\sum_{k=1}^{m}u_{ijk}'1_{\Omega_{k}'}\Big)(x)\Big\rangle\langle v,N_{ij}(x)\rangle\gamma_{\adimn}(x)\,\d x.}\\
&\stackrel{\eqref{nabeq3n}}{=}\sum_{1\leq i<j\leq m}\int_{\Sigma_{ij}}\Big\|\overline{\nabla}T_{\rho}\Big(\sum_{k=1}^{m}u_{ijk}'1_{\Omega_{k}'}\Big)(x)\Big\|\langle v,N_{ij}(x)\rangle^{2}\gamma_{\adimn}(x)\,\d x.
\end{flalign*}
\end{proof}

\section{Proof of the Main Dimension Reduction Theorem}

\begin{proof}[Proof of Theorem \ref{mainthm1n}]
Let $m\geq2$.  Let $-1<\rho<1$ with $\rho\neq0$.    Let $\Omega_{1},\ldots\Omega_{m},\Omega_{1}',\ldots\Omega_{m}'\subset\R^{\adimn}$ be two partitions of $\R^{\adimn}$ into measurable sets that minimize Problem \ref{prob2}, with $Z$ chosen as in Lemma \ref{reglem} (i.e. so that $\{s_{ij}-z_{ij}\}_{1\leq i,j\leq m}$ are not all equal.)  It suffices to verify Theorem \ref{mainthm1n} in the complement of the set $\Lambda$ defined in Lemma \ref{reglem}, since the conclusion of Theorem \ref{mainthm1n} is preserved by weak limits.  These sets exist by Lemma \ref{existlemn} and from Lemma \ref{reglem} their boundaries are locally finite unions of $C^{\infty}$ $\sdimn$-dimensional manifolds.  Define $\Sigma_{ij}\colonequals(\redb\Omega_{i})\cap(\redb\Omega_{j})$, $\Sigma_{ij}'\colonequals(\redb\Omega_{i}')\cap(\redb\Omega_{j}')$ for all $1\leq i<j\leq m$.

By the last part of Lemma \ref{lemma7rn}, except for sets $\sigma_{ij},\sigma_{ij}'$ of Hausdorff dimension at most $\sdimn-1$, we have
\begin{equation}\label{nine1}
\Big\|\overline{\nabla} T_{\rho}\Big(\sum_{k=1}^{m}u_{ijk}'1_{\Omega_{k}'}\Big)(x)\Big\|>0\quad\forall\,x\in\Sigma_{ij}\setminus\sigma_{ij},\quad
\Big\|\overline{\nabla} T_{\rho}\Big(\sum_{k=1}^{m}u_{ijk}1_{\Omega_{k}}\Big)(x)\Big\|>0\quad\forall\,x\in\Sigma_{ij}'\setminus\sigma_{ij}'.
\end{equation}

Fix $v\in\R^{\adimn}$, and consider the variation of $\Omega_{1},\ldots,\Omega_{m}$ induced by the constant vector field $X\colonequals v$ and the variation of $\Omega_{1}',\ldots,\Omega_{m}'$ induced by $X'\colonequals\mathrm{sign}(\rho)\cdot v$.  For all $1\leq i<j\leq m$, define $S_{ij},S_{ij}'$ as in \eqref{sdef2}.  Define
\begin{flalign*}
V&\colonequals\Big\{v\in\R^{\adimn}\colon \int_{\redb\Omega_{i}} T_{\rho}1_{\Omega_{j}'}(x)\langle v,N(x)\rangle\gamma_{\adimn}(x)\,\d x\\
&\qquad\qquad\qquad+\mathrm{sign}(\rho)\int_{\redb\Omega_{j}'} T_{\rho}1_{\Omega_{i}}(x)\langle v,N'(x)\rangle\gamma_{\adimn}(x)\,\d x=0,\qquad\forall\,1\leq i,j\leq m\Big\}.
\end{flalign*}
From Lemma \ref{keylem},
\begin{flalign*}
v\in V\,\Longrightarrow&\,\,\,\frac{1}{2}\frac{\d^{2}}{\d s^{2}}\Big|_{s=0}\sum_{i,j=1}^{m}d_{ij}\Big(\int_{\R^{\adimn}} \int_{\R^{\adimn}} 1_{\Omega_{i}^{(s)}}(y)G(x,y) 1_{\Omega_{j}^{'(s)}}(x)\,\d x\d y - z_{ij}\Big)^{2}\\
&\qquad=\Big(-\frac{1}{\abs{\rho}}+1\Big)\sum_{1\leq i<j\leq m}\int_{\Sigma_{ij}'}\abs{\langle v,N_{ij}'(x)\rangle}^{2}\Big\|\overline{\nabla} T_{\rho}\Big(\sum_{k=1}^{m}u_{ijk}1_{\Omega_{k}}\Big)(x)\Big\| \gamma_{\adimn}(x)\,\d x\\
&\qquad\qquad\qquad\qquad\qquad\quad\,+\int_{\Sigma_{ij}}\abs{\langle v,N_{ij}(x)\rangle}^{2}\Big\|\overline{\nabla} T_{\rho}\Big(\sum_{k=1}^{m}u_{ijk}'1_{\Omega_{k}'}\Big)(x)\Big\| \gamma_{\adimn}(x)\,\d x.
\end{flalign*}
This second derivative must be zero, since $\Omega_{1},\ldots,\Omega_{m},\Omega_{1}',\ldots,\Omega_{m}'$ minimize Problem \ref{prob2}.  Since $0<\abs{\rho}<1$, \eqref{nine1} implies
\begin{equation}\label{nine2}
v\in V\,\Longrightarrow\,\langle v,N_{ij}(x)\rangle=0,\quad\forall\,x\in\Sigma_{ij},\quad
\langle v,N_{ij}'(x)\rangle=0,\quad\forall\,x\in\Sigma_{ij}',\,\forall\,1\leq i<j\leq m.
\end{equation}
The set $V$ has dimension at least $\sdimn+2-m^{2}$, by the rank-nullity theorem, since $V$ is the null space of the linear operator $M\colon \R^{\adimn}\to\R^{m^{2}}$ defined by
\begin{flalign*}
(M(v))_{ij}&\colonequals \int_{\redb\Omega_{i}} T_{\rho}1_{\Omega_{j}'}(x)\langle v,N(x)\rangle\gamma_{\adimn}(x)\,\d x\\
&\qquad\qquad\qquad+\mathrm{sign}(\rho)\int_{\redb\Omega_{j}'} T_{\rho}1_{\Omega_{i}}(x)\langle v,N'(x)\rangle\gamma_{\adimn}(x)\,\d x,\qquad\forall\,1\leq i,j\leq m
\end{flalign*} 
and $M$ has rank at most $m^{2}-1$ (since $\sum_{i,j=1}^{m}(M(v))_{ij}=0$ for all $v\in\R^{\adimn}$).  So, by \eqref{nine2}, after rotating $\Omega_{1},\ldots,\Omega_{m},\Omega_{1}',\ldots,\Omega_{m}'$, we conclude that there exist measurable $\Theta_{1},\ldots\Theta_{m},\Theta_{1}',\ldots,\Theta_{m}'\subset\R^{m^{2}-1}$ such that
$$\Omega_{i}=\Theta_{i}\times\R^{\sdimn+2-m^{2}},\quad \Omega_{i}'=\Theta_{i}'\times\R^{\sdimn+2-m^{2}}\qquad\forall\,1\leq i\leq m.$$
\end{proof}

\medskip
\noindent\textbf{Acknowledgement}.  Thanks to Pritish Kamath for helpful discussions, including pointing out that the set of probability distribution matrices from Gaussian sources is not convex.  Thanks for Larry Goldstein for helpful discussions, particularly on the abstract and introduction.

\bibliographystyle{amsalpha}
\bibliography{nonint}

\end{document}